\crefname{enumi}{}{}
\crefname{property}{property}{properties}
\crefname{LEM}{Lemma}{the Lemmas}
\crefname{THM}{Theorem}{Theorems}
\renewcommand{\PrintDOI}[1]{\doi{#1}}
\newtheorem{THM}{Theorem}[section]
\newtheorem{LEM}[THM]{Lemma}
\newtheorem{COR}[THM]{Corollary}
\newtheorem{PROP}[THM]{Proposition}
\newtheorem{OBS}[THM]{Observation}
\theoremstyle{definition}
\newtheorem{EX}[THM]{Example}
\newcommand{\abs}[1]{\lvert#1\rvert}
\newcommand{\menge}[1]{\left\{#1\right\}}
\newcommand{\braces}[1]{\left(#1\right)}
\newcommand{\family}[1]{(\,#1\,)}
\renewcommand{\phi}{\varphi}
\newcommand{\Nbb}{\mathbb{N}}
\newcommand{\N}{\mathbb{N}}
\newcommand{\join}{\lor}
\newcommand{\meet}{\land}
\newcommand{\sub}{\subseteq}
\newcommand{\sm}{\smallsetminus}
\newcommand{\cP}{\mathcal{P}}
\newcommand{\cA}{\mathcal{A}}
\newcommand{\cB}{\mathcal{B}}
\newcommand{\cC}{\mathcal{C}}
\newcommand{\cY}{\mathcal{Y}}
\newcommand{\cK}{\mathcal{K}}
\newcommand{\cN}{\mathcal{N}}
\newcommand{\cU}{\mathcal{U}}
\newcommand{\cV}{\mathcal{V}}
\newcommand{\cX}{\mathcal{X}}
\def\invlim{\varprojlim\,}
\def\restricts{\!\restriction\!}
\def\proj{\restricts}
\title{Trees of tangles in infinite separation~systems}
\author{Christian Elbracht \and Jakob Kneip \and Maximilian Teegen}
\begin{document}
\maketitle
\begin{abstract}
We present infinite analogues of our splinter lemma from \cite{FiniteSplinters}.
From these we derive several tree-of-tangles-type theorems for infinite graphs and infinite abstract separation systems.
\end{abstract}

\section{Introduction}
One of the key concepts in graph minor theory is that of a tree-decomposition of a graph.
These allow us to analyse a large graph by dividing it into smaller subgraphs and, working with these, gain insight into the graph as a whole. 
Tangles, ever since their introduction by Robertson and Seymour~\cite{GMX}, have served as a dual object to tree-decompositions of low width in graphs. They have also been studied in their own right, as a way to express, indirectly, highly cohesive substructures such as $k$-blocks or large grid minors.

Over the years, this theory has been expanded and generalised; the branch of abstract tangle theory emerged.
There, the focus shifted from the study of graph minors to tangles as an abstract combinatorial object on an \emph{abstract separation system}, replacing the concrete separations of a graph.
These separation systems are just a poset which only expresses how separations lie in relation to each other, keeping little of the information provided by the underlying graph;
this notion is abstract enough to be applicable to settings other than graphs.
~\cites{AbstractSepSys,TangleTreeAbstract,TangleTreeGraphsMatroids,AbstractTangles,ProfilesNew,FiniteSplinters,ProfileDuality,ToTfromTTD}.

The two central theorems of abstract tangle theory are the \emph{tangle-tree duality theorem} and the \emph{tree-of-tangles theorem}. The former provides a tree-like structure dual to the existence of a tangle, while the latter exposes how tangles can be arranged in a tree structure by way of a nested set of distinguishing separations.
When applied to the vertex separations of a graph, such finite nested sets of separations can easily be converted back into a tree-decomposition in the original sense.

In \cite{FiniteSplinters} we introduced the `splinter lemma', a unified theorem which implies the known tree-of-tangles theorems for finite separations systems.
The merit of this theorem lies in the fact that, while it is strong enough to imply all these results, the proof of the theorem is simple, and its assumptions are easy to check. (See \cref{sec:terminology} for definitions.)

\begin{restatable}[Splinter lemma, \cite{FiniteSplinters}]{THM}{splinterThm}\label{thm:splinter}
    Let $ U $ be a universe of separations and $ (\cA_i)_{i\le n} $ a family of subsets of~$ U.$ If  $ (\cA_i)_{i\le n}$ splinters then we can pick an element $ a_i $ from each $ \cA_i $ so that $ \{a_1,\dots,a_n\} $ is nested.
\end{restatable}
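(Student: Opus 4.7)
The plan is a descent / minimum-counterexample argument on choices of representatives. For any tuple $(a_1,\dots,a_n)\in\cA_1\times\cdots\times\cA_n$, let $N(a_1,\dots,a_n)$ denote the number of unordered pairs $\{i,j\}$ with $a_i$ and $a_j$ crossing (i.e.\ not nested). The goal is to exhibit a tuple with $N=0$. For $n=1$ this is trivial, so from now on I would assume $n\ge 2$.

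I would fix a tuple that is minimal in some well-founded complexity measure refining $N$ — plausibly a lexicographic combination of $N$ with a further ranking on the chosen representatives (e.g.\ to give a canonical notion of a "smallest" crossing replacement). Assume for contradiction $N>0$, and pick a crossing pair $a_i, a_j$. Apply the splintering hypothesis to $(\cA_i,\cA_j)$ and the pair $(a_i,a_j)$: this is designed to yield a corner-like element $a'$ of $\{a_i,a_j\}$ lying in $\cA_i$ or $\cA_j$ that is comparable to (and bounded above by) one of $a_i,a_j$. Replacing the relevant coordinate by $a'$ gives a new tuple in $\cA_1\times\cdots\times\cA_n$, and the $\{i,j\}$-crossing is removed, since $a'$ is nested with both $a_i$ and $a_j$.

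The main obstacle is showing that this substitution strictly decreases the complexity measure, not merely the $\{i,j\}$ contribution to $N$. The danger is that the corner $a'$, while nested with $a_i$ and $a_j$, could cross some $a_k$ that was previously nested with $a_i$; passing from $a_i$ to a "smaller" corner does not a priori preserve nestedness with the other coordinates. This is precisely where the exact formulation of "splinters" must be leveraged: I expect the condition to produce a corner that is small enough to behave monotonically with respect to the other $a_k$, or to require the complexity measure to be crafted (for instance, by a lexicographic ordering on $(N, \text{sum of some sizes of representatives})$) so that any corner-replacement strictly decreases at least one lexicographic coordinate while not increasing any earlier one. Working out this compatibility between the splinter axiom and the chosen measure is the key technical point; once done, minimality is contradicted and we conclude $N=0$, so the tuple $\{a_1,\dots,a_n\}$ is nested as required.
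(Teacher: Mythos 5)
Your strategy diverges from the paper's: the paper proves \cref{thm:splinter} by an induction on~$n$ that first finds a single separation $a_i\in\cA_i$ which is nested with \emph{some} element of every other~$\cA_j$, then restricts each $\cA_j$ to the subset of elements nested with~$a_i$, checks that splintering is preserved under this restriction, and recurses on the remaining $n-1$ sets. Your crossing-count descent is a genuinely different route, and you have correctly identified its weak point yourself, but you have not closed it --- and in fact it cannot be closed with the hypotheses available in \cref{thm:splinter}.

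Here is why the gap is real. When you replace $a_i$ by a corner $c$ of $a_i$ and $a_j$, only the crossings at index~$i$ can change. By the fish lemma (\cref{lem:fish}), any $a_k$ nested with \emph{both} $a_i$ and $a_j$ remains nested with~$c$; but an $a_k$ that is nested with $a_i$ while crossing~$a_j$ is, by \cref{lem:crossnumb_graph}, only guaranteed to be nested with \emph{one} of the two opposite corners $\vr\join\vs$ and $\vr\meet\vs$, and the splinter condition gives you no control over \emph{which} corner lands in $\cA_i\cup\cA_j$. So the substitution can create new crossings at index~$i$, and if enough $a_k$ are in this bad configuration, $N$ strictly increases. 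There is also no secondary quantity on hand to break ties: \cref{thm:splinter} makes no assumption of an order function, so ``sizes of representatives'' are not in the toolbox, and plain splintering only certifies that \emph{some} corner lies in $\cA_i\cup\cA_j$, not that it is extremal for anything. The paper does run a crossing-number descent of precisely the kind you envisage --- but for \cref{thm:thinly}, where \cref{property:strong_submodular} is imposed exactly to \emph{guarantee} that one of the two sets contains a corner with strictly smaller crossing number. \cref{thm:splinter} lacks any such hypothesis, which is why its proof uses the restrict-and-recurse scheme instead of a descent on a global crossing count.
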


\cref{thm:splinter}, in a sense, is yet another step in a series of abstractions in the theory of tangles: rather than working with the tangles themselves, it operates just on the collection of sets of separations distinguishing a given pair of these.

\cref{thm:splinter} is proved by induction: it finds a separation~$ a_i\in\cA_i $ which is nested with some element of every other~$ \cA_j $, and then proceeds inductively on the remaining~$ n{-}1 $ family members, restricted to those separations nested with~$ a_i $. This approach cannot deal with infinite families of sets, however.

In this paper we overcome these difficulties and present two different ways to obtain a version of \cref{thm:splinter} for infinite families of sets of separations. Both these versions are as abstract and therefore as widely applicable, as our original \cref{thm:splinter}; they differ in they way they overcome the difficulties of infinite sets of separations, and therefore in the assumptions required.

The first approach is to ensure that an inductive proof adopted from the finite case converges by imposing a closedness condition in a suitable topology: In \cref{sec:profinite} we present a result for profinite separation systems. These constitute a large class of infinite separation systems, including the systems of vertex separations of an infinite graph. In \cref{sec:profinite_app} we will, as an example, apply this theorem to precisely these graph separations.

However, to deduce the existing tangle-tree theorems in infinite graphs (see \cites{carmesinhalinconj,carmesin2020canonical}) we need a second approach which does not rely one these profinite separation systems, as not every tangle of an infinite graph is one of the corresponding profinite separation system. Following this approach, we obtain our second main theorem, which does imply these existing theorems (\cref{thm:Johannes,thm:jmb} below) and is therefore a truly infinite analogue of \cref{thm:splinter}.
It asks more of the sets~$\cA_i$ than the profinite version; in return the set of separations we obtain will be canonical, i.e., invariant under isomorphisms:

\begin{restatable}{THM}{splinterThinly}\label{thm:thinly}
If  $\family{\cA_i \mid i\in I}$ thinly splinters with respect to some reflexive symmetric relation $\sim$ on $\cA:=\bigcup_{i\in I}\cA_i$, then there is a set $N\subseteq \cA$ which meets every $\cA_i$ and is nested, i.e., $n_1\sim n_2$ for all $n_1,n_2\in N$.
Moreover, this set $N$ can be chosen invariant under isomorphisms:  if $ \phi $ is an isomorphism between $ (\cA,\sim)$ and $(\cA',\sim')$, 
then we have $ N(\family{\phi(\cA_i)\mid i\in I})=\phi(N(\family{\cA_i\mid i\in I})) $.
\end{restatable}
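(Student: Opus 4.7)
The plan is to obtain $N$ as a canonical fixed point of an iterative refinement rather than by making element-by-element choices. The canonicity requirement rules out any Zorn-type or transfinite-selection argument: no arbitrary picks may be made. Instead, the construction of $N$ must depend only on the relation $\sim$ and the combinatorial structure of $\family{\cA_i \mid i \in I}$, so that an isomorphism $\phi$ automatically commutes with every step of the construction.

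Concretely, I would define a transfinite descending sequence of subfamilies by setting $\cA_i^{(0)} := \cA_i$, and at successor stages
\[
  \cA_i^{(\alpha+1)} \;:=\; \bigl\{\, a \in \cA_i^{(\alpha)} \bigm| \forall j \in I\;\exists\, a' \in \cA_j^{(\alpha)} \text{ with } a \sim a' \,\bigr\},
\]
with intersections at limits. For cardinality reasons the sequence stabilises at some $\cA_i^*$, and I would let $N := \bigcup_{i \in I} \cA_i^*$. Canonicity is then essentially automatic: a straightforward transfinite induction on $\alpha$ yields $\phi(\cA_i^{(\alpha)}) = (\phi(\cA_i))^{(\alpha)}$, and the fixed-point passage commutes with $\phi$, so that $\phi(N(\family{\cA_i\mid i\in I})) = N(\family{\phi(\cA_i)\mid i\in I})$.

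Two things remain to be extracted from the \emph{thinly splinters} hypothesis: that each $\cA_i^*$ is nonempty, and that $N$ is genuinely nested rather than merely locally compatible. For the first, the ``thin'' part of the hypothesis should function as a compactness condition preventing the descending chain from emptying out, for instance by controlling, for each element $a$, how many of its $\sim$-incompatible counterparts can appear in a given $\cA_j$. For the second, the successor rule above only guarantees that each $a \in \cA_i^*$ has \emph{some} partner in each $\cA_j^*$; promoting this to pairwise nestedness of the entire union $N$ will require either iterating the refinement one further level, or strengthening the successor step to demand $\sim$-compatibility with every element of $\cA_j^{(\alpha)}$ and then showing that this stronger procedure still terminates with nonempty sets.

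I expect this second point to be the main obstacle: bridging ``has some compatible partner'' to ``is nested with the entire selection''. In the finite splinter lemma this gap is closed by induction on the number of families, which is exactly what breaks down in the infinite setting. The precise combinatorial content of \emph{thinly splinters} must therefore be the input that closes this gap canonically, and the delicate part of the proof will be verifying that the hypothesis is strong enough to support the strengthened refinement while still keeping every $\cA_i^*$ nonempty.
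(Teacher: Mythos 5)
Your approach is genuinely different from the paper's, and unfortunately the gap you flag at the end is not a technical detail but the whole substance of the theorem; the proposed fixes do not close it. The fixed-point set $N=\bigcup_i \cA_i^*$ cannot be nested in general: your refinement only ensures that every $a\in\cA_i^*$ has \emph{some} $\sim$-partner in each $\cA_j^*$, but it leaves behind mutually crossing elements, both inside a single $\cA_i^*$ and across different ones. Strengthening the successor step to ``nested with \emph{every} element of $\cA_j^{(\alpha)}$'' kills the construction immediately, since a single $\cA_j$ will typically contain crossing pairs. And ``iterating one further level'' doesn't change the character of the fixed point.

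The deeper structural problem is that your construction makes no use of the order function $\abs{\cdot}\colon I\to\N_0$, whereas all three clauses of \emph{thinly splinters} are phrased in terms of $\abs{i}$ versus $\abs{j}$. The paper's proof stratifies by this order: it builds $N=\bigcup_k N_k$ by ordinary induction on $k\in\N_0$, where $N_k\supseteq N_{k-1}$ meets every $\cA_i$ with $\abs{i}\le k$. At stage $k$ it adds, for each $i$ with $\abs{i}=k$, \emph{all} elements of $\cA_i$ that are nested with $N_{k-1}$ and have minimum $k$-crossing number among such. This min-crossing-number criterion is the canonical selector that replaces your fixed-point passage; it is well defined because \cref{property:fin_cn} makes crossing numbers finite (so the ``thin'' hypothesis is a minimisation enabler, not a compactness condition). \Cref{property:fish} then guarantees that some element of $\cA_i$ \emph{is} nested with $N_{k-1}$ (take one crossing the fewest elements of $N_{k-1}$; a crossing would yield a corner crossing strictly fewer). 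Finally \cref{property:strong_submodular} forces the chosen minimisers at level $k$ to be pairwise nested: a crossing pair $a_i,a_j$ at the same level would produce a corner in $\cA_i$ or $\cA_j$, still nested with $N_{k-1}$ (by the corner property), but with strictly smaller $k$-crossing number, contradicting minimality. This is exactly the mechanism that bridges ``locally compatible'' to ``globally nested,'' and it has no analogue in your refinement scheme. I'd recommend discarding the transfinite-intersection idea and re-deriving the statement along the stratified, crossing-number-minimising lines just described.
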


We prove this statement in \cref{sec:thinly}. Like \cref{thm:splinter}, the statement of this theorem is a bit technical, as we want it to be as widely applicable as possible. We then show the usefulness of this abstract theorem throughout \cref{sec:applications_thinly}, by deducing the existing theorems about distinguishing tangles in infinite graphs from it.

As a simple example, we start with applying it to tangles in locally finite graphs in \cref{sec:application_loc_fin}. This application is straightforward and demonstrates a prototypical application of \cref{thm:thinly}.

It is also possible to apply \cref{thm:thinly} to arbitrary infinite graphs, and we do so in \cref{sec:application_infinite}. This application uses another new, and interesting, shift of perspective: We cannot apply \cref{thm:thinly} directly to the sets of separations efficiently distinguishing two profiles since, in general, these do not splinter thinly. Instead we consider a slightly different set, namely, the set of separators, to which \cref{thm:thinly} does apply:
\begin{restatable}{THM}{thmNestedSeparators}
\label{cor:nested_separators}
Given a set of distinguishable robust regular profiles $\cP$ of a graph $G$ there exists a canonical nested set of separators efficiently distinguishing any pair of profiles in~$\cP$.
\end{restatable}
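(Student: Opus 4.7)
The plan is to apply \cref{thm:thinly} to the family indexed by unordered pairs of distinct profiles in $\cP$. For each such pair $\{P,P'\}$, let $\cA_{\{P,P'\}}$ be the set of all vertex separators that arise as $A\cap B$ for some separation $(A,B)$ of $G$ efficiently distinguishing $P$ from $P'$. The relation $\sim$ on $\cA:=\bigcup\cA_{\{P,P'\}}$ is taken to be: $S\sim S'$ if there exist separations $(A,B)$ with separator $A\cap B=S$ and $(C,D)$ with separator $C\cap D=S'$ such that $\{(A,B),(C,D)\}$ is nested. This relation is clearly reflexive and symmetric, and any $\sim$-clique of separators lifts to a nested set of separations in the usual sense.

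The first key step is to check that $\cA$ can be naturally coordinatised by a set on which $\sim$ makes sense as in \cref{thm:thinly}; this is routine once one observes that each separator carries with it the data of the pair of profiles it distinguishes. The main work, and the main obstacle, is to verify that $\family{\cA_{\{P,P'\}}\mid \{P,P'\}\sub\cP}$ thinly splinters with respect to $\sim$. The classical argument for the finite tree-of-tangles theorem uses submodularity of the order function together with the profile axioms to take two crossing separations that efficiently distinguish different pairs of profiles and replace one of them by a corner separation which still efficiently distinguishes the same pair and is nested with the other. In the infinite setting, the corner separation need not lie in the original distinguishing set as a separation, but its \emph{separator} is a subset of the union of the two original separators and, by submodularity applied with equality, is itself the separator of an efficient distinguisher of the correct pair. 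This is precisely the shift of perspective advertised before the theorem: separators absorb the finitary moves that separations cannot.

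Concretely, to verify thin splintering I would, given finitely many pairs $\{P_j,P_j'\}$ and chosen separators $S_j\in\cA_{\{P_j,P_j'\}}$, fix one pair and show that some separator $S^*\in\cA_{\{P_j,P_j'\}}$ is $\sim$-compatible with all the remaining chosen $S_{j'}$. Starting from a separation $(A,B)$ realising $S_j$, I uncross it with representatives of the other $S_{j'}$ one at a time using submodularity; robustness of the profiles ensures that at each step the relevant corner separation is still an efficient distinguisher of $\{P_j,P_j'\}$, and regularity handles the small/degenerate cases. The resulting separator lies in $\cA_{\{P_j,P_j'\}}$ and is the separator of a separation nested with each of the fixed representatives of the $S_{j'}$, which is exactly the $\sim$-relation required.

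Once thin splintering is established, \cref{thm:thinly} yields a set $N\sub\cA$ that meets every $\cA_{\{P,P'\}}$ and is pairwise $\sim$-related; by construction $N$ is then a nested set of separators efficiently distinguishing every pair of profiles in $\cP$. Canonicity, i.e.\ invariance under isomorphisms of $G$ fixing $\cP$ setwise, is inherited directly from the \emph{moreover} clause of \cref{thm:thinly}, since any such isomorphism $\phi$ induces a bijection of the family $\family{\cA_{\{P,P'\}}}$ compatible with $\sim$.
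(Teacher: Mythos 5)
Your overall strategy—apply \cref{thm:thinly} to the sets of \emph{separators} (rather than separations) indexed by pairs of profiles—is the same as the paper's, and your identification of the right level of abstraction (use the abstract relation $\sim$ and abstract ``corners'' of \cref{sec:thinly}) is correct. But there are two substantive gaps.

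First, and most importantly, you never address \cref{property:fin_cn}, the finite $k$-crossing-number condition. This is not a formality: for graphs with vertices of infinite degree a separator in $\cA_{P,P'}$ could a priori cross infinitely many others. The paper needs a genuine argument here (\cref{lem:tight_meeting,lem:crossing_min_sep,lem:fin_cn}): one shows that the separations in the $\cA_{P,P'}$ are tight (\cref{lem:distinguisher_tight}), deduces that any $Y\in\cA$ crossing $X$ must make $X$ a $\subseteq$-minimal $v$--$w$-separator for some $v,w\in X$, and then invokes Halin's finiteness lemma (\cref{lem:finitelymany_seps}). Without this, \cref{thm:thinly} simply does not apply, so the proposal has no proof of the central hypothesis.

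Second, your description of the verification of thin splintering is not what \cref{thm:thinly} asks for. You describe fixing finitely many chosen separators $S_{j'}$ and uncrossing a given one against them ``one at a time''---but that is essentially the inductive argument \emph{inside} the proof of the finite splinter lemma, not the verification of its hypotheses. Properties~\cref{property:fish} and~\cref{property:strong_submodular} are purely pairwise conditions and must be checked as such. The paper does this by reusing \cref{lem:findcorner1,lem:findcorner2} together with the auxiliary \cref{lem:fish_gsep,lem:submodular}, which show that the separator of a relevant corner separation is a corner of $X$ and $Y$ in the abstract sense and behaves correctly for the $k$-crossing-number count. Relatedly, your relation $\sim$ is defined by existence of nested witness separations, whereas the paper defines $X\sim Y$ directly on separators ($Y$ does not properly separate two vertices of $X$) and proves symmetry on $\cA$ in \cref{lem:nested_sym}; your assertion that ``any $\sim$-clique of separators lifts to a nested set of separations'' is in fact false in general and is exactly the non-trivial step that the paper handles separately, and non-canonically, via \cref{thm:j+a} in \cref{sub:johannes}. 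It is not needed for the present theorem, but asserting it as routine would mislead you in the follow-up.
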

This theorem acts as an intermediary result between the existing results about tangles in arbitrary infinite graphs. On the one hand we can, waiving canonicity, transform the nested set of separators back into a nested set of separations, recovering the following result of Carmesin about distinguishing tangles in infinite graphs by way of a nested set of separations:
\begin{restatable}[\cite{carmesinhalinconj}*{Theorem 5.12}]{THM}{thmJohannes}\label{thm:Johannes} 
For any graph $G$, there is a nested set $N$ of separations that distinguishes efficiently any two robust principal profiles (that are not restrictions of one another).
\end{restatable}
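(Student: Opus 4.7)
The plan is to derive \cref{thm:Johannes} from \cref{cor:nested_separators} by lifting a canonical nested set of separators to a (non-canonical) nested set of separations. First I would apply \cref{cor:nested_separators} to the set $\cP$ consisting of all robust principal profiles of $G$ which are pairwise not restrictions of one another. One needs to check that this $\cP$ meets the hypotheses of \cref{cor:nested_separators}: robust principal profiles are in particular robust regular profiles in the usual sense, and ``not restrictions of one another'' is precisely the distinguishability requirement. This yields a canonical nested set $\cS$ of separators such that, for every pair $P\neq P'$ in $\cP$, some $S\in\cS$ is the separator of an efficient separation distinguishing $P$ and $P'$.

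Next, I would turn $\cS$ into the desired nested set $N$ of separations. For each $S\in\cS$, the separations $(A,B)$ with $A\cap B=S$ and order $|S|$ differ only in how the components of $G-S$ are distributed between $A\sm B$ and $B\sm A$. For any pair $P,P'\in\cP$ that $S$ is used to distinguish, the two profiles themselves prescribe, on each component $C$ of $G-S$, whether $C$ should lie on the $P$-side or the $P'$-side; since distinct profiles agree on ``small'' sides, one can pick a single separation $(A,B)$ at $S$ that simultaneously witnesses distinguishing for every pair that $S$ is called upon to distinguish. Let $N$ consist of one such choice for each $S\in\cS$.

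The remaining task, which is the main obstacle, is to arrange the chosen sides so that $N$ itself is nested. Since $\cS$ is nested as a set of separators, for any two $S,T\in\cS$ there exist separations realising them which are nested; the work is to make the choices at each $S$ simultaneously consistent with every other $T\in\cS$. This can be done because the orientations are globally induced by the profiles in $\cP$, which are themselves pairwise consistent on efficient separations, so that corner conditions of the separation lattice translate to combinatorial statements about how the components of $G-S$ and $G-T$ interact. Canonicity is necessarily lost in this orientation step, which is consistent with the weaker statement of \cref{thm:Johannes}; this is precisely the reason the authors present \cref{cor:nested_separators} as an intermediary result rather than as a direct strengthening.
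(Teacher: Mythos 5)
Your proposal has a genuine gap at the step where you claim ``one can pick a single separation $(A,B)$ at $S$ that simultaneously witnesses distinguishing for every pair that $S$ is called upon to distinguish.'' This is false in general. Consider a separator $X$ with three tight components $C_1,C_2,C_3$ and three principal profiles $P_1,P_2,P_3$ such that $P_i$ points towards $C_i$ (i.e.\ $(V\sm C_i, C_i\cup X)\in P_i$). Any single separation $(A,B)$ with separator $X$ must place $C_1,C_2,C_3$ on its two sides, so at least two of them, say $C_1$ and $C_3$, land on the same side; then $P_1$ and $P_3$ both orient $(A,B)$ towards that side and are \emph{not} distinguished by it, even though $X\in\cA_{P_1,P_3}$. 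The phrase ``distinct profiles agree on small sides'' does not save the argument: the components that neither $P$ nor $P'$ points to are genuinely unconstrained by the pair $(P,P')$, and different pairs impose incompatible demands on where those components should go. So $N$ cannot consist of just one separation per separator.

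The paper's proof instead attaches to each separator $X\in\cN$ (almost) all of the tight-component separations $(V\sm C, C\cup X)$ for tight components $C$ of $G-X$, and the difficulty becomes showing that this larger family can be made nested by omitting at most one tight component per separator. This is exactly the content of the Elm--Kurkofka theorem (\cref{thm:j+a}), which the paper invokes; alternatively, the paper gives a direct transfinite-induction argument that, at each separator $X_\gamma$, carefully groups the non-tight components of $G-X_\gamma$ with the tight component dictated by the already-chosen separations. Your second vague step (arguing nestedness via ``corner conditions translating to combinatorial statements'') also glosses over the actual hard work, but the first issue is fatal on its own: the object you build simply does not distinguish all the pairs it needs to.
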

This theorem is a cornerstone in Carmesin's proof that every infinite graph has a tree-decomposition displaying all its topological ends. For more about the relation between ends and tangles also see \cite{EndsAsTangles,EndsAndTangles}. We deduce \cref{thm:Johannes} from \cref{cor:nested_separators} in \cref{sub:johannes}.

On the other hand, if we want to keep canonicity, we can use \cref{cor:nested_separators} to deduce a result by Carmesin, Hamann and Miraftab \cite{carmesin2020canonical}. They construct a canonical object, which they call a tree of tree-decompositions, to distinguish the tangles:
\begin{restatable}[\cite{carmesin2020canonical}*{Remark 8.3}]{THM}{thmJMB}\label{thm:jmb}
 Let $G$ be a connected graph and $\cP$ a distinguishable set of principal robust profiles in $G$.
 There exists a canonical tree of tree-decompositions with the following properties:
 \begin{enumerate}[label=(\arabic*)]
     \item the tree of tree-decompositions distinguishes $\cP$ efficiently; \label{i:jmb1}
     \item if $t\in V(T)$ has level $k$, then $(T_t,\mathcal{V}_t)$ contains only separations of order $k$; \label{i:jmb2}
\item nodes $t$ at all levels have $|V(T_t)|$ neighbours on the next level and the graphs
    assigned to them are all torsos of $(T_t,\mathcal{V}_t)$. \label{i:jmb3}
 \end{enumerate}
\end{restatable}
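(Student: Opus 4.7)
The plan is to derive \cref{thm:jmb} from \cref{cor:nested_separators} by organising the nested set of separators it supplies into the requested hierarchical tree-decomposition structure. First I would apply \cref{cor:nested_separators} to~$\cP$ to obtain a canonical nested set~$N$ of separators that efficiently distinguishes every two profiles in~$\cP$. Stratify $N$ by order: for each $k\in\N$, let $N_k\sub N$ be those separators in~$N$ of order exactly~$k$.

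From here, the tree of tree-decompositions will be built level by level. At level~$0$, associate to the root~$r$ the graph~$G$ together with a tree-decomposition $(T_r,\mathcal{V}_r)$ of~$G$ obtained from the separators in~$N_0$: a nested set of separators of uniform order yields a tree-decomposition in the standard fashion, with the profiles in~$\cP$ providing canonical choices of sides for the associated separations. Recursively, for each node~$t$ at level~$k$ with associated graph~$H_t$, form $(T_t,\mathcal{V}_t)$ from those separators in~$N_k$ lying inside~$H_t$. The neighbours of~$t$ on level~$k+1$ are then put in bijection with~$V(T_t)$, and each such neighbour receives the corresponding torso of~$(T_t,\mathcal{V}_t)$ as its assigned graph.

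Properties~\ref{i:jmb2} and~\ref{i:jmb3} are immediate from this construction. Property~\ref{i:jmb1} follows from the fact that $N$ itself efficiently distinguishes~$\cP$: any two profiles $P,P'\in\cP$ are distinguished by some separator in~$N$, and if that separator has order~$k$ it enters the tree-decomposition at the appropriate level-$k$ node and separates $P$ from~$P'$ efficiently there. Canonicity of the whole structure is inherited from the canonicity of~$N$ guaranteed by \cref{cor:nested_separators}, together with the observation that all subsequent choices depend only on~$N$, on~$\cP$ and on~$G$, and hence commute with isomorphisms.

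The main technical obstacle is the faithful translation of the canonical nested set of separators into the individual tree-decompositions at each node. Since separators are merely vertex sets, to obtain actual tree-decompositions one must canonically choose the sides of each associated separation in a way that is compatible with the profiles still present in the relevant torso, and then verify that the torsos appearing at level~$k+1$ coincide with the parts of the level-$k$ decomposition. Managing this interplay between the recursive torso construction and the level-wise partition of~$N$ — so that neighbouring levels fit together, and so that the whole structure remains canonical under isomorphism — is where the bulk of the technical machinery of~\cite{carmesin2020canonical} will be invoked.
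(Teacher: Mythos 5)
Your high-level plan matches the paper's: apply \cref{cor:nested_separators}, stratify the resulting nested set of separators by order, and build the tree of tree-decompositions level by level, assigning to each node a tree-decomposition obtained from the separators of the corresponding order lying inside its torso. However, your final paragraph correctly identifies a ``main technical obstacle'' and then declines to address it, deferring to ``the bulk of the technical machinery of \cite{carmesin2020canonical}.'' That machinery is precisely what the paper's proof supplies, and several of its ingredients are genuinely non-trivial and not implied by what you have written. Concretely, the following gaps remain.

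First, it is \emph{not} true that a nested set of separators of uniform order ``yields a tree-decomposition in the standard fashion.'' Nestedness of separators (the relation $\sim$) does not automatically translate into nestedness of the separations of the form $(C\cup X,\,V\sm C)$ that one uses as bags, and the profiles do not ``provide canonical choices of sides'' — the canonical choice is to take one such separation per component $C$ of $G-X$ (omitting the case of a lone component). To show the resulting set of separations is actually nested, the paper proves that the separators from \cref{cor:nested_separators} are in fact \emph{strongly} nested (\cref{lem:strong_nested}) and uses \cref{lem:notnested_strongnested}; your proposal never engages with this. Second, the paper has to pass from $\cN$ to its closure $\cN'$ under taking subsets (and verify this is still strongly nested), because within a torso one may be confronted with restrictions $X\cap Y$ of separators, and the nestedness arguments there rely on these subsets lying in the relevant nested family. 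Third, to invoke \cref{lem:TreesetTD} one must check that every $\omega$-chain $(A_1,B_1)<(A_2,B_2)<\dots$ in the constructed separation set has $\bigcap_i B_i=\emptyset$; this is a real hypothesis that must be verified (and the paper does so with an explicit argument), not something that holds by default. Finally, property~\ref{i:jmb1} does not follow simply ``since $N$ distinguishes $\cP$'': the tree-decompositions live on torsos $G_t$, not on $G$, so one must show that the separator $X$ distinguishing $P,P'$ lands in a unique torso at the appropriate level, that the two components of $G-X$ that the profiles point to survive (in the sense of yielding non-empty, distinct components of $G_t-X$), and that the resulting separation of $G_t$ is induced by $(T_t,\cV_t)$. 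This requires an argument about how the torso operation interacts with the component structure of $G-X$, which your proposal omits. Until these steps are carried out, the proof is incomplete.
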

We deduce \cref{thm:jmb} from \cref{cor:nested_separators} in \cref{sub:totd}.

\Cref{cor:nested_separators} is also an interesting result in its own right: the set of separators that it provides is a natural intermediate object between the non-canonical nested set of separations in \cref{thm:Johannes} and the canonical tree of tree-decompositions in \cref{thm:jmb}.
 
Moreover, proving \cref{thm:Johannes} or \cref{thm:jmb} by first proving \cref{cor:nested_separators} and then deducing them breaks up the proof nicely and is, in total, shorter than the original proofs from \cites{carmesinhalinconj,carmesin2020canonical}. 

\section{Terminology and basic facts} \label{sec:terminology}
All terminology for graphs in this paper is from \cite{DiestelBook16noEE}.
This paper builds extensively on the framework of \cites{FiniteSplinters,AbstractSepSys}.
A brief summary of their definitions follows, see \cites{AbstractSepSys,FiniteSplinters} for more in-depth exposition.

A poset $\vS$ together with an order-reversing involution $^*$ is called a \emph{separation system}, and its elements are \emph{(oriented) separations}. Given an oriented separation $\vs$ we shall denote its \emph{inverse} $\vs^*$ as $\sv$ and the pair $\{\vs,\sv\}$ as $s$. This $s$ is called the \emph{underlying unoriented separation} of $\vs$ and $\vs,\sv$ are its \emph{orientations}. We write $S$ for the set of all underlying unoriented separations of oriented separations in $\vS$. Informally we refer to either just as `separations' when the distinction is immaterial or the intended meaning is clear.

Two unoriented separation $r$ and $s$ are \emph{nested} if $ \vr\le\vs $ for suitable orientations of $ r $ and~$ s $. If $ r $ and $ s $ are not nested we say that they~\emph{cross}. Likewise we call oriented separations $ \vr $ and $ \vs $ \emph{nested} if $ r $ and $ s $ are nested, and \emph{crossing} otherwise.
A \emph{nested set of separations} is one whose elements are pairwise nested.

A separation $\vs$ is \emph{small} if $\vs\le\sv$, in which case we say that $\sv$ is \emph{cosmall}. A separation $\vs$ is \emph{trivial}, if there exists a separation $\vr\in \vS$ such that $r\neq s$ and both, $\vs\le\vr$ and $\vs\le\rv$. Note that every trivial separation is small.
An unoriented separation is said to be \emph{trivial} or \emph{small} if one of its orientations is.
A \emph{tree set} in a separation system $S$ is a nested set of separations which does not contain any trivial elements. It is \emph{regular} if it does not contain any small elements.

A separation system $\vU$ is a universe if there are join and meet operators $\join,\meet$ which turn the poset into a lattice. A map $ f\colon \vU\to\vU' $ between universes $ \vU $ and $ \vU' $ is an {\em homomorphism of universes} if $ f $ commutes with the involutions and the $ \join $ and $ \meet $ operations of $ \vU $ and $ \vU' $, i.e. if $ f(\sv)=f(\vs)^* $, $ f(\vr\join\vs)=f(\vr)\join f(\vs) $, and $ f(\vr\meet\vs)=f(\vr)\meet f(\vs) $ for all $ \vr,\vs\in\vU $. Clearly, if $ f\colon\vU\to\vU' $ is an homomorphism of universes, then $ \vr\le\vs $ for $ \vr,\vs\in\vU $ implies $ f(\vr)\le f(\vs) $ in~$ \vU' $.

An {\em isomorphism of universes} is a bijective homomorphism of universes whose inverse is also a homomorphism. Two universes are {\em isomorphic} if there is an isomorphism between them.

The \emph{corner separations} of two separations $s$ and $t$ are the four separations $\vs\join\vt$, $\vs\join\tv$, $\sv\join\vt$, $\sv\join\tv$, or their underlying unoriented separations. 

An \emph{order-function} on a universe $\vU$ is some function $\abs{\cdot}:\vU\to \Nbb_0$ such that ${\abs{\vs}=\abs{\sv}\eqqcolon\abs{s}}$ for every unoriented separation $s\in U$. Note that, unlike in the finite setting, in the infinite setting we require order-functions to take their values in $\Nbb_0$ rather than in~$\mathbb{R}_{\ge 0}$. In the finite case this does not make a big difference, as there were only finitely many orders of separations, which therefore could be scaled to be `essentially integer-valued'. Since such an argument is not possible in the infinite set-up, we need to require our order-function to be integer-valued in the first place.

An order-function is \emph{submodular} if for all $\vs,\vt\in \vU$ we have \[ \abs{\vs}+\abs{\vt}\ge \abs{\vs\join\vt}+\abs{\vs\meet\vt}\,. \]

An \emph{orientation} of some set $ S $ of unoriented separations is a set $O\subseteq \vS$ containing precisely one of $ \vs $ and $ \sv $ for each~$ s\in S $. An orientation $O$ is said to be \emph{consistent} if there are no $ \rv\le\vs $ with $ r\ne s $ and~$ \vr,\vs\in O $.

If $ O_1 $ and $ O_2 $ are two orientations of some (possibly distinct) sets of separations, a separation $ s $ \emph{distinguishes} $ O_1 $ and $ O_2 $ if $ \vs\in O_1 $ and $ \sv\in O_2 $ for a suitable orientation of~$ s $. If an order function is given, and $ s $ is of lowest possible order among all separations that distinguish $ O_1 $ and $ O_2 $, then $ s $ distinguishes them~\emph{efficiently.}

For a separation system $\vS\subseteq \vU$ inside some universe $\vU$, we say that a consistent orientation $O$ of $S$ is a \emph{profile} if 
it satisfies the profile property:
\[ \forall \,\vr,\vs\in P \colon (\rv\meet\sv)\notin P \tag{P}\label[property]{property:P} \]
A profile is said to be \emph{robust} if additionally:
\[\forall \vs\in P,\vt\in \vU:\text{ if }|\vs\join \vt|<|\vs|\text{ and }|\vs\join\tv|<|\vs|, \text{ then either }\vs\join\vt\in P\text{ or }\vs\join\tv\in P\]

Note that in \cite{ProfilesNew}, Diestel, Hundertmark and Lemanczyk also defined the much more technical but slightly weaker condition of a set of profiles being robust, which only requires the condition of robustness for some separations in $U$. Every set of distinguishable robust profiles is a \emph{robust set of profiles} in the sense of \cite{ProfilesNew}, and on the other hand, the reader familiar with this terminology may replace each appearance of a \emph{set of robust profiles} in this paper by a \emph{robust set of profiles} without changing the proofs.

If $U$ comes with some order function then, for some $k\in \Nbb\cup \{\aleph_0\}$, we write $S_k$ for the set of all separations in $U$ satisfying~$|s|<k$. A \emph{$k$-profile (in U)} is then a profile of $S_k$ and a \emph{profile in $U$} shall mean a $k$-profile in $U$ for some~$ k $. 

Given some universe $U$ and a collection $\cB\coloneqq\family{\cA_i\mid i\in I}$ of subsets $\cA_i$ of $U$, we say that $\cB$ \emph{splinters} if it satisfies, for any $i,j\in I$, the following property:
\[\forall s\in \cA_i, t\in \cA_j\colon s\in \cA_j\text{ or }t\in \cA_i\text{ or there exists a corner of }s\text{ and }t\text{ in }\cA_i\cup \cA_j.\]

One example of a separation system are the (vertex) separations of a graph:
Given some (infinite) graph $G$ let us consider the separations of $G$, these are all pairs $(A,B)$ of vertex sets $A,B$ such that $A\cup B=V(G)$ and there is no edge between $A\sm B$ and $B\sm A$.
For historical reasons and ease of notation the corresponding unoriented separations are represented as $\{A,B\}$ instead of $\{(A,B), (B,A)\}$, contrary to the notation in the abstract set-up.
These form a poset where $(A,B) \le (C,D)$ when $A \subseteq B$ and $C \supseteq D$. The function ${}^*: (A,B) \mapsto (B,A)$ is an order-reversing involution.
If $A\cap B$ is finite, the order of $(A,B)$ is defined as $\abs{A\cap B}$. 
Note that the set $S_{\aleph_0}$ of all separations of finite order is a universe of separations where $(A,B) \join (C,D) = (A\cup B, C\cap D)$ and $(A,B) \meet (C,D) = (A\cap B, C\cup D)$.
A \emph{profile in $G$} shall be a profile of $S_k\subseteq S_{\aleph_0}$ for some $k\in \Nbb\cup \{\aleph_0\}$.

A profile $ P $ in $ G $ is \emph{regular} if it does not contain any cosmall separation of $G$, i.e., it contains no separation of the form $(V(G), X)$. Note that, in graphs, the irregular profiles are not of large interest, since they always point towards either the empty set or a single non-cut-vertex. Formally, we can summarize this statement from \cite{ProfileDuality} as follows:
\begin{LEM}[{\cite{ProfileDuality}}]\label{lem:irregular}
  Let $G = (V,E)$ be a graph and $P$ an irregular profile in $G$ then either
  $G$ is connected and $P = \{(V, \emptyset)\}$
  or $G$ has a non-cutvertex $x \in V$ such that \[
      P = \{(A,B)\in\vS_2 \mid x\in B \text{ and } (A,B)\neq (\{x\},V)\}.
  \]
\end{LEM}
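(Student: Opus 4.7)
The plan is to determine the structure of an irregular profile~$P$ by first extracting a cosmall separation of minimum possible order, and then propagating the rest via consistency and the profile property. Since $P$ is irregular, I fix $(V,X)\in P$ with $|X|$ minimum; this minimum exists because $(V,X)\in\vS_k$ (for whatever $k$ with $P$ a $k$-profile) forces $|X|<k$, making $|X|$ a non-negative integer. A first application of consistency yields the key structural observation: for every $(C,D)\in\vS_k$ with $X\subseteq D$ and $\{C,D\}\ne\{V,X\}$, one has $(C,D)\le(V,X)$, so the opposite orientation $(D,C)$ cannot lie in~$P$ without violating consistency together with $(V,X)\in P$; hence $(C,D)\in P$. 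In short, $P$ contains every separation of order~$<k$ that ``points towards~$X$''.

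The next step is to show $|X|\le 1$. If $|X|\ge 2$, pick distinct $y_1,y_2\in X$ and set $Y_i:=X\setminus\{y_i\}$. By minimality of~$X$, $(V,Y_i)\notin P$, and hence $(Y_i,V)\in P$ for $i=1,2$. The profile property applied to these two elements of~$P$ then forbids
\[
    (V,Y_1)\meet(V,Y_2)=(V,Y_1\cup Y_2)=(V,X)
\]
from lying in~$P$, contradicting the choice of~$X$.

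With $|X|\le 1$ I finish by case analysis. If $X=\emptyset$, the consistency forcing would put both orientations of every separation in $S_k\setminus\{\{V,\emptyset\}\}$ into~$P$, which is impossible unless no such separation exists; this forces $S_k=\{\{V,\emptyset\}\}$, i.e.\ $k=1$ and $G$ is connected, and then $P=\{(V,\emptyset)\}$. If $X=\{x\}$, the forcing instead gives $P\supseteq\{(C,D)\in\vS_k : x\in D,\ (C,D)\ne(\{x\},V)\}$, and for $P$ to be an orientation no separation $\{C,D\}\in S_k$ other than $\{\{x\},V\}$ may have $x\in C\cap D$. This simultaneously rules out $k\ge 3$ (an order-two separation $(V,\{x,z\})$ would be a counterexample) and forces $x$ to be a non-cutvertex of~$G$ (a nontrivial separation of $G-x$ would give, after re-adding~$x$ to both sides, a separation of order~$1$ with $x$ in both components and not equal to~$\{\{x\},V\}$). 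The formula for~$P$ from the lemma statement then follows.

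The main obstacle will be the second case: simultaneously extracting $k=2$ and the non-cutvertex conclusion from the single constraint that~$P$ is a well-defined orientation, and handling the boundary situations (disconnected~$G$, isolated vertex~$x$, \emph{etc.}) so that the displayed set really does describe a valid profile.
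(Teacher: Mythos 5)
Since the paper merely cites \cite{ProfileDuality} for this lemma and does not reproduce a proof, there is no in-paper argument to compare your proposal against; I can only assess it on its own merits.

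Your argument is correct. Fixing a cosmall $(V,X)\in P$ of minimum order, the consistency forcing is valid: for any $(C,D)\in\vS_k$ with $X\sub D$ and $\{C,D\}\neq\{V,X\}$ one has $(C,D)\le(V,X)$, so $(D,C)\in P$ together with $(V,X)\in P$ would violate consistency, forcing $(C,D)\in P$. The profile-property step is also right: with $\vr=(Y_1,V)$ and $\vs=(Y_2,V)$ in $P$ one computes $\rv\meet\sv=(V,Y_1)\meet(V,Y_2)=(V,Y_1\cup Y_2)=(V,X)$, and \cref{property:P} forbids this from lying in $P$, so $|X|\le 1$. The case analysis then delivers both branches of the statement. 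The boundary cases you flag at the end are genuine but benign: the extraction of $k=2$ via the witness $(V,\{x,z\})$ presumes $|V|\ge 2$, but for $V=\{x\}$ the degenerate separation $(\{x\},\{x\})\in\vS_2$ already violates \cref{property:P} with $\vr=\vs$, so no $k$-profile with $k\ge 2$ exists to worry about; and the condition "no order-$1$ separation other than $\{\{x\},V\}$ has $x$ on both sides" is precisely the statement that $G-x$ is connected, which covers both connected $G$ and the isolated-$x$, two-component situation under the usual meaning of non-cutvertex.
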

These irregular profiles are distinguished efficiently from each other and from all other profiles in $G$ by the set of separations \[
    \{\{V(G), \emptyset\}\} \cup \{ \{V(G), \{x\}\} \mid x\in V(G) \text{ and $x$ is not a cutvertex of $G$} \}.
\]
Every separation in this set is nested with all separations of $G$.
Hence, our efforts for applications in graphs will concentrate on regular profiles.

Given some set of vertices $X\subseteq V(G)$, we say that a connected component $C$ of $G-X$ is \emph{tight}, if $N(C)=X$.


For two vertices $x,y\in V(G)$ of a graph $G$, an \emph{$x$--$y$-separator of order $k$} is a vertex set $X\subseteq V(G)\sm \{x,y\}$ of size $k$ such that $x$ and $y$ lie in different components of $G-X$. We shall need the following basic fact about such separators in infinite graphs at various points throughout this paper.
\begin{LEM}[{\cite{halin1991lattices}*{2.4}}]\label{lem:finitelymany_seps}
Let $G$ be a graph, $u,v\in V(G)$ and $k\in \Nbb$. Then there are
only finitely many separators of size at most $k$ separating $u$ and $v$ minimally.
\end{LEM}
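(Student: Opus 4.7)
I would prove this by induction on $k$. For the base case $k=0$, a minimal $u$-$v$-separator of size at most $0$ can only be the empty set, which separates $u$ from $v$ precisely when they lie in different components of $G$; hence there is at most one such separator.

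For the induction step, assume the claim for $k-1$ and suppose, for a contradiction, that the set $\cF$ of minimal $u$-$v$-separators of size at most $k$ in $G$ is infinite. If $u$ and $v$ lie in different components of $G$ then $\emptyset$ is the only separator, and if they are adjacent no separator exists at all; so I may assume $u,v$ lie in the same component and are non-adjacent. By the inductive hypothesis applied to $G$ itself, only finitely many members of $\cF$ have size strictly less than $k$, so infinitely many of them have size exactly $k$.

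The key step is then a pigeonhole argument using a fixed finite $u$-$v$-path $P$ in $G$, which exists since $u,v$ lie in the same component. Every separator in $\cF$ must meet $P$, and being disjoint from $\{u,v\}$ it must meet $P$ in an internal vertex; since $P$ has only finitely many internal vertices, some fixed internal vertex $w$ of $P$ must belong to infinitely many separators $X\in\cF$ of size $k$. I would then argue that for each such $X$ the set $X\setminus\{w\}$ is a minimal $u$-$v$-separator in $G-w$ of size $k-1$: it is a separator because $(G-w)-(X\setminus\{w\})=G-X$, and any strictly smaller separator $Y\subsetneq X\setminus\{w\}$ in $G-w$ would yield a separator $Y\cup\{w\}\subsetneq X$ in $G$, contradicting minimality of $X$. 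Since the map $X\mapsto X\setminus\{w\}$ is injective on separators containing $w$, this produces infinitely many minimal $u$-$v$-separators of size $k-1$ in $G-w$, contradicting the inductive hypothesis applied to the smaller graph $G-w$.

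I do not expect any genuine obstacle; the only temptation to resist is reaching for Menger's theorem, which would bound the number of \emph{minimum} (rather than minimal) separators only. The clean observation that every $u$-$v$-separator meets each $u$-$v$-path in one of its finitely many internal vertices is all that is needed to drive the pigeonhole, and the induction then goes through essentially by bookkeeping.
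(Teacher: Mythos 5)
The paper cites this lemma to Halin (1991, result 2.4) and does not reproduce a proof, so there is no in-paper argument to compare against. Your proof is correct and self-contained: the induction on $k$, the pigeonhole on the internal vertices of a fixed finite $u$--$v$-path, and the passage to $G-w$ all check out, and the verification that $X\setminus\{w\}$ remains an inclusion-minimal $u$--$v$-separator of $G-w$ correctly handles even the edge case $Y=\emptyset$ (where, for $|X|\ge2$, one gets $\{w\}\subsetneq X$ separating $u,v$, contradicting minimality of $X$), while for $k=1$ the map sends everything to $\emptyset$, which is consistent with there being at most one such $X=\{w\}$. Your remark that Menger would only bound \emph{minimum} separators, whereas the lemma is about inclusion-minimal ones, is also exactly the right point of caution.
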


Another basic tool is the so-called `fish lemma':
\begin{LEM}[{\cite{ProfilesNew}*{Lemma~2.1}}]\label{lem:fish}
    Let $ U $ be a universe and $ r,s\in U $ two crossing separations. Every separation $ t $ that is nested with both $ r $ and $ s $ is also nested with all four corner separations of $ r $ and $ s $.
\end{LEM}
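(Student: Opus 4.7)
The plan is to show that, for each of the four corner separations $c$ of $r$ and $s$, a suitable orientation of $t$ is comparable with a suitable orientation of $c$. The argument is a short case analysis exploiting the symmetry between the four corners and the lattice structure of $U$.

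First, I would fix orientations $\vr$ of $r$ and $\vs$ of $s$. The four corners are the unoriented separations underlying $\vr\join\vs$, $\vr\join\sv$, $\rv\join\vs$, and $\rv\join\sv$; this collection is invariant under replacing $\vr$ by $\rv$ or $\vs$ by $\sv$. Since $r$ is nested with $t$, some orientation of $t$ is comparable with some orientation of $r$. After possibly replacing $\vr$ by $\rv$ and/or $\vt$ by $\tv$ (which preserves the set of four unoriented corners), I can assume without loss of generality that $\vt\le\vr$.

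Next, nestedness of $s$ and $t$ gives one of $\vt\le\vs$, $\vt\le\sv$, $\vs\le\vt$, or $\sv\le\vt$. In the latter two, transitivity with $\vt\le\vr$ yields $\vs\le\vr$ or $\sv\le\vr$ (equivalently $\rv\le\vs$), either of which would make $r$ and $s$ nested, contradicting the hypothesis that they cross. So one of the first two holds.

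Finally, in the case $\vt\le\vs$, I check directly that $\vt\le\vr\join\vs$, $\vt\le\vr\join\sv$, and $\vt\le\rv\join\vs$ (each follows from $\vt\le\vr$ or $\vt\le\vs$ combined with a trivial meet–join inequality), while for the remaining corner we have $\vt\le\vr\meet\vs=(\rv\join\sv)^*$, which translates via the involution into $\rv\join\sv\le\tv$; in each subcase $t$ is nested with the corner. The case $\vt\le\sv$ is entirely symmetric. The main bookkeeping challenge will be making the initial symmetry reduction clean enough to avoid spelling out sixteen essentially identical subcases.
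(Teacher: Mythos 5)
The paper states this lemma as a citation of \cite{ProfilesNew}*{Lemma~2.1} and does not reproduce a proof, so there is no in-paper argument to compare against. Your argument is correct: the symmetry reduction to $\vt\le\vr$ is legitimate because the set of four unoriented corners is indeed invariant under swapping $\vr\leftrightarrow\rv$ and $\vs\leftrightarrow\sv$; the cases $\vs\le\vt$ and $\sv\le\vt$ are correctly eliminated by transitivity with $\vt\le\vr$, which would nest $r$ and $s$, a contradiction; and in the surviving cases $\vt\le\vs$ or $\vt\le\sv$ the four comparisons go through exactly as you sketch, the only slightly non-trivial point being $\vt\le\vr\meet\vs=(\rv\join\sv)^*$, which uses that an order-reversing involution on a lattice interchanges joins and meets. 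This is the standard proof of the fish lemma.
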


Additionally we shall use the following more general observation about separations nested with a corner separation:

\begin{LEM}\label{lem:crossnumb_graph}
	Let $ \vr $ and $ \vs $ be two separations. Every separation nested with one of~$ r $ or~$ s $ is also nested with at least one of~$ \vr\meet\vs $ and~$ \vr\join\vs $.
\end{LEM}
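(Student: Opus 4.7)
The plan is a direct case analysis on how the witness of nestedness between $t$ and $r$ (say) is oriented. By definition, $t$ nested with $r$ means that for suitable orientations one has $\vt\le\vr$, $\vt\le\rv$, $\tv\le\vr$, or $\tv\le\rv$. Without loss of generality I would assume $t$ is nested with $r$ and split into two groups of cases according to whether $t$ points ``into'' $\vr$ or ``out of'' $\vr$.

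In the first group of cases, one of $\vt$ or $\tv$ is $\le \vr$. Since $\vr\le \vr\join\vs$ by definition of the join, the same orientation of $t$ is also $\le\vr\join\vs$, which exhibits $t$ as nested with the underlying unoriented separation $r\join s$. In the second group of cases, one of $\vt$ or $\tv$ is $\le \rv$; equivalently, $\vr$ is $\le$ the opposite orientation of $t$. Since $\vr\meet\vs\le\vr$, it follows that $\vr\meet\vs$ is $\le$ that opposite orientation of $t$, which exhibits $t$ as nested with $r\meet s$.

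Thus in every case $t$ is nested with at least one of $\vr\meet\vs$ and $\vr\join\vs$, as required. There is no genuine obstacle here: the only thing to keep track of is the bookkeeping between an unoriented separation and its two orientations, and the symmetry in $r$ versus $s$ (one just has to observe that the argument for $t$ nested with $s$ is identical with $\vr$ and $\vs$ swapped, noting that $r\meet s$ and $r\join s$ are symmetric in $r$ and $s$). The key point which drives the whole argument is the trivial monotonicity $\vr\meet\vs\le\vr\le\vr\join\vs$, which lets every relation of $t$ with $\vr$ be transferred either upward to the join or downward to the meet.
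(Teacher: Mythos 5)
Your proof is correct and takes essentially the same route as the paper's: both rest on the monotonicity $\vr\meet\vs\le\vr\le\vr\join\vs$ and a case split on whether the witnessing orientation of $t$ sits below $\vr$ (giving nestedness with the join) or below $\rv$ (giving nestedness with the meet). The paper is marginally more economical by fixing the orientation $\vt$ so that only two cases arise, but this is a cosmetic difference.
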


\begin{proof}
	Let $ t $ be a separation nested with, say,~$ r $. Then~$ t $ has an orientation~$ \vt $ with either~$ \vt\le\vr $ or~$ \vt\le\rv $. In the first case~$ t $ is nested with~$ \vr\join\vs $ by~$ \vt\le\vr\le(\vr\join\vs) $. In the latter case~$ t $ is nested with~$ \vr\meet\vs $ by~$ \vt\le\rv\le(\vr\meet\vs)^* $.
\end{proof}

The separations that distinguish a given pair of profiles exhibit a lattice-like structure:

\begin{LEM}\label{lem:lattice}
	Let $ \vU $ be a universe with a submodular order function and $ P $ and $ P' $ two profiles in~$ \vU $. If $ \vr,\vs\in P $ distinguish $ P $ and $ P' $ efficiently, then both $ \vr\join\vs $ and $ \vr\meet\vs $ also lie in $ P $ and distinguish $ P $ and $ P' $ efficiently.
\end{LEM}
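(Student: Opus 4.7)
Write $m := |\vr| = |\vs|$ for the efficient distinguishing order; since $P$ is a $k$-profile and $P'$ a $k'$-profile, $m < k, k'$, so every separation of order at most $m$ is oriented by both profiles. Submodularity yields the key inequality $|\vr \join \vs| + |\vr \meet \vs| \le 2m$, and the profile property applied to $\vr, \vs \in P$ and to $\rv, \sv \in P'$ gives $\vr \join \vs \in P$ and $\rv \join \sv \in P'$, provided only that the relevant corner lies in the appropriate $S_k$.

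The crux is to show that $|\vr \join \vs|$ and $|\vr \meet \vs|$ both equal $m$, and that the opposite orientations $\rv \meet \sv \in P'$ and $\vr \meet \vs \in P$ also hold. All four claims follow from one common dichotomy: whenever we wish to rule out that some corner of $r$ and $s$ is oriented the wrong way in $P$ (or in $P'$), the alternative either makes that corner distinguish $P$ from $P'$ at strictly smaller order than $m$, which is forbidden by efficiency, or places it together with $\vr$ or $\vs$ in a pointing-away configuration that violates consistency. For example, if $|\vr \meet \vs| < m$, then $\vr \meet \vs \in S_k \cap S_{k'}$ and the profile property forces $\rv \join \sv \in P'$; in $P$ we then have either $\vr \meet \vs \in P$, contradicting the efficiency of $m$, or $\rv \join \sv \in P$, contradicting consistency of $P$ against $\vr$ via $\rv \le \rv \join \sv$.

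The symmetric argument rules out $|\vr \join \vs| < m$, so by submodularity both orders equal $m$ and both corners are orientable in both profiles. The same dichotomy applied once more forces $\vr \meet \vs \in P$ (otherwise $\rv \join \sv \in P$ clashes with $\vr$ by consistency) and $\rv \meet \sv \in P'$ (otherwise $\vr \join \vs \in P'$ clashes with $\rv$); hence both $\vr \join \vs$ and $\vr \meet \vs$ lie in $P$ and distinguish $P$ from $P'$ at order exactly $m$, that is, efficiently. The main technical obstacle is keeping the bookkeeping straight---which orientation of which corner belongs to which profile---and handling the degenerate comparability cases such as $\vr \le \vs$, in which the corners simply coincide with $\vr$ or $\vs$ and the statement is immediate.
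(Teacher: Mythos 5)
Your proof is correct and follows essentially the same approach as the paper: you use property~(P) together with consistency to argue that a corner of order at most $\abs{\vr}$ would lie in $P$ and distinguish $P$ from $P'$, then invoke efficiency to rule out strictly smaller order and submodularity to pin both corners at order exactly $\abs{\vr}$. The paper compresses the same dichotomy between~(P) and consistency into a single sentence, whereas you spell it out orientation-by-orientation, but the underlying argument is identical.
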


\begin{proof}
	If one of $ \vr\join\vs $ and $ \vr\meet\vs $ has order at most $ \abs{r}=\abs{s} $, then that corner separation lies in $ P $ and distinguishes $ P $ and $ P' $ by their consistency and the~\cref{property:P}. The efficiency of $ r $ and $ s $ now implies that neither of the two considered corner separations can have order strictly lower than $ \abs{r} $. Therefore, by submodularity, both of them have order exactly $ \abs{r} $, which implies the claim.
\end{proof}

Moreover we shall need a way to transition between separations and tree-decompositions in graphs. Such a method already exists in finite graphs \cite{confing}. The ingredients of that proof together with the results of \cite{TreelikeSpaces} are all that is needed to show an analogous result for infinite graphs, which we shall present here.

Let us start by recalling the definition of a tree-decomposition. A \emph{tree-decomposition} of a $G$ is a pair $(T,\mathcal{V})$ of a tree $T$ together with a family $\mathcal{V} = (V_t)_{t \in T}$ of vertex sets $V_t\subseteq V(G)$ such that:
\begin{enumerate}[label={(T\arabic*)}]
    \item $V(G) = \bigcup_{t\in T}V_t$;\label{td:T1}
  \item Given $e\in E[G]$ there exists a $t\in T$ such that $e\subseteq V_t$;\label{td:T2}
  \item Given a path $P$ in $T$ from $t_1$ to $t_3$ and a vertex $t_2\in P$ we have $V_{t_1}\cap V_{t_3}\subseteq V_{t_2}$.\label{td:T3}
 \end{enumerate}
A separation $(A,B)$ is \emph{induced} by a tree-decomposition $(T,\mathcal{V})$ if and only if there exists an edge $tt'\in T$ such that for the components $T_t,T_{t'}$ of $T-tt'$ containing $t$ or $t'$ respectively, we have 
 \[(A,B)=\left(\bigcup_{t''\in T_t}V_{t''},\bigcup_{t''\in T_{t'}}V_{t''}\right)\,.\]

A \emph{chain of order type $\alpha$} or a \emph{$\alpha$-chain}, or a \emph{chain of length $\alpha$}, for some ordinal number $\alpha$ (we shall only use $\alpha\in \N$ or $\alpha=\omega$ or $\alpha=\omega+1$) is a collection $\{\vs_i\mid 0\le i<\alpha\}$ of oriented separations, such that $\vs_i<\vs_j$ whenever $i<j$. A set $N$ of unoriented separations \emph{contains} such a chain, if all the separations in that chain are orientations of separations from $N$.

Kneip and Gollin \cite{TreelikeSpaces} showed the following:
\begin{THM}[\cite{TreelikeSpaces}, Theorem 3.9]\label{lem:JakobPascal}
 Every regular tree set which does not contain a chain of order type $\omega+1$ is isomorphic to the edge tree set of a suitable tree.
\end{THM}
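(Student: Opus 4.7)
The plan is to construct a tree $T$ from $N$ in such a way that the natural bijection $s \mapsto e_s$ between $N$ and $E(T)$ is an isomorphism of separation systems. Morally, vertices of $T$ should correspond to ``stars'' of $N$ at a single point, while ends of $T$ correspond to infinite chains of separations converging to no common ``point''. To make this precise, I would call a consistent orientation $O$ of $N$ a \emph{vertex orientation} if its set of $\le$-maximal elements is nonempty and generates $O$ downwards (every element of $O$ lies below some maximal element), and take $V(T)$ to be the set of vertex orientations of $N$.

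For each $s \in N$ I would then place an edge $e_s$ between the vertex orientations $O^+$, $O^-$ that contain $\vs$ and $\sv$ respectively as $\le$-maximal elements. The first thing to check is existence: for every oriented separation $\vs$ of $N$ there must be a vertex orientation having $\vs$ as a maximal element. Here both hypotheses enter: regularity of $N$ ensures that $\vs$ is not dominated by $\sv$ and that trivial degeneracies cannot occur, while the no-$\omega+1$-chain hypothesis ensures that extending $\vs$ downwards to a consistent orientation via a transfinite recursion terminates at a genuine vertex orientation rather than running on indefinitely into an end of $T$.

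The main obstacle, as I see it, is showing that the resulting graph $T$ is a tree. Acyclicity follows routinely from the nestedness of $N$, since a cycle in $T$ would supply a crossing pair of separations. The subtler point is connectedness: for any two vertex orientations $O_1$, $O_2$ one must produce a finite path of edge-flips from $O_1$ to $O_2$, equivalently, show that the disagreement set $\{r \in N : \vr \in O_1,\ \rv \in O_2\}$ is finite. If this set were infinite, then by a Ramsey-type extraction using nestedness one obtains an $\omega$-chain inside it, and one then uses the maximal-element structure of $O_1$ (or $O_2$) to locate a separation lying strictly above every element of that chain, producing the forbidden $\omega+1$-chain. Once connectedness has been established, the verification that $s \mapsto e_s$ respects the involution (flipping $\vs$ swaps the two sides of $T - e_s$) and the partial order (inclusion of tree-components matches $\le$ on $N$) is essentially bookkeeping. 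The conceptual takeaway is that the no-$\omega+1$-chain hypothesis is precisely the weakening of ``no infinite chain'' needed to keep this construction in the realm of genuine trees, rather than tree-like spaces with limit vertices.
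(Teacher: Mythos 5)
This theorem is imported from Gollin and Kneip \cite{TreelikeSpaces} and is not proved in the present paper, so there is no in-paper argument for me to compare against; I assess your outline on its own merits.

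The construction you propose (vertices as consistent orientations generated by their maximal elements, with an edge $e_s$ joining the orientation having $\vs$ maximal to the one having $\sv$ maximal) is the natural one and should go through, but two of your steps are phrased in a way that obscures where the work really is. First, the consistent orientation $O$ with $\vs$ as a maximal element is not built by a transfinite recursion in which choices get made: nestedness together with consistency forces, for every $t\ne s$, exactly one orientation of $t$, so $O$ is unique and determined all at once. The hypothesis that $N$ contains no $(\omega+1)$-chain is used not to make a recursion terminate but to show that this forced $O$ is in fact generated by its maximal elements: if the elements $\vt\in O$ with $\vt\le\sv$ had no maximal member, they would contain an increasing $\omega$-chain each term of which is strictly below $\sv$, and appending $\sv$ yields a forbidden $(\omega+1)$-chain. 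Regularity and the absence of trivial separations in a tree set then certify that these candidate maxima are genuinely maximal in all of $O$. Second, no Ramsey-type extraction is needed for the disagreement set $D$ between two vertex orientations $O_1,O_2$: $D$ is automatically a chain. For $r,s\in D$ oriented so that $\vr,\vs\in O_1$ and $\rv,\sv\in O_2$, consistency of $O_1$ excludes $\rv\le\vs$ and $\sv\le\vr$, consistency of $O_2$ excludes $\vr\le\sv$ and $\vs\le\rv$, and nestedness then leaves only $\vr\le\vs$ or $\vs\le\vr$. What does require a short argument via regularity and non-triviality is the step you gloss over, namely that all members of the resulting $\omega$-chain in $O_1$ lie below one and the same maximal element of $O_1$; that is where the $(\omega+1)$-contradiction finally materialises.
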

Here the \emph{edge tree set} of a tree $T=(V,E)$ is the set of separation obtained from the set of oriented edges $\vE$ by ordering them in the natural partial order (that is $vw\in \vE$ is smaller than $xy\in \vE$ if and only if the unique path from $v$ to $y$ meets $w$ and $x$).

We shall need the following lemma, whose proof is inspired by \cite{confing}.

\begin{LEM}\label{lem:TreesetTD}
	Let $G=(V,E)$ be an infinite graph and let $N\subseteq S_{\aleph_0}(G)$ be a regular tree set. If we have for any $\omega$-chain $(A_1,B_1)<(A_2,B_2)<\dots$ which is contained in $N$ that $\bigcap_{i\in \N}B_i=\emptyset$, then there exists a tree-decomposition $(T,\mathcal{V})$ of $G$ whose set of induced separations is $\vN$.
	
	Moreover this tree-decomposition can be chosen canonical: if~$ \phi\colon G\to G' $ is an isomorphism of graphs, then the tree-decomposition constructed for~$ \phi(N) $ in~$ G' $ is precisely the image under~$ \phi $ of the tree-decomposition constructed for~$ N $ in~$ G $.
\end{LEM}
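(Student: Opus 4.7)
The plan is to first invoke \cref{lem:JakobPascal} to obtain a tree~$T$ whose edge tree set is isomorphic to~$N$, and then to assign a suitable vertex set $V_t\sub V(G)$ to each $t\in V(T)$. Applying \cref{lem:JakobPascal} requires ruling out chains of order type $\omega+1$ in~$N$: any such chain $(A_1,B_1)<\dots<(A_\omega,B_\omega)$ would have $B_\omega\sub\bigcap_{i\in\N}B_i=\emptyset$ by hypothesis, forcing $(A_\omega,B_\omega)=(V,\emptyset)$, whose inverse $(\emptyset,V)$ is small and thereby contradicts the regularity of~$N$. Identifying edges of~$T$ with their corresponding separations in~$N$, I write $(A_e,B_e)$ for the separation corresponding to $e=tt'$, with $A_e$ denoting the side associated to the $t$-component of $T-e$, and define
\[
  V_t\;:=\;\bigcap_{e\ni t}\,A_e\,.
\]

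The crux of the proof is verifying~\ref{td:T1}: every $v\in V(G)$ lies in some~$V_t$. If not, at every $t\in V(T)$ some incident edge~$e$ has $v\in B_e\sm A_e$. Starting from an arbitrary $t_0$ and iteratively choosing such a ``bad'' edge $e_i=t_{i-1}t_i$, the edge~$e_i$ ceases to be bad at~$t_i$ (since $v$ lies strictly on the $t_i$-side of~$e_i$), so the next choice~$e_{i+1}$ differs from~$e_i$ and the $t_i$ form a genuine ray in~$T$. The oriented edges of this ray correspond to a strict $\omega$-chain $(A_1,B_1)<(A_2,B_2)<\dots$ in~$N$ with $v\in B_i$ for all~$i$, contradicting the hypothesis. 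For~\ref{td:T3}, a key observation is that $v\in V_t$ in fact implies $v\in A_f$ for \emph{every} edge $f$ of~$T$, not just those incident to~$t$: the oriented edges along the tree path from $t$ to~$f$ form a $\le$-chain in the edge tree set, their corresponding $A$-sides in~$N$ are therefore nested, and $A$-membership propagates from a single incident edge to~$f$. Consequently, if $v\in V_t\cap V_{t'}$ then $v\in A_f\cap B_f$ for every edge $f$ on the $t$-$t'$-path in~$T$, while for off-path edges the $t$-side agrees with the $s$-side for any $s$ on that path; hence $v\in V_s$ throughout the path, which is~\ref{td:T3}.

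For~\ref{td:T2}, suppose $uv\in E(G)$ but the non-empty subtrees $T_u:=\{t:u\in V_t\}$ and $T_v:=\{t:v\in V_t\}$ are disjoint. Some edge~$e$ of~$T$ then separates them, putting $u\in A_e$ and $v\in B_e$. The edge~$uv$ of~$G$ forbids $u\in A_e\sm B_e$ together with $v\in B_e\sm A_e$, so without loss of generality $u\in A_e\cap B_e$. Running the ray construction from~\ref{td:T1} inside the $B_e$-component of $T-e$ now produces a sink for~$u$ there---the edge~$e$ cannot act as an escape route, since $u\in A_e$ makes~$e$ non-bad at~$t'$---but this sink is a node of $T_u$, contradicting that $T_u$ sits on the opposite side of~$e$. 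The same restricted ray argument identifies the induced separation of any edge $e=tt'$ of~$T$ with $(A_e,B_e)$: the inclusion $\bigcup_{s\in T_t}V_s\sub A_e$ is immediate, and for $v\in A_e$ a sink of~$v$ is found inside the $t$-component of $T-e$, giving $v\in V_s$ there. Canonicity follows because \cref{lem:JakobPascal} is canonical and each $V_t$ depends only on $N$ and~$T$; an isomorphism $\phi\colon G\to G'$ maps $A_e$ to $A_{\phi(e)}$ and hence $V_t$ to $V_{\phi(t)}$. The principal obstacle throughout is running these ray arguments inside the correct subtree, but each time the relevant side-membership precisely forbids the boundary edge from being bad, confining any putative ray to the intended region.
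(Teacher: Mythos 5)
Your proof is correct and follows essentially the same strategy as the paper: obtain the tree from \cref{lem:JakobPascal}, set $V_t$ to be the intersection of the ``inward'' sides of the separations on edges at~$t$, and verify \cref{td:T1,td:T2,td:T3} together with the induced-separation claim via sink/ray arguments. You in fact supply a small detail the paper leaves implicit, namely the check that $N$ has no chain of order type $\omega+1$ (so that \cref{lem:JakobPascal} applies); your argument that the $(\omega+1)$-th element would have to be $(V,\emptyset)$, contradicting regularity, is correct. Your ordering is slightly different — you prove \cref{td:T3} directly from the observation that $v\in V_t$ forces $v$ onto the $t$-side of every edge of $T$, whereas the paper first establishes the induced-separations claim and then reads off \cref{td:T3} — but both routes go through the same nestedness fact. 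Your treatment of \cref{td:T2} and of the induced separations via rays confined to one side of a cut edge is also sound, again matching the paper's reasoning in substance if not in phrasing.
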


\begin{proof}
	Let $T=(V,E)$ be the tree from~\cref{lem:JakobPascal}. Note that by~\cite{TreelikeSpaces}*{Theorem 3.9(iii)} any isomorphism between the edge tree sets of two distinct trees induces an isomorphism of the underlying trees.
	
	Let $\alpha$ be the isomorphism from the edge tree set of $T$ to~$ N $. Given some node $t\in T$ let us denote as $F_t$ the set of oriented separations
	\[F_t\coloneqq\{\alpha(s,t) \mid (s,t)\in\vE\}\,.\]
	We define the bags of our tree-decomposition as $V_t\coloneqq\bigcap_{(A,B)\in F_t}B$. Let us verify that~$ (T,\cV) $ with~$ \cV=\family{V_t}_{t\in T} $ is the desired tree-decomposition.
	
	For~\cref{td:T1} let~$ v\in V $ be given; we need to find a~$ t\in T $ with~$ v\in V_t $ . If~$ v\in A\cap B $ for some~$ (A,B)\in\vN $ then~$ v\in V_t $ for~$ t $ being either of the two end-vertices of the edge whose image under~$ \alpha $ is~$ (A,B) $. Otherwise~$ v $ induces an orientation~$ O $ of~$ E(T) $ by orienting each edge~$ \menge{x,y} $ of~$ T $ as~$ (x,y) $ if~$ v\in B\sm A $ for~$ (A,B)=\alpha(x,y) $.
	
	Observe that~$ O $ is consistent. If~$ O $ has a sink, that is, if there is a node~$ t $ of~$ T $ all of whose incident edges are oriented inwards by~$ O $, then~$ v\in V_t $ by definition of~$ O $. If~$ O $ does not have a sink then~$ O $ contains an~$ \omega $-chain. This is impossible though, since by definition of~$ O $ we would have~$ v\in\bigcap_{i\in\Nbb}B_i $, where~$ (A_i,B_i) $ is the image under~$ \alpha $ of the~$ i $-th element of that~$ \omega $-chain in~$ O $. Thus~\cref{td:T1} holds.
	
	The proof that~\cref{td:T2} holds can be carried out in much the same way due to the fact that every edge of~$ G $ is included in either~$ A $ or~$ B $ for each~$ (A,B)\in\vN $.
	
	Before we check that~\cref{td:T3} holds, let us show that~$ (T,\cV) $ indeed induces~$ N $. For this we need to show that if~$ (x,y) $ is an oriented edge of~$ T $ then
	\[ \alpha(x,y)=\braces{\bigcup_{z\in T_x}V_z\,,\,\bigcup_{z\in T_y}V_z}\,, \]
	where~$ T_x $ and~$ T_y $ are the components of~$ T-xy $ containing~$ x $ and~$ y $, respectively. So let~$ (x,y)\in\vE $ be given and~$ \alpha(x,y)=(A,B) $. Observe first that~$ A\cap B\sub V_x\cap V_y $ by definition. It thus suffices to show that~$ A\supseteq\bigcup_{z\in T_x}V_z $ and~$ B\supseteq\bigcup_{z\in T_y}V_z $ to establish the desired equality.
	
	To see this consider a vertex~$ v\in V_z $ for some~$ z\in T_x $. Let~$ \ve $ be the first edge of the unique~$ z $--$ x $-path in~$ T $ and let~$ \alpha(\ve)=(A',B') $. We have~$ \ve\le(x,y) $ by definition of an edge tree set, and hence~$ (A',B')\le(A,B) $ since~$ \alpha $ is an isomorphism. From this we know that~$ A'\sub A $. We further have~$ (B',A')\in F_z $ and thus, by definition of~$ V_z $, that~$ v\in A' $. This shows~$ v\in A $. The argument that~$ B\supseteq\bigcup_{z\in T_y}V_z $ is similar.
	
	Having established that~$ (T,\cV) $ indeed induces~$ N $, we can now deduce from this that~\cref{td:T3} holds: if~$V_{t_1}$ and~$V_{t_3}$ are two bags of~$ (T,\cV) $ which both contain some vertex~$v$, then~$v$ also needs to lie in the separator of every separation that is an image under~$ \alpha $ of an edge on the path~$P$ in~$ T $ from~$t_1$ to~$t_3$. Therefore~$v$ lies in every~$V_{t_2}$ with~$t_2\in P$.
\end{proof}

\section{The profinite splinter lemma}\label{sec:profinite}

In this section we establish an extension of the \hyperref[thm:splinter]{splinter lemma}
to a large class of infinite separation systems: the profinite universes. Informally, a separation system is profinite if it is determined entirely by its finite subsystems. The most prominent, and most important, example of such a universe of separations is the separation system of an infinite graph: two separations of an infinite graph are comparable precisely if all their restrictions to finite subgraphs are comparable. Moreover, a pair $ (A,B) $ of sets of vertices of an infinite graph $ G $ is a separation if and only if the restriction of $ (A,B) $ to each finite subgraph $ H $ of $ G $ is a separation of $ H $. We will make this relation between the separations of a profinite universe and their finite restrictions more formal now.

\subsection{Introduction to profinite universes}\label{sec:profintroduction}

For an in-depth introduction to profinite separation systems we refer the reader to~\cite{ProfiniteASS}, where this class of separation systems was first introduced. In this section we shall give only the definitions, terms, and tools for profinite universes relevant to our studies.

A {\em directed set} is a poset $ P $ 
in which every two elements have a common upper bound, i.e., in which there is an $ r\in P $ with $ p\le r $ and $ q\le r $ for all $ p,q\in P $. Given a directed set~$ P $, an {\em inverse system (of finite sets)} is a family $ \cX=\family{X_p\mid p\in P} $ of finite sets, together with maps $ f_{qp}\colon X_q\to X_p $ for all $ q>p $ that are {\em compatible} in the sense that $ f_{rp}=f_{qp}\circ f_{rq} $ for all $ r>q>p $. If every set $ X_p $ is a finite universe of separations $ \vU_p $, and the maps $ f_{qp} $ are homomorphisms of universes, then the family $ \cU=\family{\vU_p\mid p\in P} $ is an {\em inverse system (of universes of separations)}.

A {\em limit} of an inverse system $ \cX=\family{X_p\mid p\in P} $ is a compatible choice of one element $ x_p $ from each $ X_p $, that is, a family $ \family{x_p\mid p\in P} $ with $ x_p\in X_p $ and $ f_{qp}(x_q)=x_p $ for all~$ q>p $. The {\em inverse limit} $ \invlim\cX $ of $ \family{X_p\mid p\in P} $ is the set of all limits of $ \cX $. It is a well-known fact that every inverse system of non-empty finite sets has a limit (cf.\ \cite{ProfiniteASS}).

{\em Limits} and the {\em inverse limit} of an inverse system of universes are defined in the same way. Then the inverse limit $ \vU=\invlim\cU $ of an inverse system of universes $ \cU=\family{\vU_p\mid p\in P} $ is itself a universe of separations by defining involution, partial order, joins, and meets coordinate-wise. That is by, for $ \vr=\family{\vr_p\mid p\in P} $ and $ \vs=\family{\vs_p\mid p\in P} $, letting
\[ \sv\coloneqq\family{\sv_p\mid p\in P} \]
as well as
\[ \vr\join\vs\coloneqq\family{\vr_p\join\vs_p\mid p\in P} \]
and
\[ \vr\meet\vs\coloneqq\family{\vr_p\meet\vs_p\mid p\in P}, \]
with $ \vr\le\vs $ if and only if $ \vr_p\le\vs_p $ for all $ p\in P $. In particular the involution, joins and meets of limits of $ \cU $ are again limits of $ \cU $.

A universe of separations is then called {\em profinite} if it is isomorphic to an inverse limit of finite universes of separations. The most prominent example of a profinite universe of separations comes from infinite graphs:

\begin{EX}\label{ex:graphsprofinite}
	Let $ G=(V,E) $ be an infinite graph and $ \vU=\vU(G) $ the universe of all separations of $ G $, including those $(A,B)$ with $|A\cap B|=\infty$. Then $ \vU $ is profinite: let $ \cX $ be the set of all finite $ Z\sub V $. Then $ \cX $, ordered by inclusion, is a directed set. For $ Z\in\cX $ let $ \vU_Z $ be the universe of separations of $ G[Z] $. We define maps $ f_{ZY}\colon\vU_Z\to\vU_Y $ for $ Y\subset Z $ by letting $ f_{ZY} $ map a separation $ (A_Z,B_Z) $ of $ G[Z] $ to $ (A_Z\cap Y\,,\,B_Z\cap Y) $, which is easily seen to be a separation of $ G[Y] $. These maps are clearly compatible, and thus the family $ \cU\coloneqq\family{\vU_Z\mid Z\in\cX} $ is an inverse system of finite universes.
	
	Let us show that $ \vU $ is isomorphic to the inverse limit of~$ \cU $. For this observe that for every separation $ (A,B) $ of $ G $ the family of its restrictions $ \family{(A\cap Z\,,\,B\cap Z)\mid Z\in\cX} $ is a limit of $ \cU $, and the map $ f\colon\vU\to\invlim\cU $ given by mapping each $ (A,B)\in\vU(G) $ to this family $ \family{(A\cap Z\,,\,B\cap Z)\mid Z\in\cX} $ is a homomorphism of universes. Moreover $ f $ is clearly injective, and its inverse is a homomorphism as well. To see that $ f $ is an isomorphism between $ \vU $ and $ \invlim\cU $ it thus remains to show that $ f $ is surjective, that is, that every limit of $ \cU $ gives rise to a separation of $ G $.
	
	So let $ \vs=\family{(A_Z,B_Z)\mid Z\in\cX} $ be a limit of $ \cU $. Let $ A $ and $ B $ be the union of the sets $ A_Z $ and $ B_Z $, respectively, over all $ Z\in\cX $. We claim that $ (A,B) $ is a separation of $ G $. If so then $ f((A,B))=\vs $, showing that $ f $ is surjective.
	
	Note first that $ A\cup B=V $, since $ v\in A_{\menge{v}}\cup B_{\menge{v}} $ for every $ v\in V $. Suppose now that $ G $ contains an edge $ vw\in E $ with $ v\in A\sm B $ and $ w\in B\sm A $. Let $ Z\coloneqq\{v,w\} $ and consider the induced subgraph $ G[Z] $ of $ G $: by definition of $ (A,B) $ we have $ A_Z=\{v\} $ and $ B_Z=\{w\} $, but then $ vw $ is an edge of $ G[Z] $ between $ A_Z\sm B_Z $ and $ B_Z\sm A_Z $, contradicting the assumption that $ (A_Z,B_Z)\in\vU_Z $. Therefore $ (A,B) $ is indeed a separations of~$ G $.
\end{EX}

For the remainder of this section, let $ \cU=\family{\vU_p\mid p \in P} $ be an inverse system of universes and $ \vU $ its inverse limit.

Given an element $ \vs=\family{\vs_p\mid p\in P} $ of $ \vU $, we write $ (\vs\proj p)\coloneqq\vs_p $ for the projection of $ \vs $ to~$ \vU_p $. Likewise, for a set $ O\sub\vU $ the projection $ O\proj p $ to $ \vU_p $ is the set of all $ \vs\proj p $ with~$ \vs\in O $.

A family $ \family{N_p\mid p\in P} $ of finite subsets of the $ \vU_p $ is a {\em restriction} of $ \cU $ if $ f_{qp}(N_q)=N_p $ for all~$ q>p $. The inverse limit of such a restriction of $ \cU $ is a subset of~$ \vU=\invlim\cU $.

By equipping each $ \vU_p $ in $ \cU $ with the discrete topology, the inverse limit $ \vU=\invlim\cU $ becomes a topological space as a subspace of the product space~$ \prod_{p\in P}\vU_p $. Doing so makes the maps $ f_{qp} $ continuous, and it is easy to see that $ \vU $ is a closed subset of the product $ \prod_{p\in P}\vU_p $ and hence compact. In fact, the topology on $ \vU $ can be described in terms of the sets $ \vU_p $:

\begin{LEM}[\cite{ProfiniteASS}*{Lemma~4.1}]\label{lem:profclosed}\text{}
	\begin{enumerate}[label=\textup{(\roman*)},nosep,beginpenalty=10000]
	   \item The topological closure in~$\vU$ of a set $O\sub\vU$ is the set of all limits $\vs = \family{\vs_p\mid p\in P}$ with $\vs_p\in O\proj p$ for all~$p$.
	   \item A set $O\sub\vU$ is closed in~$\vU$ if and only if there are sets $O_p\sub\vU_p$, with $f_{qp}(O_q)\sub O_p$ whenever $p<q\in P$, such that $O = \invlim\, \family{O_p \mid p\in P}$.
	\end{enumerate}
\end{LEM}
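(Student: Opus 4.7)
My approach hinges on one topological observation: since $P$ is directed and each $\vU_p$ carries the discrete topology, I expect the collection of sets
\[ B_q(\vs) := \menge{\vt \in \vU : \vt\proj q = \vs\proj q}, \quad q \in P, \]
to form a neighbourhood base at any point $\vs \in \vU$. To establish this I would take a standard subbasic open neighbourhood of $\vs$ in $\prod_{p} \vU_p$, which pins down coordinates $p_1,\dots,p_n$ of $\vs$; then choose a common upper bound $q\ge p_1,\dots,p_n$ in the directed set $P$ and invoke compatibility of $\vs$ to argue that $\vt\proj q=\vs\proj q$ forces $\vt\proj{p_i}=f_{qp_i}(\vt\proj q)=f_{qp_i}(\vs\proj q)=\vs\proj{p_i}$ for each~$i$. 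This exhibits $B_q(\vs)$ as a subset of the given subbasic neighbourhood.

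With this base in hand, part~(i) falls out directly. If $\vs$ lies in the closure of $O$, then every $B_q(\vs)$ meets $O$, and any $\vt\in O\cap B_q(\vs)$ witnesses $\vs\proj q=\vt\proj q\in O\proj q$. Conversely, if $\vs\proj q\in O\proj q$ for every $q$, then for any given $q$ I can pick some $\vt\in O$ with $\vt\proj q=\vs\proj q$, showing that every basic neighbourhood of $\vs$ meets~$O$, so $\vs\in\overline O$.

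For the forward direction of part~(ii) I would simply set $O_p:=O\proj p$. The inclusion $f_{qp}(O_q)\sub O_p$ is immediate since each $\vs\in O$ satisfies $\vs\proj p=f_{qp}(\vs\proj q)$, and part~(i) then identifies $O=\overline O$ with $\invlim\family{O_p\mid p\in P}$. For the converse, given any compatible family $(O_p)_{p\in P}$ I plan to write $\invlim\family{O_p\mid p\in P}$ as
\[ \Big(\prod_{p\in P}O_p\Big)\cap\bigcap_{p<q}\menge{\vt\in\vU:f_{qp}(\vt\proj q)=\vt\proj p}, \]
and note that this is a closed subset of $\prod_{p}\vU_p$ (each $O_p$ being clopen in the discrete space $\vU_p$, and each compatibility condition closed by continuity of $f_{qp}$), whence it is closed in $\vU$.

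The main, really the only, obstacle is the initial neighbourhood-base observation, which is where the directedness of $P$ is essential; once that step is secured, both parts reduce to routine unravelling of definitions.
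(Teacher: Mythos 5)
The paper does not actually supply a proof of this lemma; it cites it verbatim from the reference on profinite separation systems. Your argument is correct and is the standard one: establishing the neighbourhood base $\{B_q(\vs) : q\in P\}$ at each $\vs$ (which is exactly where directedness of $P$ is used) makes part~(i) a routine unravelling, and part~(ii) then follows by taking $O_p = O\proj p$ in the forward direction and by observing that $\invlim\family{O_p\mid p\in P} = \bigl(\prod_p O_p\bigr)\cap\vU$ is closed in the converse. There is nothing to compare against in the paper, but your proof is complete and correct.
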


We shall also use the following lemma, which is a re-formulation of Lemma~5.4 from~\cite{ProfiniteASS}:

\begin{LEM}\label{lem:profnested}
	A set $ N\sub \vU $ is nested if and only if $ (N\proj p)\sub \vU_p $ is nested for all $ p\in P $.
\end{LEM}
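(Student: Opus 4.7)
The forward direction is immediate: homomorphisms of universes preserve the partial order, so if $\vr,\vs\in N$ have orientations with $\vr\le\vs$ in $\vU$ then applying any projection gives $\vr\restricts p\le\vs\restricts p$ in $\vU_p$, showing $N\restricts p$ is nested.

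For the backward direction, the task is, given $\vr,\vs\in N$, to upgrade the pointwise nestedness of $\{\vr\restricts p,\vs\restricts p\}$ in each $\vU_p$ to nestedness in $\vU$ itself. Since the order, involution, joins, and meets in $\vU$ are coordinate-wise, I first note that proving $\vr\le\vs$ (or $\vr\le\sv$, or $\rv\le\vs$, or $\rv\le\sv$) in $\vU$ amounts to proving the corresponding inequality in \emph{every} $\vU_p$ \emph{simultaneously}. The difficulty is that, a priori, different coordinates $p$ might realise the nestedness of $\vr\restricts p$ and $\vs\restricts p$ via different orientations, and I need to extract one uniform choice.

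The plan is a compactness argument. Enumerate the four possible nesting inequalities as $i\in\{1,2,3,4\}$ and set
\[ P_i\coloneqq\menge{p\in P\mid\text{the $i$-th inequality between $\vr\restricts p$ and $\vs\restricts p$ holds in $\vU_p$}}. \]
The assumption that $N\restricts p$ is nested gives $P_1\cup P_2\cup P_3\cup P_4=P$. Moreover, each $P_i$ is downward closed in $P$: if $q\in P_i$ and $p\le q$, then applying the homomorphism $f_{qp}$ (which preserves $\le$ and commutes with $^*$) to the $i$-th inequality at $q$ yields the $i$-th inequality at $p$. Equivalently, the sets $J(q)\coloneqq\menge{i\mid q\in P_i}\sub\{1,2,3,4\}$ are nonempty and satisfy $J(q)\sub J(p)$ whenever $p\le q$.

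Using directedness of $P$, the family $(J(q))_{q\in P}$ has the finite intersection property: given $q_1,\dots,q_n$, pick a common upper bound $q$ in $P$ and observe $\emptyset\ne J(q)\sub J(q_1)\cap\dots\cap J(q_n)$. Since the $J(q)$ live in the finite set $\{1,2,3,4\}$, their total intersection is nonempty, so some fixed $i\in\{1,2,3,4\}$ lies in $P_i$ for every $p\in P$. That uniform $i$ witnesses the $i$-th inequality between $\vr$ and $\vs$ coordinate-wise, hence in $\vU$. Thus any two elements of $N$ are nested in $\vU$, as desired. The only real obstacle is setting up the finite intersection argument cleanly; once the sets $P_i$ are seen to be downward closed, the conclusion follows from finiteness of $\{1,2,3,4\}$ and directedness of $P$.
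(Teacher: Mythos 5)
Your proof is correct. Note that the paper itself gives no proof of this lemma---it is stated as a re-formulation of Lemma~5.4 from the cited profinite separation systems paper---so there is no in-text argument to compare against. Your argument is self-contained and sound: the forward direction uses that projections are order- and involution-preserving, and the backward direction is handled by exactly the right compactness mechanism. In detail, the sets $J(q)$ are nonempty subsets of the finite set $\{1,2,3,4\}$, the compatibility maps $f_{qp}$ make them decreasing along the directed order, directedness gives the finite intersection property, and finiteness then forces a common index $i$, which fixes a single pair of orientations of $r$ and $s$ that are comparable in every coordinate and hence in $\vU$ (whose order is defined coordinate-wise). This is the standard finite-intersection argument one uses to upgrade pointwise statements in an inverse limit to global ones, and it is almost certainly the same technique as in the cited source.
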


A consequence of~\cref{lem:profclosed} and \cref{lem:profnested} (which we will not use) is that the topological closure of a nested set is still nested.

Finally, let us show that two closed sets in $ \vU $ intersect if all of their projections do:

\begin{LEM}\label{lem:profintersect}
	If $ R,S\sub \vU $ are closed and $ (R\proj p)\cap(S\proj p) $ is non-empty for every $ p\in P $, then $ R\cap S $ is non-empty.
\end{LEM}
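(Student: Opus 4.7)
The plan is to reduce the statement to the standard fact that every inverse system of non-empty finite sets has a non-empty inverse limit (as recalled in \cref{sec:profintroduction}). The key observation is that the projections $R\proj p$ and $S\proj p$ are compatible with the bonding maps, so their intersections $C_p \coloneqq (R\proj p)\cap(S\proj p)$ will again assemble into an inverse system.

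First I would verify that $\family{C_p\mid p\in P}$, equipped with the restrictions of the maps $f_{qp}$, is indeed an inverse system. For this I need $f_{qp}(C_q)\sub C_p$ whenever $q>p$. If $x\in R\proj q$, say $x=\vr\proj q$ for some $\vr\in R$, then by compatibility of the maps we have $f_{qp}(x)=f_{qp}(\vr\proj q)=\vr\proj p\in R\proj p$; hence $f_{qp}(R\proj q)\sub R\proj p$, and the same argument applies to $S$. Taking intersections gives $f_{qp}(C_q)\sub C_p$ as required. By assumption each $C_p$ is non-empty, and as a subset of the finite set $\vU_p$ it is finite.

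Next I would invoke the standard fact that every inverse system of non-empty finite sets has a non-empty inverse limit to obtain an element $\vs=\family{\vs_p\mid p\in P}$ with $\vs_p\in C_p$ for all $p$ and $f_{qp}(\vs_q)=\vs_p$ whenever $q>p$. This $\vs$ is in particular a limit of $\cU$, so $\vs\in\vU$.

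Finally I would conclude using \cref{lem:profclosed}(i). Since $\vs_p\in C_p\sub R\proj p$ for every $p\in P$, the element $\vs$ lies in the topological closure of $R$; as $R$ is closed, this means $\vs\in R$. The same argument applied to $S$ yields $\vs\in S$, and hence $\vs\in R\cap S$, as desired. I do not expect any real obstacle here — the content of the lemma is essentially just the non-emptiness of inverse limits of finite non-empty sets, dressed up in the language of closed subsets of~$\vU$; the only thing to check is that the natural candidate family $(C_p)$ is genuinely an inverse system, which is immediate from the compatibility of the projections.
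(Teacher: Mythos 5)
Your proof is correct and follows essentially the same route as the paper's: form the inverse system $\family{(R\proj p)\cap(S\proj p)\mid p\in P}$ with the restricted bonding maps, take a limit (which exists since each term is finite and non-empty), and use closedness of $R$ and $S$ via \cref{lem:profclosed} to conclude the limit lies in $R\cap S$. You simply spell out the verification that the restricted maps land where they should and the appeal to \cref{lem:profclosed}(i), both of which the paper leaves implicit.
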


\begin{proof}
The family $ \family{(R\proj p)\cap(S\proj p)\mid p\in P} $ is an inverse system of non-empty finite subsets of $ \vU $, using as maps the restrictions of the maps $ f_{qp} $ of $ \family{\vU_p\mid p\in P} $. Since both $ R $ and $ S $ are closed in $ \vU $, every limit of this family lies in $ R\cap S $, which is therefore non-empty.
\end{proof}

\subsection{Statement and proof of the profinite splinter lemma}

Using the framework of profinite separation systems, we can now extend~\cref{thm:splinter} to infinite separation systems:

\begin{THM}\label{thm:profinite} 
Let $ \vU=\invlim\family{\vU_p\mid p\in P} $ be a profinite universe and $ \cB $ a family of non-empty closed subsets of~$ \vU $. If $ \cB $ splinters then there is a closed nested set $ N\sub \vU $ containing at least one element from each member of~$ \cB $. 
\end{THM}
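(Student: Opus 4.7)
My plan is to apply \cref{thm:splinter} coordinatewise in each finite universe $\vU_p$ and then stitch the resulting nested sets together via the inverse-limit machinery.

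First, I verify that the projected family $\cB\proj p\coloneqq\family{\cA_i\proj p\mid i\in I}$ splinters in $\vU_p$ for every $p\in P$. Given $s_p\in\cA_i\proj p$ and $t_p\in\cA_j\proj p$, lift to $s\in\cA_i$ and $t\in\cA_j$ and apply splintering of~$\cB$. Since the projection $\vU\to\vU_p$ is a homomorphism of universes, it commutes with $\join$, $\meet$ and~$^*$, so it carries every corner of $s,t$ to the corresponding corner of $s_p,t_p$; whichever of the three splintering outcomes holds for $s,t$ therefore descends to a witness for $s_p,t_p$.

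Next, I organise the coordinatewise solutions into an inverse system. Set
\[\cN_p\coloneqq\{N\sub\vU_p\mid N\text{ is nested and meets every }\cA_i\proj p\}.\]
Because $\vU_p$ is finite, only finitely many distinct sets occur among the $\cA_i\proj p$, so \cref{thm:splinter} applied to a system of distinct representatives yields an element of~$\cN_p$; thus $\cN_p$ is non-empty, and finite because $\vU_p$ is. For $q>p$, the assignment $N\mapsto f_{qp}(N)$ maps $\cN_q$ into $\cN_p$: $f_{qp}$ preserves $\le$ and~$^*$ and so maps nested sets to nested sets, and $f_{qp}(\cA_i\proj q)=\cA_i\proj p$, so the meeting condition is preserved. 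These induced maps inherit compatibility from the $f_{qp}$, so $\family{\cN_p\mid p\in P}$ is an inverse system of non-empty finite sets and therefore has a limit $\family{N_p\mid p\in P}$.

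Finally, since $f_{qp}(N_q)=N_p$ for every $q>p$, the family $\family{N_p}$ is a restriction of~$\cU$, and $N\coloneqq\invlim\family{N_p\mid p\in P}$ is a closed subset of~$\vU$ by \cref{lem:profclosed}(ii). The surjectivity $f_{qp}(N_q)=N_p$ together with the standard compactness argument lifting each $x\in N_p$ to a limit yields $N\proj p=N_p$ for every~$p$. Each $N_p$ is nested, so $N$ is nested by \cref{lem:profnested}. Both $N$ and each~$\cA_i$ are closed with $(N\proj p)\cap(\cA_i\proj p)=N_p\cap(\cA_i\proj p)\ne\emptyset$ for every~$p$, so \cref{lem:profintersect} gives $N\cap\cA_i\ne\emptyset$ for every $i\in I$, as required. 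The conceptual hinge---the only place where anything genuinely infinitary happens---is taking $\cN_p$ to be the entire collection of admissible nested witness sets rather than individual selections: this is precisely what turns the coordinatewise choices into an inverse system of non-empty finite sets, to which the standard limit result then applies to produce a coherent choice across all~$p$.
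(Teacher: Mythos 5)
Your proof follows essentially the same route as the paper's: project $\cB$ to each $\vU_p$, verify the projections splinter, collect all nested selector sets into $\cN_p$, observe these form an inverse system of non-empty finite sets, take a limit, and conclude via \cref{lem:profclosed}, \cref{lem:profnested} and \cref{lem:profintersect}. You even fill in two small points the paper elides — that \cref{thm:splinter} applies after passing to the finitely many distinct projected sets in $\vU_p$, and that $N\proj p = N_p$ (rather than just $\subseteq$) needs a further compactness/lifting argument — so this is a faithful and slightly more detailed rendering of the same proof.
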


\begin{proof}
For $ p\in P $ let $ \cB_p $ denote the projection $ \cB\proj p $ of $ \cB $ to $ \vU_p $, that is, the family of all~$ B\proj p $, where $ B $ is a member of $ \cB $. Then each projection $ \cB_p $ splinters in $ \vU_p $: consider two members $ B_p $ and $ B_p' $ of $ \cB_p $, with separations $ \vr_p\in B_p $ and $ \vs_p\in B_p' $. By definition of $ B_p $ and $ B_p' $ there are $ \vr\in B $ and $ \vs\in B' $ with $ (\vr\proj p)=\vr_p $ and $ (\vs\proj p)=\vs_p $. Since $ \cB $ splinters we have either that one of $ \vr $ or $ \vs $ lies in $ B\cap B' $, in which case its projection to $ \vU_p $ lies in $ B_p\cap B_p' $, or else some corner separation $ \vc $ of $ \vr $ and $ \vs $ lies in $ B\cup B' $. In the latter case $ (\vc\proj p)\in B_p\cup B_p' $ is a corner separation of $ \vr_p $ and $ \vr_s $, showing that $ \cB_p $ indeed splinters.

By the above observation and \cref{thm:splinter} applied to $ \cB_p $ and $ \vU_p $ there is a nested set $ N_p\sub \vU_p $ for every $ p\in P $ which contains an element of each member of $ \cB_p $. Let $ \cN_p $ be the set of all such nested sets~$ N_p\sub \vU_p $. Observe that if $ N_q\in\cN_q $ and $ q>p $ then $ f_{qp}(N_q)\sub\vU_p $ is a nested set meeting each member of $ \cB_p $ and hence lies in~$ \cN_p $. Therefore the family $ \family{\cN_p\mid p\in P} $ together with the maps mapping $ N_q\in\cN_q $ to $ f_{qp}(N_q)\in\cN_p $ for $ q>p $ is an inverse system of non-empty finite sets.

Let $ \family{N_p\mid p\in P} $ be a limit of $ \family{\cN_p\mid p\in P} $. Then this limit is a restriction of $ \cU $, and hence $ N\coloneqq\invlim\family{N_p\mid p\in P} $ is a subset of $ \vU $. In fact $ N $ is a closed nested subset of $ \vU $ by~\cref{lem:profclosed} and~\cref{lem:profnested}. To see that $ N $ contains an element of each member of $ \cB $, let $ B $ be a member of $ \cB $. Then $ (N\proj p)\cap (B\proj p) $ is non-empty for each $ p\in P $ since $ (N\proj p)=N_p\in\cN_p $, and thus by \cref{lem:profintersect} and the assumption that $ B $ is closed in $ \vU $ the sets $ N $ and $ B $ intersect.
\end{proof}

\section{Application of the profinite splinter lemma}\label{sec:profinite_app}

For this section, let $ G=(V,E) $ be a connected graph and $ \cX $ the set of finite subsets of~$ V $. As seen in~\cref{ex:graphsprofinite}, the universe $ \vU=\vU(G) $ of separations of $ G $ is profinite since it arises as the inverse limit of $ \family{\vU_Z\mid Z\in\cX} $, where $ \vU_Z $ denotes the universe of separations of~$ G[Z] $. Following the notation of~\cref{sec:profinite}, we write $ (A,B)\proj Z $ for the projection $ (A\cap Z\,,\,B\cap Z) $ of a separation $ (A,B)\in\vU $ to~$ \vU_Z $.

For $ k\in\N $ let $ \vS_k=\vS_k(G) $ be the separation system of all separations of order $ <k $ of~$ G $. Using~\cref{lem:profclosed}, it is easy to observe the following fact about these $ \vS_k $, which will be used throughout this section:

\begin{OBS}\label{obs:Skclosed}
	For every $ k\in\N $ the set $ \vS_k $ is a closed subset of~$ \vU $.
\end{OBS}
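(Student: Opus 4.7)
The plan is to apply \cref{lem:profclosed}(ii): it suffices to exhibit, for every finite $Z\in\cX$, a set $O_Z\sub\vU_Z$ such that the family $\family{O_Z\mid Z\in\cX}$ is compatible under the maps $f_{ZY}$ and has inverse limit $\vS_k$ under the isomorphism $\vU\cong\invlim\family{\vU_Z\mid Z\in\cX}$ established in \cref{ex:graphsprofinite}.

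The natural candidate works: let $O_Z$ consist of all separations $(A_Z,B_Z)\in\vU_Z$ of order strictly less than $k$. Compatibility is immediate, since for $Y\sub Z$ and $(A_Z,B_Z)\in O_Z$ the projection $f_{ZY}(A_Z,B_Z)=(A_Z\cap Y,\,B_Z\cap Y)$ has separator $A_Z\cap B_Z\cap Y\sub A_Z\cap B_Z$, which has fewer than $k$ elements, so that $f_{ZY}(O_Z)\sub O_Y$.

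It remains to identify $\vS_k$ with $\invlim\family{O_Z\mid Z\in\cX}$. The inclusion $\vS_k\sub\invlim\family{O_Z\mid Z\in\cX}$ holds because for $(A,B)\in\vS_k$ every restriction $(A\cap Z,B\cap Z)$ has separator of size at most $|A\cap B|<k$. Conversely, suppose $(A,B)\in\vU$ has all its restrictions in the respective $O_Z$ but $|A\cap B|\ge k$. Then picking $k$ distinct vertices $v_1,\dots,v_k$ of $A\cap B$ and setting $Z\coloneqq\{v_1,\dots,v_k\}$ yields $(A\cap Z)\cap(B\cap Z)\supseteq\{v_1,\dots,v_k\}$, so $(A,B)\proj Z$ has order at least $k$, contradicting $(A,B)\proj Z\in O_Z$. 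There is no real obstacle here; the argument amounts to unpacking the definitions and applying the characterisation of closed sets from \cref{lem:profclosed}(ii).
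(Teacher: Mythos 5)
Your proof is correct and follows exactly the route the paper suggests (the observation is only asserted there with a pointer to \cref{lem:profclosed}). You take $O_Z$ to be the separations of $G[Z]$ of order $<k$, verify that the projection maps carry $O_Z$ into $O_Y$, and check both inclusions of $\vS_k=\invlim\family{O_Z\mid Z\in\cX}$; all of this is accurate, and the converse direction correctly uses that any $(A,B)$ with $\abs{A\cap B}\ge k$ has a restriction of order exactly $k$.
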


Our main goal in this section is to use~\cref{thm:profinite} to find a nested set of separations which efficiently distinguishes a large set of profiles of~$ G $. Concretely, we will be able to distinguish the set of all {\em regular bounded} profiles in~$ G $.
A profile $ P $ in $ G $ is {\em bounded} if $ P $ is a $ k $-profile of $ G $ for some $ k $ but is not a subset of any $ \aleph_0 $-profile of~$ G $.
Recall that a profile in $G$ is regular if it contains no separation of the form $(V(G), X)$.

The main result of this section, then, will be the following:

\begin{THM}\label{thm:profinite_johannes}
	Let $\cP$ be a set of robust regular bounded profiles in~$G$. Then there is a nested set $N$ of separations of $G$ which efficiently distinguishes all distinguishable profiles in~$\cP$.
\end{THM}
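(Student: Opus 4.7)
The plan is to apply the profinite splinter lemma~(\cref{thm:profinite}) to the family $\cB=\family{\cA_{P,P'}\mid P,P'\in\cP\text{ distinguishable}}$, where $\cA_{P,P'}\sub\vU$ consists of those oriented separations $\vs\in\vU$ with $\vs\in P$, $\sv\in P'$, and $|s|=m(P,P')$---the minimum order of a separation distinguishing $P$ from~$P'$. A closed nested set $N\sub\vU$ meeting every $\cA_{P,P'}$, as provided by~\cref{thm:profinite}, will then be the desired nested set, since any orientation of its underlying unoriented separations remains nested.

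Verifying the hypotheses of~\cref{thm:profinite} amounts to checking, for each pair, that $\cA_{P,P'}$ is non-empty and closed in~$\vU$, and that the family~$\cB$ splinters. Non-emptiness is immediate from distinguishability together with the well-foundedness of~$\Nbb_0$. For closedness, $\cA_{P,P'}\sub\vS_{m(P,P')+1}$ lies in a closed set by~\cref{obs:Skclosed}, so it remains to show that the conditions ``$\vs\in P$'' and ``$\sv\in P'$'' cut out a closed subset of~$\vS_{m+1}$. This reduces to showing that each bounded profile $P\in\cP$ is a closed subset of its defining~$\vS_k$, and here the boundedness hypothesis becomes essential: it guarantees that $k$ can be taken finite, allowing us to describe~$P$ as an inverse limit of its projections $P\restricts Z$ to finite $Z\sub V$ via~\cref{lem:profclosed}.

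For the splinter property, consider crossing $\vs\in\cA_{P_1,P_1'}$ and $\vt\in\cA_{P_2,P_2'}$ with $\vs\notin\cA_{P_2,P_2'}$ and $\vt\notin\cA_{P_1,P_1'}$. Then both $P_2$ and $P_2'$ orient $s$ the same way, and likewise for $t$ with respect to $P_1,P_1'$. The profile property combined with \emph{robustness}---the latter needed because $t$ may have order larger than the parameter~$k_1$ of~$P_1$, so that $P_1$ does not orient~$t$ at all---shows that one of the four corners of $s$ and~$t$ has an orientation lying in the ``correct'' profile in such a way as to distinguish the corresponding pair; \cref{lem:lattice}, via submodularity, then certifies that this corner has order exactly $m(P_1,P_1')$ or $m(P_2,P_2')$, placing it in~$\cA_{P_1,P_1'}\cup\cA_{P_2,P_2'}$.

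The principal obstacle I expect is the closedness of each~$\cA_{P,P'}$: showing that a bounded profile is closed in its defining~$\vS_k$ requires careful use of the profinite topology, and boundedness does the essential work here by keeping~$k$ finite so that~\cref{lem:profclosed} is applicable to the projections to finite~$Z$. A secondary subtlety is the use of robustness in the splinter argument: since profiles in~$\cP$ need not orient every finite-order separation, the finite-case corner analysis must be supplemented with robustness to extract the desired corner. With these two points in hand, the theorem follows at once from~\cref{thm:profinite}.
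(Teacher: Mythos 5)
Your overall plan---apply the profinite splinter lemma (\cref{thm:profinite}) to the family of sets of efficient distinguishers, verifying closedness and the splinter property separately---is the same strategy the paper follows, and your sketch of the splinter verification tracks the paper's \cref{lem:findcorner1,lem:findcorner2} adequately. The problem lies entirely in your treatment of closedness.

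You reduce closedness of $\cA_{P,P'}$ to ``each bounded profile $P$ is a closed subset of its defining $\vS_k$,'' and you justify this with ``boundedness guarantees that $k$ can be taken finite, allowing us to describe $P$ as an inverse limit of its projections via \cref{lem:profclosed}.'' This reasoning does not hold up. First, it misstates what boundedness is: being a $k$-profile for \emph{finite} $k$ is part of the \emph{definition} of bounded, not a consequence of it, and the definition has a second clause---that $P$ does not extend to any $\aleph_0$-profile---which your argument never touches. Second, \cref{lem:profclosed} applies to arbitrary subsets of $\vU$ regardless of whether $k$ is finite; it tells you what the \emph{closure} of a set is, not that the set equals its closure. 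Finiteness of $k$ does not, by itself, make $P$ or $\cA_{P,P'}$ closed. The whole difficulty here is to rule out sequences of separations in $\cA_{P,P'}$ whose separators drift off to infinity, converging (in the profinite topology) to something outside $\cA_{P,P'}$.

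This is precisely the work the paper does in \cref{lem:aleph_0-tangles,lem:chain_seps,lem:finitely_different_separators}. Non-extendability is essential: \cref{lem:chain_seps} derives a contradiction by explicitly constructing an $\aleph_0$-profile $\tilde P\supseteq P$ from a putative infinite strictly increasing sequence of separators, and \cref{lem:aleph_0-tangles} (which says a bounded regular profile ``points at'' a unique component of $G-X$ for every small finite $X$) likewise uses non-extendability in its proof. Regularity of $P$ also plays a role there, which your sketch does not invoke. With those lemmas one obtains \cref{lem:finitely_different_separators} (only finitely many separators occur in $\cA_{P,P'}$), which gives the explicit finite-union-of-closed-sets description in \cref{prop:closed}. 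Your proposal identifies closedness as the ``principal obstacle,'' which is accurate, but the proposed route around it---closedness of $P$ itself via finiteness of $k$---would not get off the ground without re-deriving essentially \cref{lem:aleph_0-tangles} and \cref{lem:chain_seps}, and you have not engaged with the part of the boundedness hypothesis that makes those lemmas work.
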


It can be shown~(\cite{EndsAndTangles}) that every $\aleph_0$-profile $P$ in $G$ corresponds to either an end of~$G$, or a so-called \emph{ultrafilter tangle}: an orientation which, for some $ X\in\cX $, induces a non-principal ultrafilter on the set of components of~$ G-X $. These ultrafilter tangles are studied extensively in \cite{EndsAndTangles}. Ends and ultafilter tangles both exhibit a different behaviour than profiles of finite graphs. Bounded profiles, on the other hand, {\em do} behave similarly to profiles of finite graphs, and consequently we shall be able to utilize~\cref{thm:profinite} to establish~\cref{thm:profinite_johannes}. The latter is a weakening of~\cref{thm:Johannes}, but our proof via~\cref{thm:profinite} will be significantly shorter and simpler than the proof of~\cref{thm:Johannes}.

For the remainder of this section let $ \cP $ be a set of robust regular bounded profiles in~$ G $. Given two profiles $P$ and $P'$ in $ \cP $ let $\cA_{P,P'}$ be the set of all separations of $ G $ that efficiently distinguish $P$ and~$P'$.

In order to deduce~\cref{thm:profinite_johannes} from~\cref{thm:profinite} we need to show that the family of the sets $ \cA_{P,P'} $ splinters, and that each $ \cA_{P,P'} $ is a closed subset of~$ \vU $. We will start by showing the latter:

\begin{PROP}\label{prop:closed}
	Let $ P $ and $ P' $ be distinguishable regular bounded profiles in~$ G $. Then $ \cA_{P,P'} $ is a closed subset of~$ \vU $.
\end{PROP}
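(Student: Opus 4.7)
The plan is to apply \cref{lem:profclosed}(i): I would verify that any $\vs\in\vU$ satisfying $\vs\proj Z\in\cA_{P,P'}\proj Z$ for every finite $Z\sub V$ must itself lie in~$\cA_{P,P'}$. Fix such an $\vs$, let $m$ denote the efficient distinguishing order for the pair $P,P'$, and for each finite $Z$ pick a separation $\vt_Z\in\cA_{P,P'}$ with $\vt_Z\proj Z=\vs\proj Z$. A straightforward initial step is to bound $|\vs|$: every projection $\vs\proj Z$ has order at most $|\vt_Z|=m$, and since the order of a separation $(A,B)\in\vU$ equals $\sup_Z|(A\cap B)\cap Z|$, this forces $|\vs|\le m$. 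In particular $|\vs|$ lies strictly below the orders $k,k'$ of $P$ and $P'$, so both profiles orient~$\vs$.

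The main step is to show that $\vs$ distinguishes $P$ and $P'$. For each $Z$ the orientation of $\vt_Z$ that equals $\vs$ on $Z$ lies in exactly one of $P$ or $P'$; since the finite subsets of $V$ form a directed set, pigeonhole yields a cofinal subfamily on which this orientation belongs to the same profile throughout, say $\vt_Z\in P$ and $\tv_Z\in P'$. Suppose for contradiction that $\vs\in P\cap P'$ (the symmetric case is analogous). The profile property of $P$ applied to the pair $\vs,\vt_Z$, combined with submodularity to control the order of $\vs\join\vt_Z$---which a direct computation exploiting $\vt_Z\proj Z=\vs\proj Z$ shows is at most $m$---gives $\vs\join\vt_Z\in P$; by \cref{lem:lattice} this corner even lies in $\cA_{P,P'}\cap P$. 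A symmetric computation inside $P'$ using the pair $\vs,\tv_Z$ produces analogous corners there, and since all such corners agree with $\vs$ on $Z$ while being forced to distinguish $P,P'$ for arbitrarily large $Z$, one derives the required contradiction with the assumption $\vs\in P\cap P'$.

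Once $\vs$ is known to distinguish, the minimality of $m$ together with $|\vs|\le m$ forces $|\vs|=m$, so $\vs\in\cA_{P,P'}$. The principal obstacle is the distinguishing step: making rigorous the heuristic that ``$\vs$'s projections behave like efficient distinguishers on every $Z$, so $\vs$ itself must be one'' requires careful bookkeeping of orientations and essential use of the boundedness of the profiles (which rules out ends and ultrafilter tangles as limit points for~$\vs$) as well as regularity (which excludes degenerate cosmall separations that could otherwise disturb the argument).
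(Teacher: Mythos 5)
The crucial step of your proposal—showing that a limit $\vs$ of $\cA_{P,P'}$ must itself distinguish $P$ and $P'$—is where the argument breaks down, and it is exactly where boundedness must do real work. You remark that boundedness ``rules out ends and ultrafilter tangles as limit points,'' but nowhere in the argument is this cashed out: your computation that $|\vs|\le m$ and the pigeonhole step go through verbatim for a double ray with $P,P'$ its two end tangles, where the limit $(V,\emptyset)$ is in the closure of $\cA_{P,P'}$ but not in $\cA_{P,P'}$, so no contradiction can possibly be derived from $\vs\in P\cap P'$ without invoking boundedness concretely. Moreover, the invocation of \cref{lem:lattice} is not legitimate: that lemma requires both separations to distinguish $P$ and $P'$ efficiently, whereas $\vs$ is precisely the separation you have assumed does \emph{not} distinguish them. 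Finally, the closing sentence (``all such corners agree with $\vs$ on $Z$ while being forced to distinguish\ldots one derives the required contradiction'') names no actual contradiction: producing distinguishers that coincide with $\vs$ on larger and larger $Z$ is exactly the situation you started from (the $\vt_Z$ themselves already have this property) and is consistent with $\vs\notin\cA_{P,P'}$.

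The paper takes a different and essentially unavoidable route. Rather than arguing directly on a limit $\vs$, it first establishes via \cref{lem:aleph_0-tangles}, \cref{lem:chain_seps} and \cref{lem:finitely_different_separators}—all of which use boundedness in an essential way—that only finitely many vertex sets $X_1,\dots,X_n$ arise as separators of members of $\cA_{P,P'}$, and that for each $X_i$ the orientation of a distinguisher is pinned down by two distinguished components $C_i,C_i'$ of $G-X_i$. This yields an explicit description of $\cA_{P,P'}$ as a finite union of sets of the form $\{(A,B): A\cap B=X_i,\ C_i\sub A,\ C_i'\sub B\}$, each of which is closed by \cref{lem:profclosed}. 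Without some analogue of that finiteness and componentwise control, the direct limit argument you sketch cannot be completed.
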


In order to prove~\cref{prop:closed} we will first need to show a series of lemmas about how bounded profiles, and their efficient distinguishers, behave.
The first step is to show that a regular bounded profile, for every sufficiently small set $X$ of vertices, points towards a component of $G-X$. That is to say: bounded profiles do not behave like the ultrafilter tangles of~\cite{EndsAndTangles}.

\begin{LEM}\label{lem:aleph_0-tangles}
	Let $ X $ be a finite set of vertices and $ P $ a regular bounded profile of order at least $ \abs{X}+1 $ in~$ G $. Then there is a unique component $ C $ of $ G-X $ with~$ (V\sm C\,,\,C\cup X)\in P $.
\end{LEM}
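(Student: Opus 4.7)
The plan is to handle uniqueness by a short consistency argument, then prove existence by contradiction, splitting according to whether $G-X$ has finitely or infinitely many components. Throughout, set $s_C := (V\setminus C, C\cup X)$ for each component $C$ of $G-X$; each such $s_C$ has order exactly $|X| < k$, so $P$ orients it. For uniqueness, suppose distinct components $C_1, C_2$ both satisfy $s_{C_i} \in P$. Disjointness of $C_1, C_2$ from each other and from $X$ gives directly that $s_{C_1}^* \leq s_{C_2}$, contradicting consistency of $P$.

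For existence, suppose for contradiction that $s_C^* \in P$ for every component $C$. If $G-X$ has only finitely many components, iterate the profile property on the $s_{C_i}^*$: each pairwise join $s_{C_i}^*\join s_{C_j}^* = (C_i\cup C_j\cup X, V\setminus(C_i\cup C_j))$ still has order $|X| < k$, and the total join equals $(V, X)$. Thus $(V, X) \in P$, a cosmall separation contradicting regularity.

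The main case is infinitely many components $\{C_i : i \in I\}$, where boundedness enters: the plan is to construct an $\aleph_0$-profile $Q \supseteq P$, contradicting boundedness. For $J \subseteq I$ set $s_J := (V\setminus C_J, C_J\cup X)$ with $C_J := \bigcup_{j\in J}C_j$; each $s_J$ has order $|X| < k$ and $s_J^* = s_{I\setminus J}$. Define $\mathcal{F} := \{J \subseteq I : s_J \in P\}$. The identity $s_{J_1}\join s_{J_2} = s_{J_1\cap J_2}$ shows via the profile property that $\mathcal{F}$ is closed under finite intersections, while the finite-case iteration applied to any finite $J$ gives $s_J^* \in P$, so $\mathcal{F}$ contains no finite set. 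Thus $\mathcal{F}$ is a non-principal ultrafilter on $I$. Use $\mathcal{F}$ to orient every finite-order separation $(A, B)$ of $G$: only finitely many $C_i$ meet the separator $A\cap B$, so the remaining components split into $J_A := \{i : C_i\subseteq A\setminus B\}$ and $J_B := \{i : C_i\subseteq B\setminus A\}$; put $(A, B) \in Q$ iff $J_B \in \mathcal{F}$.

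Consistency and the profile property of $Q$ follow routinely from the ultrafilter axioms of $\mathcal{F}$; the subtlest step, which I expect to be the main obstacle, is the containment $P \subseteq Q$. For this, suppose $(A, B) \in P$ has $J_B \notin \mathcal{F}$: then $J_A \in \mathcal{F}$, so $s_{J_A} \in P$, and since $C_{J_A} \subseteq A\setminus B$ one computes $(A, B)\join s_{J_A} = (V, B\cap X)$, which has order at most $|X| < k$. The profile property then forces $(V, B\cap X) \in P$, a cosmall separation violating regularity. Hence $P \subseteq Q$, and the existence of this $\aleph_0$-extension $Q \supseteq P$ contradicts boundedness of $P$.
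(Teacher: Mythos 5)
Your proof is correct, and the core strategy is the same as the paper's: prove existence by contradiction, assuming $P$ orients every $(V\setminus C,\,C\cup X)$ towards $C$, and then extend $P$ to an $\aleph_0$-profile $Q$, contradicting boundedness. The orientation you define is in fact \emph{identical} to the paper's: your condition ``$J_B\in\mathcal F$'' unwinds to ``$(V\setminus C_B,\,C_B\cup X)\in P$'', which, for a consistent orientation, agrees with the paper's ``$(V\setminus C_B,\,C_B\cup N(C_B))\in P$''. What you do differently is purely presentational, though arguably cleaner: you package the decision rule as a non-principal ultrafilter on the index set of components (with the profile property giving closure under finite intersections, regularity giving properness, and $s_J^*=s_{I\setminus J}$ giving the ultrafilter dichotomy), and you dispatch the finitely-many-components case up front via a direct appeal to regularity, whereas the paper folds both cases into a single computation involving the finitely many components that meet $A\cap B$. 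The ultrafilter framing buys you ``routine'' verifications of consistency and the profile property of $Q$ straight from the ultrafilter axioms, and it isolates the genuinely delicate step, $P\subseteq Q$, which you handle correctly with the observation that $(A,B)\join s_{J_A}=(V,\,B\cap X)$ is cosmall. One small point worth making explicit when writing this up: to conclude $\mathcal F$ is upward closed you should note it follows from closure under intersection together with $\emptyset\notin\mathcal F$ and the dichotomy, or directly from consistency of $P$ (since $J_1\subseteq J_2$ gives $s_{J_2}\le s_{J_1}$).
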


\begin{proof}
Suppose that $P$ contains for every component $C$ of $G-X$ the separation $(C \cup X \,,\, V \sm C)$, we are going to construct an extension of $P$ to a profile of $S_{\aleph_0}$.

To determine the appropriate orientation of a separation $\menge{A,B} \in S_{\aleph_0}$, consider the components of $G-X$ and
let $C_A$ be the union of all those components which are contained in $A \sm B$. Likewise let $C_B$ be the union of all components contained in 
$B \sm A$ and $C_R$ the union of the remaining components, i.e., those which meet both $A$ and $B$. Since each of these needs to meet $A\cap B$ there are only finitely many.

By our assumption $P$ contains for every component $C$ of $G-X$ the separation $(C \cup X \,,\, V \sm C)$.
Since $C_R$ is a union of only finitely many components of $G-X$ and since $P$ is a profile, we have $(C_R \cup X \,,\, V \sm C_R) \in P$.

We now want to prove that one of $(V\sm C_B\,,\, C_B\cup N(C_B))$ and $(V\sm C_A\,,\, C_A\cup N(C_B))$ lies in $P$.
Indeed, if this is not the case then their respective inverses are in $P$, so the profile property gives us 
\[(C_B\cup C_A\cup N(C_A\cup C_B)\,,\, V\sm (C_A\cup C_B))\in P.\] 
This however would imply that the supremum \[
    (C_B\cup C_A\cup N(C_A\cup C_B)\,,\, V\sm (C_A\cup C_B)) \,\join\, (C_R \cup X \,,\, V \sm C_R) = (V, X)
\]
lies in $P$ by the profile property and the fact that this is a separation of order $\abs{X}$.
This contradicts the regularity of $P$.

This proves that one of $(V\sm C_B\,,\, C_B\cup N(C_B))$ and $(V\sm C_A\,,\, C_A\cup N(C_B))$ lies in $P$, and by consistency we cannot have both.
We may thus define an orientation $P'$ of $S_{\aleph_0}$ by declaring that $(A,B)$ shall be in $P'$ if and only if $(V\sm C_B\,,\, C_B\cup N(C_B))$ is in $P$. This orientation is consistent since $P$ is consistent and $(A,B) \le (V\sm C_B\,,\, C_B\cup N(C_B))$. Note that $P\subseteq P'$ and it only remains to show that $P'$ is a profile.

Given $(A,B), (C,D) \in P'$ we have, by definition, $(V\sm C_B\,,\, C_B\cup N(C_B))$ and\linebreak $(V\sm C_D\,,\, C_D\cup N(C_D))$ in $P$.
The profile property of $P$ then gives us \[
    (V\sm C_B\,,\, C_B\cup N(C_B)) \,\join\, (V\sm C_D\,,\, C_D\cup N(C_D)) \in P \subseteq P',
\]
so by the consistency of $P'$ we have $((A,B) \join (C, D))^* \notin P'$.
\end{proof}

Using~\cref{lem:aleph_0-tangles} we can take the next step towards showing that the sets $ \cA_{P,P'} $ are closed by showing that for every infinite chain in $ \cA_{P,P'} $ the sequence of the separators is eventually constant:

\begin{LEM}\label{lem:chain_seps}
	Let $ P $ and $ P' $ be distinguishable regular bounded profiles in $ G $ and consider and infinite increasing sequence $ {(A_1,B_1)\le(A_2,B_2)\le\dots} $ in~$ \cA_{P,P'} $. Then the sequence of separators $ (A_i\cap B_i)_{i\in\N} $ is eventually constant.
\end{LEM}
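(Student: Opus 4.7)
Let $k=|A_i\cap B_i|$ denote the common order of the efficient distinguishers and set $X_i:=A_i\cap B_i$. The plan is to combine \Cref{lem:aleph_0-tangles}, \Cref{lem:lattice}, and Halin's \Cref{lem:finitelymany_seps}, after first aligning the orientation of the chain. Consistency of $P$ rules out the pattern $(B_i,A_i)\in P$ for some $i$ followed by $(A_j,B_j)\in P$ for some $j>i$ (otherwise $\vr=(B_i,A_i)$ and $\vs=(A_j,B_j)$ would witness $\rv\le\vs$ in $P$), so one of the two orientation classes is a cofinite tail; after passing to this tail and possibly swapping the roles of $P$ and $P'$, one may assume $(A_i,B_i)\in P$ and $(B_i,A_i)\in P'$ for every $i$. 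Both profiles then have order at least $k+1$, so \Cref{lem:aleph_0-tangles} produces unique components $C_i^P,C_i^{P'}$ of $G-X_i$ together with canonical separations $\vr_i:=(V\setminus C_i^P,C_i^P\cup X_i)\in P$ and $\vr_i':=(V\setminus C_i^{P'},C_i^{P'}\cup X_i)\in P'$. The profile property applied to $(A_i,B_i)\in P$ and $\vr_i$, combined with regularity of $P$ (excluding the cosmall join $(A_i,B_i)\vee\vr_i=(V,X_i)$ and its reverse from $P$), forces $C_i^P\subseteq B_i\setminus A_i$ and symmetrically $C_i^{P'}\subseteq A_i\setminus B_i$. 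Consistency of $P'$ then makes $\vr_i$ itself an efficient distinguisher of $P$ and $P'$.

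The combinatorial heart of the proof is an application of \Cref{lem:lattice}: since $\vr_1,\vr_j\in P$ are both efficient distinguishers, so is their meet $\vr_1\wedge\vr_j\in P$, which therefore has order exactly $k$. Its separator equals $(X_1\cup X_j)\setminus(C_1^P\cup C_j^P)$, and using the identities $X_1\cap C_1^P=X_j\cap C_j^P=\emptyset$ together with $X_1\cap C_j^P=\emptyset$ (the last from $X_1\subseteq A_1\subseteq A_j$ and $C_j^P\cap A_j=\emptyset$) this simplifies to $X_1\cup(X_j\setminus C_1^P)$, of cardinality $2k-|X_j\cap C_1^P|-|X_1\cap X_j|$. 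Setting this equal to $k$ yields the containment $X_j\subseteq X_1\cup C_1^P$ for every $j$. The same style of argument applied to pairs $\vr_i,\vr_j$ with $i<j$ additionally shows that the components form a decreasing sequence $C_1^P\supseteq C_2^P\supseteq\cdots$.

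To close the argument I would pick $u\in C_1^{P'}$ and, crucially, a vertex $v\in\bigcap_i C_i^P$. This intersection is non-empty precisely because $P$ is bounded: were it empty, the canonical separations $\vr_i$ together with the monotonicity of the $C_i^P$ would furnish an extension of $P$ to an $\aleph_0$-profile via the construction used in the proof of \Cref{lem:aleph_0-tangles}, contradicting boundedness. Each $X_i$ then separates $u$ from $v$ in $G$, and minimality as a $u,v$-separator is forced by efficiency: if $Y$ were a strictly smaller $u,v$-separator with $D$ the component of $G-Y$ containing $v$, then $v\in\bigcap_i C_i^P$ guarantees that $D$ coincides with the $P$-canonical component at separator $Y$, so $(V\setminus D,D\cup Y)\in P$ and its reverse lies in $P'$, yielding a distinguisher of order $|Y|<k$ and contradicting the efficiency of the $X_i$. \Cref{lem:finitelymany_seps} then shows that $(X_i)_{i\in\N}$ takes only finitely many values. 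Finally, if $X_i=X_j$ for some $i<j$ then for every $i\le m\le j$ we have $X_i\subseteq A_i\subseteq A_m$ and $X_j\subseteq B_j\subseteq B_m$, so $X_i=X_j\subseteq A_m\cap B_m=X_m$, with equality due to the common size $k$; hence finitely many values translates into eventual constancy.

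The main technical obstacle is the non-emptiness of $\bigcap_i C_i^P$: this is precisely where the boundedness hypothesis enters in an essential way, and it requires a careful adaptation of the $\aleph_0$-extension construction from the proof of \Cref{lem:aleph_0-tangles}.
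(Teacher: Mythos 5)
Your argument is sound and rests on the same two ingredients as the paper's proof — Halin's finiteness lemma (\cref{lem:finitelymany_seps}) and the $\aleph_0$-extension construction from the proof of \cref{lem:aleph_0-tangles} — but you reorganise the logic from a proof by contradiction into a direct argument. The paper assumes the $C_i$ strictly decrease and then derives a contradiction: Halin forces $\bigcap_i C_i = \emptyset$, while boundedness forces $\bigcap_i C_i \ne \emptyset$. You instead use boundedness first to get $\bigcap_i C_i^P \ne \emptyset$ outright, then invoke Halin to bound the number of distinct separators $X_i$, and finally supply the small combinatorial observation that once $X_i = X_j$ the separators are constant on the whole interval $[i,j]$, so finitely many values forces eventual constancy. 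That last step is genuinely yours (the paper doesn't need it because distinctness of all $X_i$ is baked into its contradiction hypothesis), and it cleanly closes a direct proof. Your \cref{lem:lattice} computation that pins down the separator of $\vr_1\wedge\vr_j$ and yields $X_j\subseteq X_1\cup C_1^P$ is more explicit than the paper, which simply asserts $C_i\supseteq C_j$, though it is slightly heavier machinery than strictly needed.

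One step that both your sketch and the paper leave implicit deserves flagging: to apply Halin you need each $X_i$ to be a $\subseteq$-minimal $u$--$v$-separator, and for this the vertex $u$ chosen from $C_1^{P'}$ must actually lie in the $P'$-canonical tight component $C_i^{P'}$ of $G-X_i$ for \emph{every} $i$, not just $i=1$ (the paper has the analogous unproven assertion $w\in C_i'$). Your efficiency-based minimality argument also silently uses this: to conclude ``its reverse lies in $P'$'' you must know which component of $G-Y$ is $P'$-canonical, and that is governed by where $u$ sits. Once $u\in C_i^{P'}$ is secured (which follows from the decrease of the dual chain, mirroring your argument for the $C_i^P$), the slicker route to minimality is that $C_i^P$ and $C_i^{P'}$ are both tight: any $x\in X_i\sm Y$ then has neighbours in both, forcing $x$'s $G-Y$-component to be simultaneously $D$ and $D_u$, which is absurd, so $Y=X_i$.
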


\begin{proof}
	By switching their roles if necessary we may assume that $ P' $ contains $ (B_1,A_1) $. Then, by consistency, $ (B_i,A_i)\in P' $ and consequently $ (A_i,B_i)\in P $ for every~$ i\in\N $.
	
	For $ i\in\N $ let us write $ X_i\coloneqq A_i\cap B_i $. By~\cref{lem:aleph_0-tangles} there is a unique component $ C_i $ of $ G-X_i $ with~$ (V\sm C_i\,,\,C_i\cup X_i)\in P $. Observe that, just like $ (A_i,B_i) $, the separation $ {(V\sm C_i\,,\,C_i\cup X_i)} $ distinguishes $ P $ and $ P' $ efficiently, and that $C_i \subseteq B_i$. This efficiency implies that~$ {N(C_i)=X_i} $. Observe further that the separations $ {(V\sm C_i\,,\,C_i\cup X_i)} $ form an increasing chain in~$ \cA_{P,P'} $ whose sequence of separators is~$ (A_i\cap B_i)_{i\ge\N} $. In fact $ C_i\supseteq C_j $ for~$ i\le j $. It thus suffices to show that the sequence of the $ C_i $ is eventually constant. So suppose for a contradiction that the sequence of the $ C_i $ is strictly decreasing.
	
	Let us first show that $ \bigcap_{i\in\N}C_i $ is empty. If not there is a vertex~$ v\in\bigcap_{i\in\N}C_i $. By applying~\cref{lem:aleph_0-tangles} to $ P' $ and each $ X_i $, we obtain components $ C_i' $ of $ G-X_i $ such that $ (V\sm C_i'\,,\,C_i'\cup X_i)\in P' $ for all~$ i\in\N $. Clearly $ C_i\ne C_i' $ and $ N(C_i')=X_i $ for all~$ i\in\N $. Fix any $ w\in C_1' $. Then $ w\in C_i' $ for every $ i\in\N $, and each $ X_i $ is a minimal $ v $-$ w $-separator in~$ G $. The latter contradicts~\cref{lem:finitelymany_seps} by the assumption that all~$ C_i $, and hence all~$ X_i $, are distinct.
	
	Thus $ \bigcap_{i\in\N}C_i $ is indeed empty. Let us define an orientation $ \tilde{P} $ of $ S_{\aleph_0} $ and show that $ \tilde{P} $ is a profile extending $ P $, obtaining a contradiction. Let $ \tilde{P} $ consist of all $ (A,B)\in\vS_{\aleph_0} $ for which there is a $ C_i $ with~$ C_i\sub B\sm A $. Since the $ C_i $ form a decreasing sequence of connected vertex sets with $ \bigcap_{i\in\N}C_i=\emptyset $, and $ A\cap B $ is finite, this defines an orientation of~$ S_{\aleph_0} $. Moreover it is easy to see that $ \tilde{P} $ is consistent and has the profile property.
	
	To obtain the desired contradiction it is thus left to check that~$ P\sub\tilde{P} $. However any $ (A,B)\in P $ with $ C_i\sub A\sm B $ for some $ i\in\N $ would be inconsistent with $ {(V\sm C_i\,,\,C_i\cup X_i)\in P} $. Therefore $ \tilde{P} $ is an extension of~$ P $, contrary to the assumption that $ P $ is bounded.
\end{proof}

Moreover, we can even show that the same statement also holds not only for chains, but even for the entire set $\cA_{P,P'}$:

\begin{LEM}\label{lem:finitely_different_separators}
	Let $ P $ and $ P' $ be distinguishable regular bounded profiles in $ G $. Then the separations $ (A,B)\in\cA_{P,P'} $ have only finitely many distinct separators $A\cap B$.
\end{LEM}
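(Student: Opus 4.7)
The plan is to show that every separator $A\cap B$ of a separation $(A,B)\in\cA_{P,P'}$ is a minimum $v$--$w$-separator in~$G$ for two \emph{fixed} vertices $v,w$ chosen once and for all; the finiteness then follows immediately from \cref{lem:finitelymany_seps} applied to $v$, $w$, and $k:=|A_0\cap B_0|$ (the common order of the separations in $\cA_{P,P'}$).

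For the setup, fix any $(A_0,B_0)\in\cA_{P,P'}$ with $(A_0,B_0)\in P$ and set $X_0:=A_0\cap B_0$. By~\cref{lem:aleph_0-tangles} there is a unique component $C_0$ of $G-X_0$ with $(V\sm C_0,C_0\cup X_0)\in P$, and analogously a component $C'_0$ of $G-X_0$ with $(V\sm C'_0,C'_0\cup X_0)\in P'$. Just as in the proof of~\cref{lem:chain_seps}, efficiency forces $N(C_0)=N(C'_0)=X_0$ and both of these canonical separations again lie in $\cA_{P,P'}$. Pick $v\in C_0$ and $w\in C'_0$.

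The main step is then to prove: \emph{for every} $(A,B)\in\cA_{P,P'}$, $X:=A\cap B$ is a minimum $v$--$w$-separator. Orient $(A,B)$ into $P$ and invoke~\cref{lem:aleph_0-tangles} once more to get components $C_X,C'_X$ of $G-X$ with $(V\sm C_X,C_X\cup X)\in P$, $(V\sm C'_X,C'_X\cup X)\in P'$, $N(C_X)=N(C'_X)=X$, and these canonical distinguishers again in $\cA_{P,P'}$. I need to show (i) $v\in C_X$ and $w\in C'_X$, and (ii) no $v$--$w$-separator has size below $|X|=k$. For (i) apply~\cref{lem:lattice} to the pair $(V\sm C_0,C_0\cup X_0),(V\sm C_X,C_X\cup X)\in P$ to place their meet $(V\sm(C_0\cup C_X),\,C_0\cup C_X\cup X_0\cup X)$ in $\cA_{P,P'}\cap P$, and compare it via the profile property and the uniqueness clause of~\cref{lem:aleph_0-tangles} with the canonical separation at its separator; this forces the $P$-component of that meet to contain $C_0$, and in particular~$v$, which (again by uniqueness in~\cref{lem:aleph_0-tangles}) lands $v$ in $C_X$. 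The argument for $w\in C'_X$ is symmetric using $P'$. For (ii), given a $v$--$w$-separator $Y$ of size ${<}k$, the separation $(V\sm D_w\,,\,D_w\cup Y)$ with $D_w$ the component of $w$ in $G-Y$ has order ${<}k$; using that $v\in C_0$ with $(V\sm C_0,C_0\cup X_0)\in P$ and $w\in C'_0$ with $(V\sm C'_0,C'_0\cup X_0)\in P'$, consistency pins down that this separation is oriented into $P$ and its inverse into $P'$, so it distinguishes $P$ from $P'$ with order below $k$, contradicting efficiency.

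The expected main obstacle is step~(i): the careful bookkeeping needed to conclude $v\in C_X$ from the meet living in $\cA_{P,P'}\cap P$. The comparison of the canonical separation at the meet's separator with the meet itself, and tracking which component contains $v$ under~\cref{lem:aleph_0-tangles}, is where one must be most careful not to lose $v$ to an auxiliary component of $G-X$ distinct from $C_X$ and $C'_X$; here one uses that any such auxiliary component $D\ne C_X$ would, by uniqueness in~\cref{lem:aleph_0-tangles}, force $(D\cup X,V\sm D)\in P$, which is then incompatible with $(V\sm C_0,C_0\cup X_0)\in P$ via the profile property once $v$ is placed in~$D$.
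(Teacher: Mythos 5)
Your plan hinges on the claim in step~(i) that a vertex $v$ chosen once and for all in $C_0$ lies in $C_X$ for \emph{every} $(A,B)\in\cA_{P,P'}$. This is false, and the argument you sketch does not fix it. The obstruction you anticipate (an auxiliary component $D\neq C_X$ of $G-X$) is not the real danger: the danger is that $v$ can lie in the separator $X$ itself, so that $X$ is not a $v$--$w$-separator at all. For a concrete picture, take two large cliques $K$, $K'$ joined by two disjoint bridges $a_1b_1$ and $a_2b_2$ with $a_1,a_2$ complete to $K$ and $b_1,b_2$ complete to $K'$. Then $\{a_1,b_2\}$ and $\{a_2,b_1\}$ are both separators of separations that efficiently distinguish the profiles pointing to $K$ and to $K'$; the $P$-side component of $\{a_1,b_2\}$ is $K\cup\{a_2\}$, so if you pick $v=a_2\in C_0$, then for $X=\{a_2,b_1\}$ you get $v\in X$, not $v\in C_X$. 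Your meet argument does establish that the $P$-component $C_Y$ of $G-Y$ (for $Y$ the separator of the meet) contains both $C_0$ and $C_X$, but that only puts $v$ in $C_Y$, not in $C_X$: the set $C_Y$ is a component of $G-Y$, whereas $C_X$ is a component of $G-X$, and $v\in C_Y$ is perfectly compatible with $v\in X$. The final sentence ``again by uniqueness in \cref{lem:aleph_0-tangles} lands $v$ in $C_X$'' has no justification. Step~(ii) inherits the same flaw, since it likewise requires knowing which component of $G-Y$ the profiles point to, for which you again implicitly use $v\in C_Y$ with $v$ fixed from $(A_0,B_0)$.

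The paper's proof avoids exactly this pitfall. It argues by contradiction against a hypothetical infinite sequence of distinct separators, and it does \emph{not} pick $v$ from the first separation. Instead it passes to the suprema of initial segments of the sequence, uses \cref{lem:chain_seps} to see that the resulting chain of $P$-side components $C_i$ stabilises, and picks $v$ from the non-empty intersection $C=\bigcap_i C_i$. That intersection is ``deep enough'' to guarantee $C\subseteq C_i\subseteq\tilde C_i$ for every $i$, which is what makes every $A_i\cap B_i$ a $\subseteq$-minimal $v$--$w$-separator. The choice of $v$ from a single separation has no such depth guarantee, and trying to prove directly that a single $v$ works for \emph{all} of $\cA_{P,P'}$ would require knowing that $\bigcap_{(A,B)\in\cA_{P,P'}}\tilde C_{A\cap B}$ is non-empty, which is essentially the statement of the lemma itself.
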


\begin{proof}
	Suppose for a contradiction that $ \cA_{P,P'} $ contains an infinite sequence of separations $ (A_1,B_1),(A_2,B_2),\dots $ whose separators $ A_i\cap B_i $ are pairwise distinct. We may assume without loss of generality that $ (A_i,B_i)\in P $ for every~$ i\in\N $. By~\cref{lem:lattice} $ P $ contains all finite joins of these separations. For each $ i\in\N $ let $ X_i $ be the separator of the supremum of $ (A_1,B_1) $ up to $ (A_i,B_i) $, and let $ C_i $ be the component of $ G-X_i $ with $ (V\sm C_i\,,\,C_i\cup X_i)\in P $ as given by~\cref{lem:aleph_0-tangles}. By~\cref{lem:chain_seps} the sequence of the~$ X_i $ is eventually constant, and therefore so is the sequence of the~$ C_i $ as $P$ is a profile. Let~$ C\coloneqq\bigcap_{i\in\N}C_i\ne\emptyset $.
	
	Analogously for $ P' $ let $ X_i' $ be the separator of the supremum of $ (B_1,A_1) $ up to $ (B_i,A_i) $ and let $ C_i' $ be the component of $ G-X_i' $ with $ (V\sm C_i'\,,\,C_i'\cup X_i')\in P' $. As before let~$ C'\coloneqq\bigcap_{i\in\N}C_i'\ne\emptyset $. 
	
	Since $ C_1 $ and $ C_1' $ are disjoint so are $ C $ and~$ C' $. Fix vertices $ v\in C $ and $ w\in C' $. We claim that every separator $ A_i\cap B_i $ is a minimal $ v $-$ w $-separator in $ G $, contradicting the assertion of~\cref{lem:finitelymany_seps}. To see this, consider $ A_i\cap B_i $ for some $ i\in\N $. Let $ \tilde{C}_i $ and $ \tilde{C}_i' $ be the components of $ G-(A_i\cap B_i) $ obtained by applying~\cref{lem:aleph_0-tangles} with $ P $ and $ P' $, respectively. Then $ C\sub C_i\sub\tilde{C}_i $ and likewise $ C'\sub C_i'\sub\tilde{C}_i' $, giving $ v\in\tilde{C}_i $ and~$ w\in\tilde{C}_i' $. Moreover both $ \tilde{C}_i $ and $ \tilde{C}_i' $ have all of $ A_i\cap B_i $ as their neighbourhood as $(A_i,B_i)$ efficiently distinguishes $P$ and $P'$, and hence $ A_i\cap B_i $ is indeed a minimal $ v $-$ w $-separator in~$ G $.
\end{proof}

We are now ready to prove~\cref{prop:closed}, i.e. that the sets $ \cA_{P,P'} $ are closed subsets of~$ \vU $:

\begin{proof}[Proof of \cref{prop:closed}]
	Let two distinguishable regular bounded profiles $ P $ and $ P' $ in $ G $ be given. By~\cref{lem:finitely_different_separators} only finitely many sets, say $ X_1,\dots,X_n $, appear as separators of separations in~$ \cA_{P,P'} $. For each $ X_i $ let $ C_i $ and $ C_i' $ be the two components of $ G-X_i $ given by applying~\cref{lem:aleph_0-tangles} to $ X_i $ for $ P $ and $ P' $, respectively.
	
	We are now able to give a complete description of the set~$ \cA_{P,P'} $: it is easy to check that a separation $ (A,B)\in\vS_{\aleph_0} $ distinguishes $ P $ and $ P' $ efficiently if and only if $ A\cap B=X_i $ for some $ i $ with one of $ C_i $ and $ C_i' $ being a subset of $ A $ and the other a subset of~$ B $.
	
	For each $ X_i $, the set of all $ (A,B)\in\vU $ with $ A\cap B=X_i $ as well as $ C_i\sub A $ and $ C_i'\sub B $ is closed by~\cref{lem:profclosed}. Likewise the set of all $ (A,B)\in\vU $ with separator $ X_i $ as well as $ C_i'\sub A $ and $ C_i\sub B $ is closed, too. Therefore $ \cA_{P,P'} $ is the union of finitely many closed subsets of $ \vU $ and hence closed.
\end{proof}

Having established that the sets $ \cA_{P,P'} $ are closed in $ \vU $, it thus remains for us to verify that the family of the sets $ \cA_{P,P'} $ splinters in order to deduce~\cref{thm:profinite_johannes} from~\cref{thm:profinite}. Since we shall need a slightly stronger property than splintering at a later point in~\cref{sec:applications_thinly}, we will prove this stronger assertion here. In particular we will not make use of the assumptions that the profiles in $ \cP $ are regular and bounded.

To show that the sets $ \cA_{P,P'} $ splinter, we need to show that for all $ (A,B)\in\cA_{P,P'} $ and $ (C,D)\in\cA_{Q,Q'} $, either some corner separation of $ (A,B) $ and $ (C,D) $ lies in $ \cA_{P,P'} $ or in $ \cA_{Q,Q'} $, or one of $ (A,B) $ and $ (C,D) $ lies in both $ \cA_{P,P'} $ and $ \cA_{Q,Q'} $. In fact we will show that the first option always occurs.

We will split our proof of this into two separate lemmas, distinguishing the cases of equal and of distinct order of $ (A,B) $ and $ (C,D) $.

Let us first deal with the case that $ (A,B) $ is of strictly lower order than $ (C,D) $. In this case we can say precisely which of $ \cA_{P,P'} $ and $ \cA_{Q,Q'} $ will contain a corner separation of $ (A,B) $ and $ (C,D) $:

\begin{LEM}\label{lem:findcorner1}
	Let $ (A,B)\in\cA_{P,P'} $ and $ (C,D)\in\cA_{Q,Q'} $ with $ \abs{(A,B)}<\abs{(C,D)} $. Then some corner separation of $ (A,B) $ and $ (C,D) $ lies in~$ \cA_{Q,Q'} $.
\end{LEM}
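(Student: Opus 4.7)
The plan is to first reduce to a canonical orientation of the separations involved. Set $k := |(C,D)|$. Since $(A,B)$ and $(C,D)$ distinguish $P,P'$ and $Q,Q'$ respectively, I may assume without loss of generality that $(A,B)\in P$ (so $(B,A)\in P'$) and $(C,D)\in Q$ (so $(D,C)\in Q'$). Moreover, because $|(A,B)|<k$ and $k$ is the minimum order of a $Q,Q'$-distinguisher, the separation $(A,B)$ cannot distinguish $Q$ from $Q'$, so I may further assume $(A,B)\in Q\cap Q'$.

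With this setup, the next step is to verify that two of the four corners distinguish $Q$ from $Q'$. Denote the four oriented corners by $c_1=(A,B)\join(C,D)$, $c_2=(A,B)\join(D,C)$, $c_3=(B,A)\join(C,D)$ and $c_4=(B,A)\join(D,C)$. Applying the profile property in $Q$ to $(A,B),(C,D)\in Q$ gives $c_1^*\notin Q$, hence $c_1\in Q$. On the other hand, $c_1\geq(C,D)$ together with $(D,C)\in Q'$ and the consistency of $Q'$ forces $c_1^*\in Q'$, so $c_1$ distinguishes $Q,Q'$. Symmetrically $c_2^*\in Q$ and $c_2\in Q'$, so $c_2$ also distinguishes $Q,Q'$. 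By the efficiency of $(C,D)$ for $Q,Q'$, this yields $|c_1|\geq k$ and $|c_2|\geq k$.

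The main step is to use the orientation of $(C,D)$ in $P'$ together with the efficiency of $(A,B)$ in $\cA_{P,P'}$ to obtain the matching upper bound via submodularity. Distinguish two cases. If $(D,C)\in P'$, the profile property applied in $P'$ to $(B,A),(D,C)\in P'$ yields $c_4\in P'$, while consistency in $P$ with $(A,B)\in P$ (note $c_4\geq(B,A)$) forces $c_4^*\in P$; hence $c_4$ distinguishes $P,P'$, and efficiency of $(A,B)$ gives $|c_4|\geq|(A,B)|$. The submodular inequality $|(A,B)|+|(C,D)|\geq|c_1|+|c_4|$ then forces $|c_1|\leq k$, and combining with $|c_1|\geq k$ yields $|c_1|=k$, so $c_1\in\cA_{Q,Q'}$. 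If instead $(C,D)\in P'$, the symmetric argument with $c_3$ in place of $c_4$ shows $c_2\in\cA_{Q,Q'}$.

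The main obstacle is spotting this precise interplay: one must use the lower-order separation $(A,B)$'s efficiency in $P,P'$ as a lower bound on one of the ``complementary'' corners $c_3$ or $c_4$, and then invoke submodularity to drive the corresponding distinguishing corner $c_2$ or $c_1$ of $Q,Q'$ down to the efficient order $k$. Once one realises that the profile property and consistency pin down the orientation of each corner in each of the four profiles (with only the orientation of $(C,D)$ in $P'$ as a genuine free parameter), the proof reduces to the short case split above.
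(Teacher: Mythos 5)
Your proposal has the right overall shape---normalise orientations, bound the orders of the corners from below using $Q,Q'$, then bound one of them from above via submodularity using a matching lower bound coming from the efficiency of $(A,B)$ for $P,P'$. That is essentially the paper's strategy. The minor glosses about whether the various profiles actually orient the corners in question can all be repaired by phrasing the order bounds as proofs by contradiction (if the order were too small, the profile \emph{would} orient the corner, and the rest of the argument goes through), so I won't dwell on those.

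The genuine gap is in your case split. You distinguish ``$(D,C)\in P'$'' from ``$(C,D)\in P'$'' and treat this as exhaustive, but there is a third possibility: $P'$ may not orient $\{C,D\}$ at all. The profiles $P,P'$ are only guaranteed to orient separations up to just above $\abs{(A,B)}$, whereas $\abs{(C,D)}>\abs{(A,B)}$; if $P,P'$ are $m$-profiles with $\abs{(A,B)}<m\le\abs{(C,D)}$, then $\{C,D\}\notin S_m$ and your profile-property argument for $c_3$ or $c_4$ has nothing to fire on. This is precisely where the paper invokes \emph{robustness} of $P'$: the robustness condition quantifies over all $\vt\in\vU$, not just those that $P'$ orients, so with $\vs=(B,A)\in P'$ and $\vt=(C,D)$ it forces one of $(B,A)\join(C,D)$ and $(B,A)\join(D,C)$ into $P'$ whenever both have order below $\abs{(A,B)}$, regardless of whether $P'$ sees $\{C,D\}$ at all. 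That single application of robustness replaces your two cases and covers the missing one. So to close the gap, replace the case split by an appeal to the robustness hypothesis on the profiles in $\cP$ (which is present in the statement's context); without it, the argument does not cover all configurations of orders.
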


\begin{proof}
Since $|(A,B)|<|(C,D)|$ it follows that both $Q$ and $Q'$ orient $\{A,B\}$ the same, say~${(A,B)\in Q\cap Q'}$. If ${|(A,B) \join (C,D)|\le |(C,D)|}$ or ${|(A,B) \join (D,C)|\le |(C,D)|}$, it follows that this corner separation efficiently distinguishes $Q$ and $Q'$ by~\cref{lem:fish}, so suppose that this is not the case. Then submodularity implies that ${|(B,A) \join (C,D)|< |(A,B)|}$ and ${|(B,A) \join (D,C)|< |(A,B)|}$, which in turn contradicts the efficiency of $(A,B)$, since one of $(B,A) \join (C,D)$ and $(B,A) \join (D,C)$ would also distinguish the two robust profiles $ P $ and~$ P' $.
\end{proof}

For separations $ r $ and $ s $ the corner separations given by $ \vr\join\vs $ and $ \rv\join\sv $ (as well as their underlying unoriented separations) are referred to as \emph{opposite} corner separations.

The second case is that $ (A,B) $ and $ (C,D) $ are of equal order. Here we can show that there are two opposite corner separations of $ (A,B) $ and $ (C,D) $ that lie in $ \cA_{P,P'} $ or in~$ \cA_{Q,Q'} $:

\begin{LEM}\label{lem:findcorner2}
	Let $ (A,B)\in\cA_{P,P'} $ and $ (C,D)\in\cA_{Q,Q'} $ with $ \abs{(A,B)}=\abs{(C,D)} $. Then there is either a pair of two opposite corner separations of $ (A,B) $ and $ (C,D) $ with one element in $ \cA_{P,P'} $ and one in $ \cA_{Q,Q'} $, or else there are two pairs of opposite corner separations of $ (A,B) $ and $ (C,D) $, the first with both elements in $ \cA_{P,P'} $ and the second with both elements in~$ \cA_{Q,Q'} $.
\end{LEM}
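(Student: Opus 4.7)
The plan is a case analysis on how the four profiles $P,P',Q,Q'$ orient the unoriented separations $\{A,B\}$ and $\{C,D\}$; write $\vs=(A,B)$ and $\vt=(C,D)$, and assume $\vs\in P$, $\sv\in P'$, $\vt\in Q$, $\tv\in Q'$. I split into three principal cases, depending on whether $\{C,D\}$ also distinguishes $P,P'$ and whether $\{A,B\}$ also distinguishes $Q,Q'$.

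In the \emph{symmetric case}, where both $\{C,D\}$ distinguishes $P,P'$ and $\{A,B\}$ distinguishes $Q,Q'$, some orientation of $\vt$ lies in $P$ alongside $\vs$ with both efficiently distinguishing $P,P'$; \cref{lem:lattice} then produces the corresponding join and meet as efficient distinguishers of $P,P'$, hence a pair of opposite corners in $\cA_{P,P'}$. A symmetric argument yields a pair of opposite corners in $\cA_{Q,Q'}$. Depending on the specific signs --- which orientation of $\{A,B\}$ is in $Q$ and which of $\{C,D\}$ is in $P$ --- the two applications of \cref{lem:lattice} either deliver the same opposite pair in both $\cA_{P,P'}$ and $\cA_{Q,Q'}$ (satisfying~(a)) or deliver the two distinct opposite pairs (satisfying~(b)).

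In the \emph{intermediate case}, suppose $\{A,B\}$ distinguishes $Q,Q'$ but $\{C,D\}$ does not distinguish $P,P'$, so $\vt\in P\cap P'$ (the other half of this case is symmetric). I apply \cref{lem:lattice} to $Q$, which contains both $\vs$ and $\vt$ as efficient distinguishers, to obtain $\vs\join\vt,\ \vs\meet\vt\in\cA_{Q,Q'}$ of order~$k$. The profile property applied to $\vs,\vt\in P$ yields $\vs\join\vt\in P$, while the consistency of $\sv\in P'$ with $\vs\le\vs\join\vt$ forces $\vs\join\vt\notin P'$, so $\vs\join\vt\in\cA_{P,P'}$. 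On the other hand, consistency with $\vs\in P$ and with $\vt\in P'$ puts $\vs\meet\vt$ into both $P$ and $P'$, so $\vs\meet\vt$ does not distinguish $P,P'$. The opposite pair of unoriented corners $\{\vs\join\vt,\ \sv\join\tv\}$ then has $\vs\join\vt\in\cA_{P,P'}$ and $\sv\join\tv\in\cA_{Q,Q'}$, yielding conclusion~(a).

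In the \emph{hard case}, neither $\{A,B\}$ distinguishes $Q,Q'$ nor $\{C,D\}$ distinguishes $P,P'$: after possibly swapping signs, $\vs\in Q\cap Q'$ and $\vt\in P\cap P'$. Here \cref{lem:lattice} is unavailable, and I determine each corner's orientation in each profile directly. Consistency of $\vs\in P$ with $\sv\le\sv\join\vt$ forces $\sv\join\vt\notin P$, while the profile property applied to $\sv,\vt\in P'$ gives $\sv\join\vt\in P'$; so $\sv\join\vt$ distinguishes $P,P'$. But $\vs\in Q\cap Q'$ forces $\sv\join\vt\notin Q\cup Q'$, so it does not distinguish $Q,Q'$. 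Symmetrically $\vs\join\tv$ distinguishes $Q,Q'$ but not $P,P'$. Their orders are pinned to $k$ by the submodularity inequality $|\vs\join\tv|+|\vs\meet\tv|\le 2k$, using $|\vs\meet\tv|=|\sv\join\vt|$ together with the efficiency bounds $|\sv\join\vt|\ge k$ and $|\vs\join\tv|\ge k$. The opposite pair $\{\sv\join\vt,\ \vs\join\tv\}$ then satisfies conclusion~(a). The hard case is the main obstacle: it requires tracking sign choices across all four profiles without the \cref{lem:lattice} shortcut, combining the profile property, consistency, submodularity, and efficiency of the original distinguishers.
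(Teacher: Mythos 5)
Your proof is correct, and it reaches the conclusion by a genuinely different case decomposition than the paper's. You split on which of $\{A,B\}$, $\{C,D\}$ distinguishes the \emph{other} pair of profiles (both, exactly one, or neither), whereas the paper first splits on whether some profile from $\{P,P'\}$ agrees with some profile from $\{Q,Q'\}$ on the orientations of \emph{both} $\{A,B\}$ and $\{C,D\}$, and only then considers the orientations of $P'$ and $Q'$. The overlap is that your hard case coincides exactly with the paper's subcase ``$(C,D)\in P'$ and $(A,B)\in Q'$'', and your argument there (profile property plus consistency on the raw corners, then pin the orders to $k$ via submodularity and efficiency) is essentially the paper's; but your symmetric case merges the paper's ``no two agree'' case with the double-distinguishing portions of its first case, and your intermediate case re-organises the remaining portions of that first case. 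Both decompositions use the same tool kit — \cref{lem:lattice}, the \hyperref[property:P]{profile property}, consistency, and submodularity — so the gain of your version is chiefly conceptual: it asks the more natural question ``which of the two efficient distinguishers also separates the other pair?''

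Two points of polish. First, in both the intermediate and the hard case you silently normalise signs (``$\vt\in P\cap P'$'', ``$\vs\in Q$'', ``$\vs\in Q\cap Q'$ and $\vt\in P\cap P'$''); these WLOG relabellings of $A\leftrightarrow B$, $C\leftrightarrow D$, $P\leftrightarrow P'$, $Q\leftrightarrow Q'$ are legitimate but should be stated, especially since flipping $A\leftrightarrow B$ also forces a swap of $P$ and $P'$ to preserve $\vs\in P$. Second, in the hard case the logical order is slightly off: you first apply consistency and the profile property to place $\sv\join\vt$ in $P'\smallsetminus P$, and only afterwards bound its order. But a $k$-profile only orients separations of order below its bound, so you must first know $\abs{\sv\join\vt}$ is small enough. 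The clean fix is to observe from submodularity that at least one of $\abs{\sv\join\vt}$, $\abs{\vs\join\tv}$ is $\le k$, run your consistency/profile-property argument on that one, conclude via efficiency that its order is exactly $k$, and then use submodularity again to get the other order $\le k$ before repeating the argument; this is the implicit content of the paper's remark ``by submodularity and the efficiency of $(A,B)$ and $(C,D)$ both of these corner separations have order exactly $\abs{(A,B)}$.''
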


\begin{proof}
	From $|(A,B)|=|(C,D)|$ it follows that $ P $ and $ P' $ both orient $ \{C,D\} $, and likewise that $ Q $ and $ Q' $ both orient~$ \{A,B\} $.
	
	Let us first treat the case that one of $ P $ and $ P' $ orients both $ \{A,B\} $ and $ \{C,D\} $ in the same way as one of $ Q $ and~$ Q' $ does. So suppose that, say, both $ P $ and $ Q $ contain $ (A,B) $ as well as~$ (C,D) $. 
	
	If $ P' $ contains $ (D,C) $, then $ (C,D)\in\cA_{P,P'} $ and~\cref{lem:lattice} gives~$ {(A,B)\join(C,D)\in\cA_{P,P'}} $ and~$ {(B,A)\join(D,C)\in\cA_{P,P'}} $.  Thus by~\cref{property:P} we also have $ (A,B)\join(C,D)\in\cA_{Q,Q'} $, producing the desired pair of opposite corner separations. If $ Q' $ contains $ (B,A) $ we argue analogously.
	
	So suppose that $ (C,D)\in P' $ and~$ (A,B)\in Q' $. Then $ (B,A)\join(C,D)\in P' $ and $ (A,B)\join(D,C)\in Q' $ by the profile property, since by submodularity and the efficiency of $ (A,B) $ and $ (C,D) $ both of these corner separations have order exactly~$ \abs{(A,B)} $. These two separations, then, are opposite corner separations of $ (A,B) $ and $ (C,D) $ with the first lying in $ \cA_{P,P'} $ and the second lying in~$ \cA_{Q,Q'} $.
	
	The remaining case is that no two of the four profiles agree in their orientation of $ \{A,B\} $ and~$ \{C,D\} $. But then both of $ (A,B) $ and $ (C,D) $ lie in $ \cA_{P,P'} $ as well as in~$ \cA_{Q,Q'} $, and the existence of two pairs of opposite corner separations, one with both elements in $ \cA_{P,P'} $ and one with both in $ \cA_{Q,Q'} $, follows from~\cref{lem:lattice} and the disagreement of the four profiles on $ \{A,B\} $ and~$ \{C,D\} $.
\end{proof}

We now have all the ingredients necessary for a proof of~\cref{thm:profinite_johannes}:

\begin{proof}[Proof of \cref{thm:profinite_johannes}]
By \cref{prop:closed}, we can apply \cref{thm:profinite}. Thus we only need to show that the collection of these sets $\cA_{P,P'}$ splinters. However, this follows from \cref{lem:findcorner1} and \cref{lem:findcorner2}.
\end{proof}

We remark that even in locally finite graphs it is not generally possible to find a \emph{tree-decomposition} which efficiently distinguishes all the distinguishable robust regular bounded profiles, as witnessed by the following example:
\begin{EX}\label{ex:notd}

\begin{figure}[ht]
    \includegraphics[width=\linewidth]{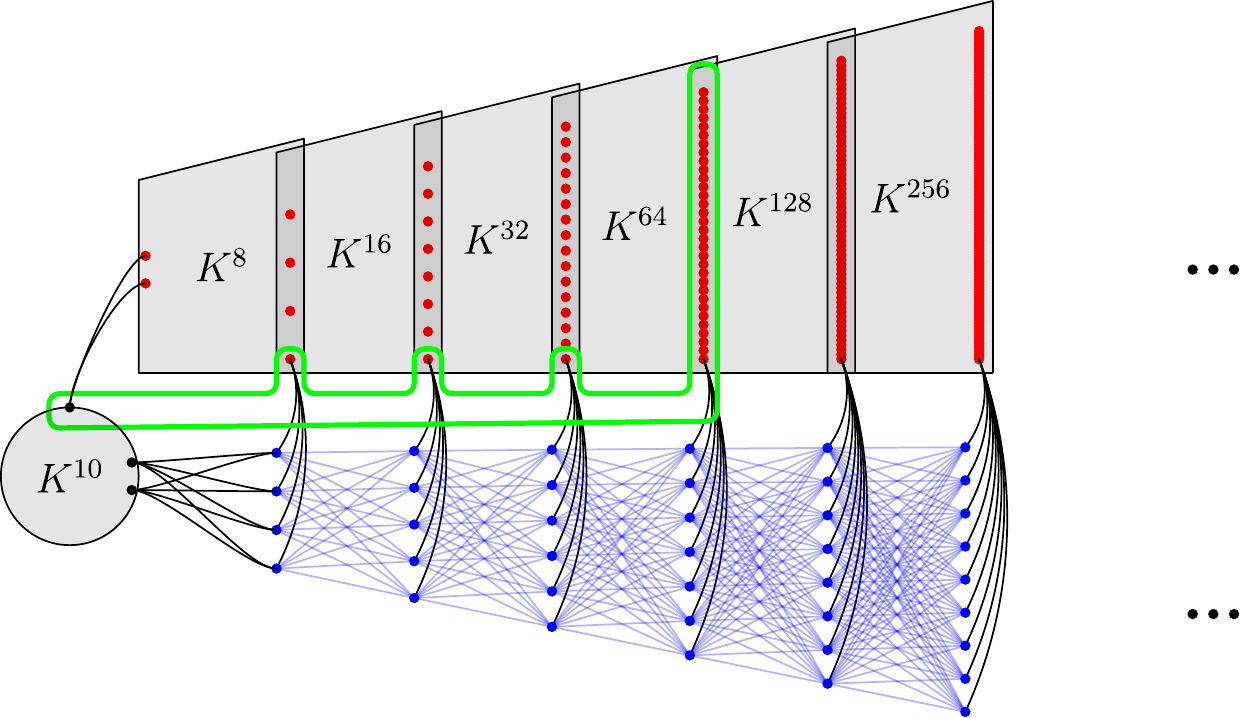}
    \caption{A locally finite graph where no tree-decomposition distinguishes all the robust regular bounded profiles efficiently. The green separator is the one of the only separation which efficiently distinguishes the profile induced by the~$K^{64}$ from the profile induced by the~$K^{128}$.}
    \label{fig:no_efficient_td}
\end{figure}
Consider the graph displayed in \cref{fig:no_efficient_td}. This graph is constructed as follows: for every $n\in \Nbb$ pick a copy of $K^{2^{n+2}}$ together with $n+3$ vertices $w_1^n,\dots,w_{n+3}^n$. Pick $2^{n}$ vertices of the $K^{2^{n+2}}$  and call them $u_1^n,\dots,u_{2^n}^n$. Additionally, pick $2^{n+1}$ vertices from  $K^{2^{n+2}}$, disjoint from the set of $u_i^n$,  and call them $v_1^n,\dots,v_{2^{n+1}}^n$. Now identify $u_i^{n+1}$ with $v_i^n$ and add edges between every $w_i^n$ and every $w_j^{n+1}$ as well as between $w_i^n$ and~$v_1^n=u_1^{n+1}$.

Finally we pick one copy of $K^{10}$ and join one vertex $v_1^0$ of this $K^{10}$ to $u_1^1$ and $u_1^2$. Additionally we pick two vertices $w_1^0,w_2^0$ which are distinct from $v_1^0$  from this $K^{10}$ and add an edge between each $w_i^0$ and each $w_j^1$.

Now each of the chosen $K^{2^{n+2}}$ induces a robust profile $P_n$ of order~$\frac{2}{3}\cdot2^{n+1}$ which obviously is regular and bounded. The only separation which efficiently distinguishes $P_n$ and $P_{n+1}$ is the separation $s_n$ with separator~${\{v_1^i\mid i<n\}\cup\{u_j^{n+1}\}}$.

Additionally, the $K^{10}$ induces a robust profile $P_0$ of order $4$. However the only separation that efficiently distinguishes  $P_0$ and $P_1$ has the separator~$\{v_1^0,w_1^0,w_2^0\}$. But these separations $s_1,s_2,...,$ and $s_0$ can be oriented such as to form a chain of order type~$ \omega+1 $. This chain witnesses that there cannot be a tree-decomposition which distinguishes all regular bounded profiles efficiently: the separations given by such a tree-decomposition would have to contain this chain of order type~$ \omega+1 $ which is not possible as every chain in the edge tree set of a tree has length at most $\omega$, cf.\ \cref{lem:TreesetTD}.
\end{EX}

\section{The thin splinter lemma}\label{sec:thinly}
In this section we take a different approach to generalising the finite splinter lemma into an infinite setting. Unlike \cref{thm:profinite}, the result we are going to prove does not require our universe to be `closed'. Instead we will require that the separations involved do not, in a sense, cross too badly in that they cross only finitely many separations of lower order.

This will allow us to choose separations that minimise the number of separations crossing them, an idea which also appeared in Carmesin's original proof of \cref{thm:Johannes} in \cite{carmesinhalinconj}, as well as in \cite{carmesin2020canonical} and our proof of the canonical spinter theorem for finite separation systems in \cite{FiniteSplinters}. However, our theorem here applies to a more general setting and will allow us directly to deduce Carmesin's theorem for locally finite graphs. 

In order to also be able to deduce the full \cref{thm:Johannes} for arbitrary graphs, we will state our theorem in more generality here: not as a theorem about nestedness and separations, but as a theorem about a general nestedness-like relation. This allows us to apply the theorem in \cref{sec:application_infinite} not to separations directly, where it would fail, but to substitute separators as a proxy giving our \cref{cor:nested_separators}. From this result we will retrieve the separations for our proof of \cref{thm:Johannes} in \cref{sub:johannes}, but we will also build from this a tree of tree-decompositions to deduce \cref{thm:jmb} in \cref{sub:totd}.

The statement of our \cref{thm:thinly} is also inspired by our canonical splinter lemma for the finite setting in \cite{FiniteSplinters}, and it will too result in a \emph{canonical} nested set, a set which is invariant under isomorphisms.

So let $ \cA $ be some set and $ \sim $ a reflexive and symmetric binary relation on~$ \cA $. In analogy to our terminology for separation systems, we say that two elements $ a $ and $ b $ of $ \cA $ are \emph{nested} if~$ a\sim b $. Elements of $ \cA $ that are not nested \emph{cross}. As usual, a subset of $ \cA $ is nested if all of its elements are pairwise nested, and a single element is nested with a set $ N $ if it is nested with every element of~$ N $.

In an abuse of notation, given elements $ a $ and $ b $ of $ \cA $, we call $ c\in\cA $ a \emph{corner} of $ a $ and $ b $ if every element of~$ \cA $ crossing $ c $ also crosses one of $ a $ and~$ b $. Observe that with this definition corners of elements of $ \cA $ exhibit the same behaviour as was asserted by~\cref{lem:fish} for corner separations. However, in contrast to the terminology of separation systems, we do not insist here that a corner of $ a $ and $ b $ is itself nested with both $ a $ and~$ b $. This distinction will become relevant in \cref{sec:application_infinite}.

Now let $ \family{\cA_i\mid i\in I} $ be a family of non-empty subsets of $ \cA $ and $ \abs{\;\;}\colon I\to\N_0 $ some function, where $ I $ is a possibly infinite index set. We shall think of $ \abs{i} $ as the order of the elements of~$ \cA_i $. For an $ a\in\cA $ and $ k\in\N_0 $ the $ k $-\emph{crossing number} of~$ a $ is the number of elements of~$ \cA $ that cross~$ a $ and lie in some~$ \cA_i $ with~$ \abs{i}=k $. This $ k $-crossing number is either a natural number or infinity. The family $ \family{\cA_i\mid i\in I} $ \emph{thinly splinters} if it satisfies the following three properties:

\begin{enumerate}
	\item For every $ i\in I $ all elements of $ \cA_i $ have finite $ k $-crossing number for all~$ k\le\abs{i} $. \label[property]{property:fin_cn}
	\item If $ a_i\in\cA_i $ and $ a_j\in\cA_j $ cross with $ \abs{i}<\abs{j} $, then $ \cA_j $ contains some corner of~$ a_i $ and~$ a_j $ that is nested with~$ a_i $. \label[property]{property:fish}
	\item If $ a_i\in\cA_i $ and $ a_j\in\cA_j $ cross with $ \abs{i}=\abs{j}=k \in \N_0 $, then either $ \cA_i $ contains a corner of $ a_i $ and $ a_j $ with strictly lower $ k $-crossing number than~$ a_i $, or else $ \cA_j $ contains a corner of $ a_i $ and $ a_j $ with strictly lower~$ k $-crossing number than~$ a_j $. \label[property]{property:strong_submodular}
\end{enumerate}


We are now ready to state and prove the main result of this section:

\splinterThinly*

\begin{proof}
	We shall construct inductively, for each $ k\in\N_0 $, a nested set $ N_k\sub\cA $ extending $ N_{k-1} $ and meeting every $ \cA_i $ with $ \abs{i}\le k $, such that the choice of $ N_k $ is invariant under isomorphisms. The desired nested set $ N $ will then be the union of all these sets~$ N_k $.
	
	We set~$ N_{-1}\coloneqq\emptyset $. Suppose that for some $ k\in\N_0 $ we have already constructed a nested set~$ N_{k-1} $ such that $ N_{k-1} $ is canonical and meets every $ \cA_i $ with $ \abs{i}\le k-1 $. We shall construct a canonical nested set $ N_k\supseteq N_{k-1} $ that meets every $ \cA_i $ with~$ \abs{i}\le k $.
	
	Let $ N^+_k $ be the set consisting of the following: for every $ i\in I $ with $ \abs{i}=k $, among those elements of $ \cA_i $ that are nested with $ N_{k-1} $, those of minimum $ k $-crossing number. We claim that $ N_k\coloneqq N_{k-1}\cup N_k^+ $ is as desired.
	
	Since the choice of $ N^+_k $ is invariant under isomorphisms, and $ N_{k-1} $ is canonical by assumption, $ N_k $ is clearly canonical as well. It thus remains to show that~$ N_k $ meets every~$ \cA_i $ with~$ \abs{i}=k $, and that the set $ N_k $ is nested.
	
	To see that the former is true, let $ i\in I $ with $ \abs{i}=k $ be given. It suffices to show that~$ \cA_i $ contains some element that is nested with~$ N_{k-1} $. If $ \cA_i $ already meets $ N_{k-1} $ there is nothing to show, so suppose that it does not. By~\cref{property:fin_cn} every element of~$ \cA_i $ crosses only finitely many elements of~$ N_{k-1} $; pick an $ a_i\in\cA_i $ that crosses as few as possible. Suppose for a contradiction that $ a_i $ crosses some element of $ N_{k-1} $, that is, some $ a_j\in\cA_j $ with~$ \abs{j}<\abs{i} $. But then, by~\cref{property:fish}, $ \cA_i $ contains a corner of $ a_i $ and~$ a_j $ that is nested with~$ a_j $. This element of $ \cA_i $ does not cross $ a_j $ and therefore, by virtue of being a corner of $ a_i $ and $ a_j $, crosses fewer elements of $ N_{k-1} $ than $ a_i $ does, contrary to the choice of~$ a_i $. Therefore $ N_k $ indeed contains an element of each $ \cA_i $ with~$ \abs{i}\le k $.
	
	Let us now show that $ N_k $ is nested. Since $ N_{k-1} $ is a nested set by assumption, and every element of $ N^+_k $ is nested with $ N_{k-1} $, we only need to show that the set $ N^+_k $ itself is nested. So suppose that some two elements of $ N^+_k $ cross. These two elements then are some~$ a_i\in\cA_i $ and~$ a_j\in\cA_j $ with~$ \abs{i}=\abs{j}=k $. But now~\cref{property:strong_submodular} asserts that one of $ \cA_i $ and $ \cA_j $ contains a corner of $ a_i $ and $ a_j $ with a strictly lower $ k $-crossing number than the corresponding element~$ a_i $ or~$ a_j $. Since both $ a_i $ and $ a_j $ are nested with $ N_{k-1} $ their corner is nested with $ N_{k-1} $ as well, and hence contradicts the choice of~$ a_i $ or~$ a_j $ for~$ N^+_k $.
\end{proof}

\section{Applications of the thin splinter lemma}\label{sec:applications_thinly}

In this section we are going to apply \cref{thm:thinly} to infinite graphs. The application to locally finite graphs in \cref{sec:application_loc_fin} will be a straightforward application to a universe of separations, whereas in \cref{sec:application_infinite} we are going to use a more involved argument.

For either case we will utilise the fact that separations which efficiently distinguish two regular profiles are tight. Recall that for a set $ X\sub V $ a component $ C $ of $ G-X $ is \emph{tight} if~$ N(C)=X $. We say that a separation $ (A,B) $ of $ G $ is \emph{tight} if for $ X\coloneqq A\cap B $ each of $ A\sm B $ and $ B\sm A $ contains some tight component of~$ G-X $.

\begin{LEM}\label{lem:distinguisher_tight}
 Let $P,P'$ be two distinct regular profiles in an arbitrary graph $G$. If $(A,B)$ is a separation of finite order that efficiently distinguishes $P$ and $P'$, then $(A,B)$ is tight.
\end{LEM}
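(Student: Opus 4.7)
The plan is to argue by contradiction: suppose $(A, B)$ efficiently distinguishes the regular profiles $P, P'$ but is not tight; orient so that $(A, B) \in P$ and $(B, A) \in P'$, and by symmetry assume $A \sm B$ contains no tight component of $G - X$, where $X := A \cap B$. If $A \sm B$ is empty then $A \subseteq B$, so $(B, A)$ is cosmall, contradicting regularity of $P'$. Hence $A \sm B$ is a non-empty union of components $C_1, C_2, \ldots$ of $G - X$, each with $N(C_i) \subsetneq X$ by the non-tightness assumption.

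The key idea is to construct, for each $v$ in the non-empty set $X^* := X \sm \bigcap_i N(C_i)$, a separation $s_v$ of order at most $|X| - 1$ lying in $P \cap P'$. Let $U_v$ be the union of those components $C_i$ with $v \notin N(C_i)$, and set $s_v := (U_v \cup N(U_v), V \sm U_v)$; its separator $N(U_v) \subseteq X \sm \{v\}$ has size at most $|X| - 1$. Since $s_v \leq (A, B)$, consistency of $P$ places $s_v$ in $P$; since its order is strictly below the efficient distinguishing order $|X|$, the separation $s_v$ must also lie in $P'$.

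Now take the finite join $s^* := \bigvee_{v \in X^*} s_v$, which lies in $P'$ by iterating the profile property. A short computation, using that every component $C_i$ has some $v \in X \sm N(C_i)$ which automatically lies in $X^*$, yields $s^* = (U \cup N(U), V \sm U)$ where $U = A \sm B$. Applying (P) to $s^*$ and $(B, A)$, both in $P'$, shows that the meet of their inverses, which simplifies to $(X, V)$, cannot lie in $P'$; but regularity of $P'$ also forbids the cosmall separation $(V, X)$, contradicting that $P'$ orients $\{X, V\}$.

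The principal obstacle is that $A \sm B$ may contain infinitely many components of $G - X$, so we cannot take a single join over all of them. Indexing instead by the finitely many vertices of $X^* \subseteq X$ keeps all joins finite while still reassembling $U$ and $N(U)$ in aggregate. A minor additional subtlety is the edge case $N(U) = X$, in which $s^*$ coincides with $(A, B)$; the final step still succeeds, because the inverse-meet with $(B, A)$ remains $(X, V)$ and the regularity argument rules out $(V, X)$ in exactly the same way.
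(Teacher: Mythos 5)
Your proof is correct and follows essentially the same strategy as the paper's: assume for contradiction that one side has no tight component, decompose the components of that side into finitely many groups, produce for each group a lower-order separation lying in both profiles, join these to reconstruct the non-tight side, and derive a contradiction from the regularity of the profile that points towards that side. The paper groups the components by their exact neighbourhood $Y_l\subsetneq X$ (a partition), whereas you index by vertices $v\in X^*$ not lying in every $N(C_i)$ (a cover, with the same finiteness pay-off), and the paper folds $(X,V)\in P$ into the join explicitly rather than reading off $(X,V)\notin P'$ from the profile property at the end; these are cosmetic variations of the same argument.
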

\begin{proof} Let $(A,B)\in P$, $(B,A)\in P'$. 

    Suppose for a contradiction that $B\sm A$ does not contain a tight component of $G-(A\cap B)$. Let $Y_1,\dots Y_m$ be an enumeration of all proper subsets of $A\cap B$. For every $Y_l$ let $\cC_l$ be the set of components of $G-(A\cap B)$ in $B$ with neighbourhood exactly $Y_l$. By consistency of $P'$ we have $(\bigcup \cC_l\cup Y_l,V\sm \bigcup \cC_l)\in P'$. Since moreover $(A,B)$ efficiently distinguishes $P$ from $P'$ and $|Y_l| < |A\cap B|$ we know that $(\bigcup \cC_l\cup Y_l,V\sm \bigcup \cC_l)\in P$ as well. Moreover, $(A\cap B,V)\in P$ since $P$ is regular. Thus, by an inductive application of the profile \cref{property:P} we have that for every $l$
    \[(A\cap B,V)\vee(\bigcup \cC_1\cup Y_1,V\sm \bigcup \cC_1)\vee\dots\vee (\bigcup \cC_l\cup Y_l,V\sm \bigcup \cC_l)\in P.\]
However, for $l=m$ this contradicts the assumption since
    \[(A\cap B,V)\vee(\bigcup \cC_1\cup Y_1,V\sm \bigcup \cC_1)\vee\dots\vee (\bigcup \cC_m\cup Y_m,V\sm \bigcup \cC_m))=(B,A)\notin P\,. \]
\end{proof}

\subsection{Locally finite graphs}\label{sec:application_loc_fin}
In this section we apply \cref{thm:thinly} to the set of separations of a locally finite graph, which will result in a canonical nested set of separations efficiently distinguishing any two distinguishable regular profiles in $G$. The proof of this theorem will be a straightforward application of \cref{thm:thinly} to sets $\cA_{P,P'}$ of separations efficiently distinguishing two profiles in $G$. Following the strategy of this proof, one might be able to obtain similar results for other infinite separation systems, e.g., in a matroid.

So let $G=(V,E)$ be a locally finite connected graph and $\cP$ a set of robust regular profiles in~$G$.

Let $I$ be the set of pairs of distinguishable profiles in~$\cP$. For each pair $ P $ and $ P' $ of distinguishable profiles in~$ \cP $ let~$ \cA_{P,P'} $ be the set of all separations of $ G $ that distinguish~$ P $ and~$ P' $ efficiently. Observe that by definition all separations in $ \cA_{P,P'} $ are of the same order; let us write~$ \abs{P,P'} $ for this order.

Let $ \cA $ be the union of all the $ \cA_{P,P'} $. We wish to show that $ \family{\cA_i\mid i\in I} $ thinly splinters, using as the relation $\sim$ on~$ \cA $ the usual nestedness of separations. We shall prove first that~\cref{property:fin_cn} is satisfied, i.e. that each separation in an $ \cA_{P,P'} $ crosses only finitely many other separations from sets~$ \cA_{Q,Q'} $ with~$ \abs{Q,Q'}\le\abs{P,P'} $.

Making use of the tightness of the separations in the $ \cA_{P,P'} $, \cref{property:fin_cn} will follow immediately from the following assertion:

\begin{PROP}\label{prop:loc_fin_cross}
	Let $ (A,B) $ be a separation that efficiently distinguishes some two regular profiles in $ G $. Then $ G $ has only finitely many tight separations of order at most $ \abs{(A,B)} $ that cross~$ (A,B) $.
\end{PROP}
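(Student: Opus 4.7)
Set $k=|(A,B)|$ and $X=A\cap B$. By \cref{lem:distinguisher_tight}, $(A,B)$ is tight, so there are tight components $C_A\subseteq A\sm B$ and $C_B\subseteq B\sm A$ of $G-X$ with $N(C_A)=N(C_B)=X$. A preliminary observation, used repeatedly, is that in a locally finite connected graph $G-W$ has only finitely many components for every finite $W\subseteq V$: each component of $G-W$ is adjacent to $W$, and $W$ has finite total degree by local finiteness. I plan to prove the proposition by first bounding the set of possible \emph{separators} $Y$ of tight crossing separations, and then, for each such $Y$, bounding the number of tight separations with separator $Y$.

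\textbf{Step 1 (bounding the separators).} Fix a tight $(C,D)$ of order $\le k$ crossing $(A,B)$ with separator $Y$; let $D_1\subseteq C\sm D$ and $D_2\subseteq D\sm C$ be tight components of $G-Y$, so $N(D_1)=N(D_2)=Y$. A case analysis on the positions of $X$, $C_A$, $C_B$ relative to $(C,D)$ shows that $Y$ must meet $X\cup C_A\cup C_B$: otherwise the connected set $X\cup C_A\cup C_B$ lies on a single side of $(C,D)$, and the (finitely many) remaining components of $G-X$, which are all adjacent to $X$, are forced onto the same side, contradicting that $(C,D)$ crosses $(A,B)$. Using a vertex of $Y\cap(X\cup C_A\cup C_B)$ together with its (finite) neighbourhood, one locates $u\in D_1$ and $v\in D_2$ lying in the finite sets $U:=(X\cup C_A)\cap N[X]$ and $V:=(X\cup C_B)\cap N[X]$ respectively. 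Then by an argument analogous to the one used to show that $X$ is a minimal $u_0$-$v_0$-separator for any $u_0\in C_A$, $v_0\in C_B$ (as in the proof of \cref{lem:chain_seps}): any $Y'\subsetneq Y$ fails to separate $u$ from $v$, since a vertex $y\in Y\sm Y'$ has neighbours in $D_1$ and $D_2$ by tightness, yielding a $u$-$v$-path through $D_1\cup\{y\}\cup D_2$ avoiding $Y'$. Hence $Y$ is a minimal $u$-$v$-separator of order $\le k$. Since $U\times V$ is finite, \cref{lem:finitelymany_seps} bounds the number of such $Y$.

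\textbf{Step 2 (bounding separations per separator).} For each candidate $Y$, the preliminary observation gives that $G-Y$ has only finitely many components. Any separation $(C,D)$ of $G$ with separator $Y$ corresponds to a partition of these components into two classes, and tightness requires at least one tight component of $G-Y$ on each side. Hence only finitely many tight separations arise for each $Y$.

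Combining the two steps yields the claimed finiteness. The main obstacle is the case analysis in Step 1: ensuring that $u$ and $v$ can always be found in the predetermined finite sets $U$ and $V$, regardless of how $Y$ distributes its vertices among $X$, $C_A$, $C_B$ and the other components of $G-X$. The crucial inputs making this tractable are the tightness of both $(A,B)$ and $(C,D)$ together with the finiteness of the component decomposition of $G-X$ afforded by local finiteness and connectedness of $G$.
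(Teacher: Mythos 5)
Your overall architecture matches the paper's: bound the possible separators $Y$ of tight crossing separations by showing each such $Y$ is a minimal $u$--$v$-separator for $u,v$ in a fixed finite set (and invoking \cref{lem:finitelymany_seps}), then bound the number of separations per separator via local finiteness. Step 2 is correct and is essentially the paper's final step. Step 1, however, has two gaps.

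First, the sketched justification for \emph{``$Y$ must meet $X\cup C_A\cup C_B$''} does not work as written. You claim the remaining components of $G-X$ are ``forced onto the same side'' of $(C,D)$, but this is false in general: a component $K$ of $G-X$ with $K\cap Y\neq\emptyset$ can be split across both sides of $(C,D)$ even if $X\cup C_A\cup C_B$ avoids $Y$. The claim itself is true, but the real reason is different: since $(C,D)$ is tight, if $Y$ avoids $X\cup C_A\cup C_B$ then $D\sm C$ contains a tight component $D_2$ of $G-Y$ that lies in a single component $K$ of $G-X$, whence $Y=N(D_2)\subseteq K\cup X$ and thus $Y\subseteq K$; from there one deduces $(B,A)\le(C,D)$ directly.

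Second, and more seriously, once you know $Y$ meets $X\cup C_A\cup C_B$, your method for locating $u\in D_1$ and $v\in D_2$ within fixed finite sets does not go through. If the vertex $y\in Y\cap(X\cup C_A\cup C_B)$ happens to lie in $C_A$ (or $C_B$) but outside $N(X)$, its neighbourhood is not contained in $N[X]$, and the neighbours $u,v$ of $y$ in $D_1,D_2$ need not lie anywhere near $X$. The paper's \cref{lem:tight_separators} avoids this with a different case split: either $X$ meets all tight components of $G-Y$ (then take $u,v\in X$ directly), or $X$ misses one, and one can then argue $Y\subseteq A$ (say), $X\subseteq C$, and that crossing forces $X\cap Y\neq\emptyset$; only then are $u,v$ taken as neighbours of a vertex $z\in X\cap Y$, giving $u,v\in N(X)$. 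In particular, the finite set that works is $N[X]=X\cup N(X)$, not your more restrictive $U,V$. Your proof as written does not cover the case $Y\cap X=\emptyset$ with $Y$ meeting $C_A$ or $C_B$ only deep inside; you would need to add the observation that in that case $X$ meets all tight components of $G-Y$, which is exactly what the paper's lemma isolates and proves.
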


We shall derive~\cref{prop:loc_fin_cross} from the following lemma about tight separations:

\begin{LEM}\label{lem:tight_separators}
	Let $ (A,B) $ and $ (A',B') $ be two tight separations of~$ G $. Then $ (A',B') $ is either nested with $ (A,B) $, or its separator $ A'\cap B' $ is a $ \sub $-minimal $ x $-$ y $-separator in $ G $ for some pair $ x,y $ of vertices from~$ (A\cap B)\cup N(A\cap B) $.
\end{LEM}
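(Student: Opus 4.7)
The plan is to case-split on whether the separators $X := A\cap B$ and $X' := A'\cap B'$ intersect, after fixing tight components $C\subseteq A\setminus B$, $D\subseteq B\setminus A$ of $G-X$ (with $N(C)=N(D)=X$) and $C'\subseteq A'\setminus B'$, $D'\subseteq B'\setminus A'$ of $G-X'$. The guiding observation is that $X'$ is a $\subseteq$-minimal $x$-$y$-separator of $G$ precisely when $x$ and $y$ lie in distinct tight components of $G-X'$ on opposite sides of $(A',B')$, so, assuming $(A,B)$ and $(A',B')$ cross, it will suffice to exhibit such a pair inside $X\cup N(X)$. In the easy case $X\cap X'\neq\emptyset$, I pick $v\in X\cap X'$; since $v\in X'=N(C')=N(D')$, the tightness of $(A',B')$ provides neighbours $c'\in C'$ and $d'\in D'$ of $v$, and these lie in $N(v)\subseteq N(X)\subseteq X\cup N(X)$, yielding the required pair.

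Assume now $X\cap X'=\emptyset$. First I want to rule out the possibility that $X$ is contained in one side of $(A',B')$. Suppose, without loss of generality, that $X\subseteq A'\setminus B'$ and that $(A,B)$ and $(A',B')$ cross. Then both $(A\setminus B)\cap(B'\setminus A')$ and $(B\setminus A)\cap(B'\setminus A')$ are non-empty, giving components $K_1\subseteq A\setminus B$ and $K_2\subseteq B\setminus A$ of $G-X$ each extending into $B'\setminus A'$. Because $N(K_i)\subseteq X\subseteq A'\setminus B'$ while there are no edges between $A'\setminus B'$ and $B'\setminus A'$, the connectedness of $K_i$ forces $K_i\cap X'\neq\emptyset$. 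Pick $x_i\in K_i\cap X'$; by tightness of $(A',B')$ it has a neighbour in $D'$, and as its $G$-neighbours lie in $K_i\cup X$ while $D'\cap X=\emptyset$ (using $X\subseteq A'\setminus B'$ and $D'\subseteq B'\setminus A'$), this neighbour must lie in $K_i$. Hence $D'$ meets both $K_1$ and $K_2$; but $D'\subseteq V\setminus X$ is connected and therefore confined to a single component of $G-X$, a contradiction.

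This reduces to the case $X\cap X'=\emptyset$ with $X$ meeting both sides of $(A',B')$. Pick $u\in X\cap(A'\setminus B')$ and $v\in X\cap(B'\setminus A')$ (both lying in $X\subseteq X\cup N(X)$), and let $K_u, K_v$ be their components in $G-X'$. I claim both are tight, so that $X'$ is a $\subseteq$-minimal $u$-$v$-separator, which finishes the proof. The main obstacle will be verifying $N(K_u)=X'$ (and symmetrically $N(K_v)=X'$): for each $w'\in X'$, I need to produce a neighbour of $w'$ inside $K_u$. The plan is to exploit the tight component $C$: because $N(C)=X$, the set $C$ is connected and adjacent both to $u$ and to $v$, which forces $C$ to meet $X'$; combining this with the tightness of $(A',B')$ applied at $w'$, via a forbidden-edge and connectedness argument of the same flavour as in the previous paragraph, should deliver the required neighbour of $w'$ in $K_u$. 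Carrying out this last step carefully, blending the tightness hypotheses of both separations, is what I expect to be the crux of the proof.
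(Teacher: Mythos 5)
Your plan resembles the paper's in spirit — both reduce to exhibiting two tight components of $G-(A'\cap B')$ on opposite sides of $(A',B')$ that are hit by $(A\cap B)\cup N(A\cap B)$ — but the case decomposition is different (the paper splits on whether $A\cap B$ meets \emph{all} tight components of $G-(A'\cap B')$), and your case~3 has a genuine gap.

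Write $X\coloneqq A\cap B$ and $X'\coloneqq A'\cap B'$. Your case~1 is correct. In case~2, the opening claim that crossing together with $X\subseteq A'\setminus B'$ already forces both $(A\setminus B)\cap(B'\setminus A')$ and $(B\setminus A)\cap(B'\setminus A')$ to be non-empty does not follow from crossing alone: if, say, $(A\setminus B)\cap(B'\setminus A')=\emptyset$ then one gets $A\subseteq A'$, but to conclude $B'\subseteq B$ (and hence nestedness) one still has to invoke a tight component $D'\subseteq B'\setminus A'$ of $G-X'$ and argue that $N(D')=X'$ cannot meet $A\setminus B$. Your subsequent argument effectively does use this, so the case is repairable, but the WLOG as stated is not justified.

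The real problem is case~3. There you take \emph{arbitrary} $u\in X\cap(A'\setminus B')$ and $v\in X\cap(B'\setminus A')$ and claim the components $K_u,K_v$ of $G-X'$ containing them are tight. That claim is false: $X$ may contain several vertices of $A'\setminus B'$ lying in different components of $G-X'$, only some of which are tight. A small example: take the graph on $\{u,u_2,v,p,q,r,a,b,c\}$ with edges $up$, $uq$, $u_2r$, $u_2q$, $u_2b$, $vq$, $va$, $vc$, $pr$, $pb$, $pc$, $ra$. Then $X=\{u,u_2,v\}$ with sides $\{p,r,a,b,c\}$ and $\{q\}$, and $X'=\{p,q,r\}$ with sides $\{u,u_2,b\}$ and $\{v,a,c\}$, define two crossing tight separations with $X\cap X'=\emptyset$ and $X$ meeting both sides of the other (so you are in your case~3). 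The tight components of $G-X'$ are $\{u_2,b\}$ and $\{v,a,c\}$, but $u$ sits in the non-tight singleton $\{u\}$ with $N(\{u\})=\{p,q\}\subsetneq X'$ — so $X'$ is not a $\subseteq$-minimal $u$-$v$-separator, and the plan sketched in your final paragraph cannot succeed for this choice of $u$. What is true, and what should be proved instead, is that in case~3 the set $X$ must meet the two tight components $C'\subseteq A'\setminus B'$ and $D'\subseteq B'\setminus A'$ of $G-X'$: if $X$ missed one, say a tight $C''$, then $C''$ would lie entirely in one side of $(A,B)$, say $A\setminus B$, giving $X'\subseteq A$; then the tight component $D$ of $G-X$ inside $B\setminus A$ avoids $X'$ and so lies entirely in one side of $(A',B')$, whence $N(D)=X$ cannot meet the opposite side — contradicting that $X$ straddles $(A',B')$. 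Picking $u'\in X\cap C'$ and $v'\in X\cap D'$ then finishes. So your argument needs this extra step, and in particular the representatives must be chosen inside the tight components rather than arbitrarily.
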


\begin{proof}
	Since $ (A',B') $ is tight each of $ A'\sm B' $ and $ B'\sm A' $ contains some tight component of~$ G-(A'\cap B') $. If~$ A\cap B $ meets all tight components of~$ G-(A'\cap B') $ then in particular~$ A\cap B $ meets these two components, say in~$ x $ and in~$ y $. But then~$ A'\cap B' $ is a $ \sub $-minimal $ x $-$ y $-separator with~$ x,y\in A\cap B $.
	
	Therefore we may assume that $ A\cap B $ misses some tight component $ C' $ of $ G-(A'\cap B') $. By switching their names if necessary we may assume that this component $ C' $ is contained in~$ A\sm B $. Since $ C'\sub A\sm B $ has no neighbours in $ B\sm A $ but has $ A'\cap B' $ as its neighbourhood we can infer that~$ (A'\cap B')\sub A $.
	
	Consider now a tight component $ C $ of~$ G-(A\cap B) $ that is contained in~$ B\sm A $. From~$ (A'\cap B')\sub A $ it follows that~$ C $ does not meet~$ A'\cap B' $ and is hence contained in either~$ A'\sm B' $ or~$ B'\sm A' $. By possibly switching the roles of $A'$ and $B'$ we may assume that~$ C\sub A'\sm B' $. As above we can conclude from the tightness of~$ C $ that~$ (A\cap B)\sub C $.
	
	It remains to check two cases. If~$ (B\sm A)\cap(B'\sm A') $ is empty we have~$ B\sub A' $ and~$ B'\sub A $, that is, that~$ (A',B') $ is nested with~$ (A,B) $. The other remaining case is that~$ (B\sm A)\cap(B'\sm A') $ is non-empty.
	
	In that case, since~$ G $ is connected, the set~$ (A\cap B)\cap(A'\cap B') $ must be non-empty as well, since $N((B\sm A)\cap(B'\sm A'))\subseteq (A\cap B)\cap(A'\cap B')$. Pick a vertex~$ z $ from that set. Since~$ (A',B') $ is tight~$ z $ has neighbours~$ x $ and~$ y $ in some tight components of~$ G-(A'\cap B') $ contained in~$ A'\sm B' $ and in~$ B'\sm A' $, respectively. Then~$ A'\cap B' $ is a $ \sub $-minimal $ x $-$ y $-separator in~$ G $, and moreover~$ x,y\in (A\cap B)\cup N(A\cap B) $ since~$ z\in A\cap B $.
\end{proof}


Let us now use~\cref{lem:tight_separators} to establish~\cref{prop:loc_fin_cross}:

\begin{proof}[Proof of~\cref{prop:loc_fin_cross}.]
	Since $ G $ is locally finite the set $ (A\cap B)\cup N(A\cap B) $ is finite. Therefore, by~\cref{lem:finitelymany_seps}, there are only finitely many $ \sub $-minimal $ x $-$ y $-separators of size at most $ \abs{(A,B)} $ with~$ x,y\in (A\cap B)\cup N(A\cap B) $. Leveraging again the fact that $ G $ is locally finite and using that $G$ is connected, we get that there are only finitely many separations of $ G $ with such a separator.
	
	The assertion now follows from~\cref{lem:tight_separators}  since we know by \cref{lem:distinguisher_tight} that $ (A,B) $ is tight. 
\end{proof}

The family $ \family{\cA_i\mid i\in I} $ therefore has~\cref{property:fin_cn}. With regard to~\cref{property:fish} it turns out that we already did the required work back in~\cref{sec:profinite_app}:

\begin{COR}\label{cor:property2}
	If $ \va_i\in\cA_i $ and $ \va_j\in\cA_j $ cross with $ \abs{i}<\abs{j} $, then $ \cA_j $ contains some corner separation of~$ a_i $ and~$ a_j $.
\end{COR}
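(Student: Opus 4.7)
The plan is to observe that this corollary is essentially a restatement of Lemma~\ref{lem:findcorner1} from the profinite section. Although Lemma~\ref{lem:findcorner1} was established in the context of \cref{sec:profinite_app}, a glance at its proof confirms that it never invokes the profinite structure of~$\vU$: the argument relies only on the submodularity of the order function, on the robustness of~$P$ and~$P'$, on the profile property, and on the fact that the separations in $\cA_{P,P'}$ distinguish $P$ and~$P'$ efficiently. All of these hypotheses are available in the current setting with no modification.

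Concretely, I would identify the present $a_i$ and $a_j$ with the separations $(A,B)$ and $(C,D)$ of Lemma~\ref{lem:findcorner1}, and note that the hypothesis $\abs{i}<\abs{j}$ translates exactly to $\abs{(A,B)}<\abs{(C,D)}$. The lemma then supplies a corner separation of $(A,B)$ and $(C,D)$ lying in the set $\cA_{Q,Q'}$ which here is precisely our $\cA_j$, as desired. There is nothing further to check.

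I do not anticipate any obstacle: the only conceivable worry is whether the proof of Lemma~\ref{lem:findcorner1} silently uses some ambient property of the profinite setting (closedness, existence of limits, and so on), but it does not. In fact, since corner separations in a universe are by construction nested with both of the separations forming them, the corner produced is automatically nested with $a_i$; this will be relevant when the corollary is used in the next step to verify \cref{property:fish} of the thin splinter lemma for $\family{\cA_i\mid i\in I}$.
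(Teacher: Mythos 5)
Your proposal matches the paper's proof exactly: the paper also proves this corollary by citing Lemma~\ref{lem:findcorner1} directly, noting implicitly (as you make explicit) that the lemma's proof uses only submodularity and the profile properties, not anything specific to the profinite setting. Your closing remark that a corner separation of $a_i$ and $a_j$ is automatically nested with $a_i$ is correct and is indeed what makes the corollary suffice for \cref{property:fish}.
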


\begin{proof}
	This is the assertion of~\cref{lem:findcorner1}.
\end{proof}

It remains to show that $ \family{\cA_i\mid i\in I} $ has~\cref{property:strong_submodular}. 

Using again our preparatory work from~\cref{sec:profinite_app}, we can now show that $ \family{\cA_i\mid i\in I} $ has~\cref{property:strong_submodular} using \cref{lem:crossnumb_graph}:

\begin{LEM}\label{lem:property3}
	If $ \va_i\in\cA_i $ and $ \va_j\in\cA_j $ cross with $ k=\abs{i}=\abs{j} $, then either $ \cA_i $ contains a corner separation of $ a_i $ and $ a_j $ with strictly lower $ k $-crossing number than~$ \va_i $, or else $ \cA_j $ contains a corner separation of $ a_i $ and $ a_j $ with strictly lower~$ k $-crossing number than~$ \va_j $.
\end{LEM}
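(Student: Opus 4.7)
The plan is to invoke \cref{lem:findcorner2} to produce corner separations in the correct $\cA$-sets, and then use a counting argument to show that one of them has strictly lower $k$-crossing number than $a_i$ or $a_j$. Write $r\coloneqq a_i$ and $s\coloneqq a_j$, and let $X$ denote the set of order-$k$ separations (those lying in some $\cA_l$ with $\abs{l}=k$) that cross $r$, $Y$ the analogous set for $s$, and $Z_c$ the analogous set for each corner $c$ of $r$ and $s$; then $\abs{X}$, $\abs{Y}$ and $\abs{Z_c}$ are exactly the $k$-crossing numbers of $r$, $s$ and $c$.

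The central technical step will be to establish, for any opposite pair $\{c,c'\}$ of corner separations of $r$ and $s$, the inequality
\[ \abs{Z_c} + \abs{Z_{c'}} \le \abs{X} + \abs{Y} - 2. \]
The proof of this combines the fish lemma with \cref{lem:crossnumb_graph} and an elementary observation. First, \cref{lem:fish} asserts that any separation nested with both $r$ and $s$ is nested with all four corners, so $Z_c\cup Z_{c'}\sub X\cup Y$. Next, a suitable application of \cref{lem:crossnumb_graph} (choosing orientations so that $\vr\meet\vs,\vr\join\vs$ represent the pair $\{c,c'\}$) shows that any separation nested with $r$ or with $s$ is nested with at least one of $c,c'$; contrapositively $Z_c\cap Z_{c'}\sub X\cap Y$. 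Inclusion--exclusion then yields $\abs{Z_c}+\abs{Z_{c'}}\le\abs{X}+\abs{Y}$. To strengthen this by $2$, note that $r\in Y$ since $r$ crosses $s$ and $r\in\cA_i$ with $\abs{i}=k$, while $r$ is nested with every corner of $r$ and $s$ (as $\vr\le\vr\join\vs$, $\vr\le\vr\join\sv$, etc.), so $r\notin Z_c\cup Z_{c'}$; symmetrically $s\in X\sm(Z_c\cup Z_{c'})$, and $r\neq s$ since they cross.

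With the inequality in hand, \cref{lem:findcorner2} leaves two cases. In the first, a single opposite pair $\{c,c'\}$ is given with $c\in\cA_i$ and $c'\in\cA_j$: if both $\abs{Z_c}\ge\abs{X}$ and $\abs{Z_{c'}}\ge\abs{Y}$ held, summing would contradict the inequality, so one of the two strict inequalities must occur. In the second, two opposite pairs $\{c_1,c_2\}\sub\cA_i$ and $\{c_3,c_4\}\sub\cA_j$ are provided; should the $\cA_i$-pair fail to contain a corner of $k$-crossing number below $\abs{X}$, the inequality forces $2\abs{X}\le\abs{X}+\abs{Y}-2$, i.e.\ $\abs{X}\le\abs{Y}-2$, while a symmetric failure of the $\cA_j$-pair would require $\abs{Y}\le\abs{X}-2$; these cannot hold simultaneously, so at least one of the two pairs delivers the desired corner.

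The main obstacle will be the $-2$ improvement over the naive inclusion--exclusion bound: without it, the arithmetic in Case~2 collapses and equality would be permitted throughout. The improvement only surfaces once one notices that $r$ and $s$ themselves are two distinct elements of $X\cup Y$ that are automatically absent from $Z_c\cup Z_{c'}$.
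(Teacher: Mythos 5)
Your proposal is correct and rests on exactly the same ingredients as the paper's proof: invoke \cref{lem:findcorner2} to produce opposite corner separations in the right $\cA$-sets, then combine \cref{lem:fish} and \cref{lem:crossnumb_graph} into a subadditivity estimate for $k$-crossing numbers of opposite corner pairs, and finish with a small counting argument. The paper streamlines this by first assuming WLOG that $\va_i$'s $k$-crossing number is at most $\va_j$'s and then observing that, in \emph{either} case of \cref{lem:findcorner2}, one can extract a single corner in $\cA_j$ whose opposite lies in $\cA_i\cup\cA_j$; you instead treat the two cases of \cref{lem:findcorner2} separately, which costs a little extra bookkeeping but is equally valid. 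One small correction: the $-2$ refinement is not actually the crux. The paper only needs the strict inequality $\abs{Z_c}+\abs{Z_{c'}} < \abs{X}+\abs{Y}$ (i.e.\ $-1$), which already suffices for both of your cases — in Case~2 it yields $\abs{X}<\abs{Y}$ and $\abs{Y}<\abs{X}$, still a contradiction. So the sentence claiming the argument ``collapses'' without the extra unit is overstated; the $-2$ is a legitimate but unnecessary strengthening.
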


\begin{proof}
	By switching their roles if necessary we may assume that the $ k $-crossing number of~$ \va_i $ is at most the $ k $-crossing number of~$ \va_j $.
	
	From~\cref{lem:findcorner2} it follows that $ \cA_j $ contains a corner separation of~$ a_i $ and~$ a_j $ whose opposite corner separation lies in either~$ \cA_i $ or~$ \cA_j $. Now~\cref{lem:crossnumb_graph} implies that the sum of the $ k $-crossing numbers of this pair of opposite corner separations is at most the sum of the~$ k $-crossing numbers of~$ \va_i $ and~$ \va_j $. This inequality is in fact strict since~$ \va_i $ and~$ \va_j $ cross each other but are each nested with both corner separations.
	
	If the first corner separation is not already as desired, that is, if its $ k $-crossing number is not strictly lower than the $ k $-crossing number of~$ \va_j $, we can infer that the $ k $-crossing number of the opposite corner separation is strictly lower than that of~$ \va_i $. Since we assumed in the beginning that the $ k $-crossing number of~$ \va_i $ is no greater than that of~$ \va_j $ this proves the claim.
\end{proof}

We are now ready to prove the main result of this subsection, which is similar to \cite{carmesin2020canonical}*{Theorem 7.5}:

\begin{THM}\label{thm:nested_loc_fin}
	Let $G$ be a locally finite connected graph and $\cP$ some set of robust regular profiles in~$G$. Then there exists a nested set $\cN$ of separations which efficiently distinguishes any two distinguishable profiles in~$\cP$. Moreover, this set is canonical, i.e. invariant under isomorphisms: If $\alpha:G\to G'$ is an isomorphism, then $\alpha(\cN(G,\cP))=\cN(\alpha(G),\alpha(P))$.
\end{THM}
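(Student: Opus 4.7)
The plan is to apply the thin splinter lemma (\cref{thm:thinly}) to the family $\family{\cA_{P,P'} \mid (P,P') \in I}$, where $I$ is the set of pairs of distinguishable profiles in~$\cP$ and $\cA_{P,P'}$ is the set of separations efficiently distinguishing~$P$ and~$P'$, equipped with the usual nestedness relation on separations and the order function $\abs{(P,P')}$ (the common order of all elements of $\cA_{P,P'}$). Since every element of $\cA$ is then a separation of~$G$, a corner of $a_i,a_j\in\cA$ in the sense of \cref{sec:thinly} is witnessed by an actual corner separation (by~\cref{lem:fish}), so the abstract notion of ``corner'' specialises to the usual one in this setting.

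First I would verify that the three conditions for thin splintering hold. \Cref{property:fin_cn} follows from \cref{prop:loc_fin_cross} together with \cref{lem:distinguisher_tight}: any $(A,B)\in\cA_{P,P'}$ is tight, and any separation in some $\cA_{Q,Q'}$ of order at most $\abs{(A,B)}$ crossing $(A,B)$ is likewise tight and has small order, so by \cref{prop:loc_fin_cross} there are only finitely many such crossings. \Cref{property:fish} is exactly \cref{cor:property2}, which in turn is a restatement of \cref{lem:findcorner1} from the profinite section. Finally, \cref{property:strong_submodular} is \cref{lem:property3}, whose proof combines \cref{lem:findcorner2} with \cref{lem:crossnumb_graph} to bound sums of crossing numbers of pairs of opposite corners.

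With all three properties established, \cref{thm:thinly} yields a nested set $\cN\subseteq\cA$ meeting every $\cA_{P,P'}$, that is, containing a separation that efficiently distinguishes each pair of distinguishable profiles in~$\cP$. This is the desired nested distinguishing set.

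For canonicity, I would appeal to the ``moreover'' clause of \cref{thm:thinly}: any graph isomorphism $\alpha\colon G\to G'$ sending $\cP$ to $\alpha(\cP)$ induces an isomorphism of the universes of separations of~$G$ and~$G'$, mapping efficient distinguishers of $(P,P')$ to efficient distinguishers of $(\alpha(P),\alpha(P'))$ and preserving nestedness. Hence $\alpha$ maps the family $\family{\cA_{P,P'}}$ bijectively onto $\family{\cA_{\alpha(P),\alpha(P')}}$ and intertwines the nestedness relations, so the invariance statement of \cref{thm:thinly} gives $\alpha(\cN(G,\cP))=\cN(\alpha(G),\alpha(\cP))$. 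The proof is essentially a bookkeeping step; the main work has already been done in the preceding lemmas, and I do not anticipate any genuine obstacle.
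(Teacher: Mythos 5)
Your proposal is correct and follows essentially the same route as the paper: the paper's proof also simply combines \cref{prop:loc_fin_cross}, \cref{cor:property2}, and \cref{lem:property3} to verify that the family $\family{\cA_{P,P'}\mid \{P,P'\}\in I}$ thinly splinters and then invokes \cref{thm:thinly}. Your extra remark that the abstract notion of corner specialises (via \cref{lem:fish}) to concrete corner separations, and the explicit appeal to the canonicity clause of \cref{thm:thinly}, are helpful unpacking of steps the paper leaves implicit, but not a different argument.
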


\begin{proof}
	The combination of~\cref{prop:loc_fin_cross},~\cref{cor:property2}, and~\cref{lem:property3} shows that the family $ \family{\cA_i\mid i\in I} $ thinly splinters. The nested set $ N\sub\cA $ produced by~\cref{thm:thinly} meets each set $ \cA_i $ and thus disinguishes all pairs of dinstinguishable profiles in~$ \cP $ efficiently.
\end{proof}

The nested set found by~\cref{thm:nested_loc_fin} does not in general correspond to a tree-decomposition of~$ G $, as~\cref{ex:notd} demonstrated. However \cref{thm:nested_loc_fin} can be used to shown that for every fixed integer~$ k $ the subset of~$ N $ consisting of all separations of order at most~$ k $ gives rise to a tree-decomposition of~$ G $, as this subset will satisfy the conditions from \cref{lem:TreesetTD}. In particular we can use \cref{thm:nested_loc_fin} together with \cref {lem:TreesetTD} to prove \cite{carmesin2020canonical}*{Theorem 7.3}, that there is for every $ k\in\N $, every locally finite graph $G$ and every set $\cP$ of distinguishable robust regular profiles, pairwise distinguishable by a separation of order at most $k$, a canonical tree-decomposition of~$ G $ that efficiently distinguishes all profiles from $\cP$.

\subsection{Graphs with vertices of infinite degree}\label{sec:application_infinite}
When we consider graphs with vertices of infinite degree, the method of the previous section fails as we loose \cref{prop:loc_fin_cross}: It does not necessarily hold that every separation in an $\cA_{P,P'}$ crosses only finitely many other separations from sets $\cA_{Q,Q'}$ with $|Q,Q'| \leq |P,P'|$. Moreover, Dunwoody and Krön~\cite{DunwoodyKroen} gave an example of a graph which does not contain a canonical nested set of separations separating its ends. As ends induce robust regular profiles, in arbitrary graphs, it is not generally possible to find a canonical nested set of separations distinguishing all the robust regular profiles.

To show the result for locally finite graphs we made use of the observation that only finitely many different \emph{separators} are involved, and then used that every separator appears in only finitely many separations.
Thus in this section instead of applying \cref{thm:thinly} directly to some set of \emph{separations}, we are going to apply it to only the set of \emph{separators}.

With this approach we show that in an arbitrary graph you can find a canonical nested set of separators which efficiently distinguishes all the robust regular profiles in $G$. We shall make the meaning of this more precise shortly. We propose that this set of separators is a natural intermediate object for distinguishing profiles.
Moreover we will show that if we restrict ourselves to the set of robust principal profiles -- which we will define at the end of this section -- then from this set we can build both a non-canonical nested set of separations as in \cref{thm:Johannes} (from \cite{carmesinhalinconj}) as well as a canonical tree of tree-decompositions in the sense of~\cite{carmesin2020canonical}. 

Either of these objects can trivially be converted back to a set of separators. Our technique splits the process of building either of these cleanly into two independent steps, which makes it more accessible than the proofs in \cite{carmesinhalinconj}~and~\cite{carmesin2020canonical}. Moreover, the first step of this process also works for non-principal but regular profiles, allowing us to also get a (intermediate) result for those profiles, unlike the theorems from \cite{carmesinhalinconj} and \cite{carmesin2020canonical}. Note that distinguishing non-principal profiles is also discussed extensively in \cite{ElmKurkofkaInfTangles}.

Many of the techniques applied throughout are similar to or inspired by arguments made in~\cite{carmesin2020canonical}, particularly the approach of minimising the crossing-number, even though the different levels of abstraction make it hard to draw concrete parallels.

Let us now begin with the formal notation.
We say that \emph{a set of vertices $X \subseteq V(G)$ efficiently distinguishes} a pair $P$ and $P'$ of profiles in $G$ if there exists a separation $(A,B)$ of $G$ with separator $A\cap B = X$ which efficiently distinguishes $P$ and $P'$. Such a separation $(A,B)$ is then a \emph{witness} that $X$ efficiently distinguishes $P$ and $P'$.

Given some set of distinguishable robust regular profiles $\cP$ of an (infinite) graph $G$, we define as $\cA$ the set of all such separators $X$ which distinguish some pair of profiles in $\cP$ efficiently.
We say that a separator $X$ is \emph{nested} with $Y \in \cA$, i.e.~$X \sim Y$, whenever $X$ is contained in $C \cup Y$ for some component $C$ of $G-Y$. In other words $Y$ does not properly separate any two vertices of $X$.
This relation is reflexive, the following lemma shows that it is also symmetric on $\cA$.
Unfortunately, its natural extension to all finite subsets of $V(G)$ is not. The reader should take note that this will lead to some situations where we argue that some set $Y$ is nested with some $X \in \cA$ \emph{provided that~$Y \in\cA$}.

\begin{LEM}\label{lem:nested_sym}
If $X,Y\in \cA$ and $X$ is contained in $Y$ together with some component of $G-Y$, then $Y$ is contained in $X$ together with some component of $G-X$.
\end{LEM}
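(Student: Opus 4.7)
The plan is to exploit the tightness of the witnessing separation for $Y\in\cA$ given by \cref{lem:distinguisher_tight}. Since $Y\in\cA$, some separation $(A,B)$ with $A\cap B=Y$ efficiently distinguishes two regular profiles in $\cP$, so $(A,B)$ is tight. This yields two tight components $C_1\subseteq A\sm B$ and $C_2\subseteq B\sm A$ of $G-Y$, both with $N(C_1)=N(C_2)=Y$. In particular $C_1\neq C_2$, since they lie on opposite sides of $(A,B)$.

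Next I would use the hypothesis: let $C$ be the component of $G-Y$ with $X\sub Y\cup C$. Because $C_1$ and $C_2$ are distinct components of $G-Y$, at least one of them -- call it $C_1$, after renaming if necessary -- is not equal to $C$. Then $C_1\cap X=\emptyset$, because $C_1\cap Y=\emptyset$ (as $C_1$ is a component of $G-Y$) and $C_1\cap(X\sm Y)\sub C_1\cap C=\emptyset$ (as distinct components of $G-Y$ are disjoint). Hence $C_1$ is connected and avoids $X$, so $C_1$ is contained in a single component $D$ of $G-X$.

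Finally I would transfer this via the tightness of $C_1$. Since $N(C_1)=Y$, every vertex $y\in Y\sm X$ has a neighbour in $C_1\sub D$. Because $y\notin X$, this forces $y\in D$, the component of $G-X$ containing $C_1$. Therefore $Y\sm X\sub D$, i.e.\ $Y\sub X\cup D$, which is exactly the desired conclusion.

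The argument is quite short and I do not expect a real obstacle; the only subtle point is noticing that \emph{two} tight components of $G-Y$ are available from the tightness of the witnessing separation, which lets us pick one that is not the designated component $C$ in the hypothesis. Everything else is then forced by connectedness and the tightness identity $N(C_1)=Y$.
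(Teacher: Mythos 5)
Your proof is correct and follows essentially the same strategy as the paper: take a tight separation of $G$ with separator $Y$ (from \cref{lem:distinguisher_tight}), use the hypothesis to find a tight component $C_1$ of $G-Y$ disjoint from $X$, locate $C_1$ inside a single component $D$ of $G-X$, and then push $Y=N(C_1)$ into $X\cup D$. Your final step, arguing vertex-by-vertex that every $y\in Y\setminus X$ has a neighbour in $C_1\subseteq D$ and hence lies in $D$, is simply a more explicit rendering of the paper's inclusion chain $Y=N(C_1)\subseteq C_1\cup N(C_1)\subseteq D\cup N(D)\subseteq D\cup X$.
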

\begin{proof}
    Pick a separation $(A,B)$ witnessing that $X\in \cA$. Since this separation efficiently distinguishes two regular profiles, by \cref{lem:distinguisher_tight}, there are at least two tight components of $G-X$, one in either side of $(A,B)$.
At least one of these tight components, say $C$, does not meet $Y$ and is therefore contained in a connected component $C'$ of $G-Y$.
Now, as required, we find \[ X = N(C) \subseteq C\cup N(C) \subseteq C'\cup N(C')\subseteq C'\cup Y. \qedhere\]
\end{proof}

As usual we take as $I$ the set of pairs of distinguishable profiles in $\cP$.
But this time we define $\cA_{P,P'}$ for each pair $P, P'$ in $I$ to be the set of all the sets of vertices in $G$ which distinguish $P$ and $P'$ efficiently.
All these separators in $\cA_{P,P'}$ have the same size; this size shall be $|P,P'|$.

We claim that $\{\cA_{P,P'}\mid \{P,P'\}\in I\}$ thinly splinters.
Before we can show \cref{property:fin_cn} we need to make two basic observations about how the vertices of a crossing pair of separators in $\cA$ lie:

\begin{LEM}\label{lem:tight_meeting}
If $X,Y\in \cA$ cross, then $Y$ contains a vertex from every tight component of~$G-X$.
\end{LEM}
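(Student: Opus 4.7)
The plan is to prove this by contradiction, leveraging directly the definition of nestedness for separators together with the tightness hypothesis. Suppose some tight component $C$ of $G-X$ contains no vertex of $Y$; we will show that $X$ is then nested with $Y$, contradicting the assumption that they cross.

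Since $C$ is a connected subgraph of $G$ that is disjoint from $Y$, it must lie inside a single component $C'$ of $G-Y$. I would then invoke the tightness of $C$, namely that $N(C) = X$: every vertex $x \in X$ has a neighbour in $C$, hence a neighbour in $C'$. Consequently each such $x$ lies in $C' \cup N(C') \subseteq C' \cup Y$, so $X \subseteq C' \cup Y$.

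By the definition of nestedness of separators given just before the lemma, this inclusion is precisely the statement that $X$ is nested with $Y$, contradicting the hypothesis that $X$ and $Y$ cross.

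There is no real obstacle here; the only thing to check is that the witness component $C'$ of $G-Y$ is the correct object to plug into the definition of $X \sim Y$, and this is immediate once one observes that $C \subseteq C'$ forces $N(C) \subseteq C' \cup Y$.
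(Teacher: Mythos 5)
Your proof is correct and follows essentially the same route as the paper's: suppose a tight component $C$ of $G-X$ avoids $Y$, observe that $C$ sits inside a single component $C'$ of $G-Y$, and use tightness ($N(C)=X$) to conclude $X\subseteq C'\cup Y$, i.e.\ $X\sim Y$, a contradiction. The paper phrases the last step a bit more tersely as $X = N(C)\subseteq C'\cup Y$, but the content is identical.
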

\begin{proof}
If $C$ is a tight component of $G-X$ such that $Y$ does not contain any vertex of~$C$ then $C$ is contained in some component $C'$ of $G-Y$.
However, then $X = N(C) \subseteq C'\cup Y$, i.e., $X$ is nested with $Y$ contradicting the assertion.
\end{proof}
\begin{LEM}\label{lem:crossing_min_sep}
If $X,Y\in \cA$ cross, then $Y$ contains a pair of vertices $v$ and $w$ such that $X$ is a $\subseteq$-minimal $v$--$w$-separator.
\end{LEM}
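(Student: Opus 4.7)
The plan is to exhibit the required vertices $v,w$ by combining the tightness of a witnessing separation for $X$ with \cref{lem:tight_meeting}. First I would pick a separation $(A,B)$ witnessing that $X\in\cA$, so $X$ efficiently distinguishes some two regular profiles in $\cP$. By \cref{lem:distinguisher_tight} this separation is tight, so there is a tight component $C_A$ of $G-X$ contained in $A\sm B$ and a tight component $C_B$ of $G-X$ contained in $B\sm A$; both satisfy $N(C_A)=N(C_B)=X$.

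Since $X$ and $Y$ cross, \cref{lem:tight_meeting} supplies vertices $v\in Y\cap C_A$ and $w\in Y\cap C_B$. As $C_A$ and $C_B$ are distinct components of $G-X$, the set $X$ separates $v$ from $w$ in $G$. It remains to check minimality: for every $x\in X$ I need to produce a $v$--$w$-path in $G$ avoiding $X\sm\{x\}$.

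For this last step I use that $N(C_A)=N(C_B)=X$, so each $x\in X$ has at least one neighbour in $C_A$ and at least one neighbour in $C_B$. Concatenating a $v$--$x$-path through $C_A\cup\{x\}$ (which exists since $C_A$ is connected and contains a neighbour of $x$) with an $x$--$w$-path through $\{x\}\cup C_B$ yields a $v$--$w$-path meeting $X$ only in $x$, so $X\sm\{x\}$ is not a $v$--$w$-separator. Hence $X$ is a $\subseteq$-minimal $v$--$w$-separator. I do not expect any real obstacle here; the lemma is essentially a direct consequence of the tightness statement \cref{lem:distinguisher_tight} together with \cref{lem:tight_meeting}, and the only minor care needed is in the path construction witnessing minimality.
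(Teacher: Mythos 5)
Your proof is correct and follows essentially the same route as the paper's: both invoke \cref{lem:distinguisher_tight} to get two tight components of $G-X$, apply \cref{lem:tight_meeting} to find $v,w\in Y$ in these components, and conclude minimality from tightness. You simply spell out the path construction establishing minimality, which the paper leaves implicit in the phrase ``as both $C_1$ and $C_2$ are tight components.''
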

\begin{proof}
There are at least two tight components $C_1,C_2$ of $G-X$ and $Y$ meets both of them by \cref{lem:tight_meeting}. Let $v$ be a vertex in $Y\cap C_1$ and $w$ a vertex in $ Y\cap C_2$. As both $C_1$ an $C_2$ are tight components, $X$ is indeed a $\subseteq$-minimal $v$-$w$-separator.
\end{proof}

We can now combine these with \cref{lem:finitelymany_seps} to prove \cref{property:fin_cn}.

\begin{LEM}\label{lem:fin_cn}
	For every pair of profiles $P,P' \in \cP$ every $X \in \cA_{P,P'}$ has finite k-crossing-number for all $k \le |P,P'|$.
\end{LEM}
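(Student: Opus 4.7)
The plan is to reduce the counting of crossing separators to counting minimal $v$--$w$-separators via \cref{lem:finitelymany_seps}, using the symmetry of the crossing relation on $\cA$ established earlier.

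Fix $P,P' \in \cP$ and $X \in \cA_{P,P'}$, and fix $k \le |P,P'|$. I need to bound the number of $Y \in \cA$ with $|Y|=k$ that cross $X$ and lie in some $\cA_{Q,Q'}$ with $|Q,Q'|=k$. First I would observe that by \cref{lem:nested_sym} the nestedness relation $\sim$ is symmetric on $\cA$, so crossing is symmetric on $\cA$ as well: whenever $X$ crosses such a $Y$, we also have that $X$ crosses $Y$ in the sense that $Y$ crosses $X$. This lets me apply \cref{lem:crossing_min_sep} with the roles of the two separators swapped compared to its statement.

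So, applying \cref{lem:crossing_min_sep} to the crossing pair with $X$ in the `$Y$-role', I obtain a pair of vertices $v,w \in X$ such that $Y$ is a $\subseteq$-minimal $v$--$w$-separator. Now the key point is that $X$ is fixed and finite (it has exactly $|P,P'|$ vertices), so the set of candidate pairs $\{v,w\} \subseteq X$ is finite. For each such pair, \cref{lem:finitelymany_seps} guarantees that there are only finitely many minimal $v$--$w$-separators of size at most $k$. Taking the (finite) union over all pairs $\{v,w\} \subseteq X$ yields a finite bound on the number of possible $Y$.

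There is really no substantial obstacle here: the conceptual work has all been done in \cref{lem:nested_sym} and \cref{lem:crossing_min_sep}. The only point that needs a bit of care is the direction in which \cref{lem:crossing_min_sep} is being applied, since its statement places the distinguishing vertices in the `other' separator; symmetry of crossing on $\cA$, which is a consequence of \cref{lem:nested_sym}, is precisely what lets me place them in the fixed $X$ instead.
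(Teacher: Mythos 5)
Your proposal is correct and follows essentially the same route as the paper: apply \cref{lem:crossing_min_sep} with the roles of $X$ and $Y$ swapped to place the pair of vertices inside the fixed, finite set $X$, and then invoke \cref{lem:finitelymany_seps}. The paper takes the symmetry of crossing on $\cA$ for granted and does not spell out the role-swap, whereas you make this explicit via \cref{lem:nested_sym}; that is a harmless extra bit of care, not a different argument.
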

\begin{proof}
	By \cref{lem:crossing_min_sep}, for every $Y\in \cA$ of size $k$ which crosses $X$, there are vertices $v,w \in X$ which are minimally separated by $Y$.
	However, there is only a finite number of pairs of vertices $v,w$ in $X$ and by \cref{lem:finitelymany_seps} every pair has only finitely many minimal separators of size $k$.
Therefore only finitely many such $Y\in \cA$ exist.
\end{proof}

The following lemmas show how the separators of corner separations behave under our new nestedness relation.
We will need these to prove \cref{property:fish,property:strong_submodular}.
Recall from \cref{sec:thinly} that a \emph{corner} of two separators $X,Y \in \cA$ is a separator $Z \in \cA$ which crosses only elements of $\cA$ which cross either $X$ or~$Y$. Note that this does not imply that $Z$ is nested with $X$ and $Y$.

\begin{LEM}\label{lem:fish_gsep}
	Let $X,Y \in \cA$ be a crossing pair of separators and let $(A_X, B_X)$ and $(A_Y, B_Y)$, respectively, be separations which witness that these are in $\cA$.
        Then for every $Z \in \cA$ which is nested with both $X$ and $Y$ there is a component $C_Z$ of $ G - Z $, such that $X\cup Y \sub C_Z \cup Z$.
	In particular $(A_X \cup A_Y) \cap (B_X \cap B_Y)$ , the separator of $(A_X, B_X) \join (A_Y, B_Y)$, is a corner of $X$~and~$Y$ provided that it lies in $\cA$.
\end{LEM}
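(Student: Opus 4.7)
The plan is first to establish the main statement -- that $X\cup Y$ lies on one side of any $Z$ nested with both -- and then derive the ``in particular'' from it by a short calculation on separators.

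For the main claim, I would start by applying \cref{lem:nested_sym} to the two nestedness hypotheses, obtaining components $C_X$ and $C_Y$ of $G-Z$ with $X\sub C_X\cup Z$ and $Y\sub C_Y\cup Z$; the goal then becomes to show $C_X=C_Y$, at which point we set $C_Z$ to be this common component. I would argue by contradiction, assuming $C_X\neq C_Y$ (so that $C_X\cap C_Y=\emptyset$). The key is to locate a tight component $D$ of $G-X$ on the ``opposite side'' of $X$ from $Z$. Since $(A_X,B_X)$ distinguishes a pair of regular profiles efficiently, \cref{lem:distinguisher_tight} guarantees tight components on both sides of $(A_X,B_X)$, and nestedness of $Z$ with $X$ pins $Z\sm X$ into a single component $C_Z^X$ of $G-X$; so a tight $D\neq C_Z^X$ exists and satisfies $D\cap Z=\emptyset$.

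Now, by \cref{lem:tight_meeting}, $Y$ hits $D$ in some vertex~$y$. Since $y\notin Z$ and $Y\sub C_Y\cup Z$, we must have $y\in C_Y$; and since $D$ is connected and disjoint from $Z$, in fact $D\sub C_Y$. Tightness of $D$ means $N(D)=X$, so every $x\in X$ has a neighbour in $C_Y$, forcing $X\sub C_Y\cup Z$. Intersecting this with $X\sub C_X\cup Z$ (using $C_X\cap C_Y=\emptyset$) yields $X\sub Z$. But then $X\sub Z\sub C_Z^Y\cup Y$ (where $C_Z^Y$ is the component of $G-Y$ containing $Z\sm Y$), witnessing nestedness of $X$ with $Y$ -- contradicting the assumption that $X$ and $Y$ cross.

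For the ``in particular'' part, let $W\coloneqq (A_X\cup A_Y)\cap(B_X\cap B_Y)$. A direct calculation gives $W\sub (A_X\cap B_X)\cup(A_Y\cap B_Y)=X\cup Y$. So, assuming $W\in\cA$, for any $Z'\in\cA$ nested with both $X$ and $Y$ the first part yields $W\sub X\cup Y\sub C_{Z'}\cup Z'$, and \cref{lem:nested_sym} then gives that $Z'$ is nested with $W$. Hence any $Z'\in\cA$ that crosses $W$ must cross $X$ or $Y$, which is exactly the definition of $W$ being a corner of $X$ and~$Y$. The main obstacle will be the first step -- specifically, combining tightness with \cref{lem:tight_meeting} to derive $X\sub Z$ from $C_X\neq C_Y$; after that, the final contradiction and the corner consequence are immediate.
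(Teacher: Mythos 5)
Your proposal is correct and relies on the same key ingredients as the paper's proof: \cref{lem:distinguisher_tight} to obtain a tight component $D$ of $G-X$ disjoint from $Z$, \cref{lem:tight_meeting} to show $Y$ meets $D$, connectedness of $D$ together with $N(D)=X$ to pull $X$ into the same $G-Z$-component as $Y\sm Z$, and the observation $(A_X\cup A_Y)\cap(B_X\cap B_Y)\sub X\cup Y$ for the corner claim. The only difference is organisational: the paper argues directly by producing a path from a vertex of $X\sm Z$ (obtained by a symmetric application of \cref{lem:tight_meeting} to a tight component of $G-Y$) to a vertex of $Y\sm Z$ avoiding $Z$, whereas you run the same connectivity argument by contradiction and handle the degenerate possibility $X\sub Z$ separately by observing it would force $X\sim Y$ via $Z$'s nestedness with $Y$; both routes are equally valid and of comparable length.
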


\begin{proof}
	We first show that $Z$ does not separate $X$ and $Y$.
    Since $Z$ is nested with $X$ and $X$ efficiently distinguishes two regular profiles there is, by \cref{lem:distinguisher_tight}, a tight component $C_X$ of $G-X$ which is disjoint from $Z$.
	By \cref{lem:tight_meeting}, there is a vertex $y \in C_X \cap Y \sub Y \sm Z$.

	By a symmetrical argument there also exists a vertex $x \in X \sm Z$. Since $C_X$ is tight there is a path from $x$ to $y$ contained in $C_X$ except for $x$. This path avoids $Z$.

	Now, since $Z$ is nested with $X$ there is a component $C_Z$ of $G-Z$ which contains $X \sm Z$.
In particular this component contains $x$.
Similarly, there is a component of $G-Z$ containing $Y \sm Z$ and hence, in particular,~$y$.
Since $Z$ does not separate $x$ and $y$ this component is the same as $C_Z$.
Therefore $X\cup Y\subseteq C_Z\cup Z$, as required. In particular, if $(A_X \cup A_Y) \cap (B_X \cap B_Y)\in \cA$ then $(A_X \cup A_Y) \cap (B_X \cap B_Y)\subseteq C_Z\cup Z$, hence $(A_X \cup A_Y) \cap (B_X \cap B_Y)\sim Z$ and  therefore $(A_X \cup A_Y) \cap (B_X \cap B_Y)$ is a corner of~$X$ and~$Y$.
\end{proof}

\begin{LEM}\label{lem:submodular}
	Let $X,Y \in \cA$ be a crossing pair of separators and let $(A_X, B_X)$ and $(A_Y, B_Y)$, respectively, be witnesses that these are in $\cA$.
If $Z \in \cA$ is nested with $X$, and each of the corner separations $(A_X,B_X)\join(A_Y,B_Y)$ and $(A_X,B_X)\meet (A_Y,B_Y)$ distinguishes some pair of profiles efficiently then $Z$ is nested with one of the separators $(A_X\cup A_Y)\cap (B_X\cap B_Y)$ or $(A_X\cap A_Y) \cap (B_X\cup B_Y)$.
\end{LEM}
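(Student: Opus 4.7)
The plan is to exploit the tightness of the two corner separations, which follows from the hypothesis via \cref{lem:distinguisher_tight}, together with the symmetry of the nestedness relation on $\cA$ from \cref{lem:nested_sym}. Since $Z\sim X$ forces $Z\setminus X$ into a single component of $G-X$, and any such component lies entirely on one side of $(A_X,B_X)$, we have either $Z\subseteq A_X$ or $Z\subseteq B_X$. I will show that in the first case $Z$ is nested with the join-separator $P_1 := (A_X\cup A_Y)\cap(B_X\cap B_Y)$; the case $Z\subseteq B_X$ is then handled by the fully symmetric argument using the meet and its separator~$P_2$.

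Assume $Z\subseteq A_X$. Since $P_1$ efficiently distinguishes some pair of profiles, the join $(A_X\cup A_Y,\,B_X\cap B_Y)$ is tight, so there is a tight component $E$ of $G-P_1$ inside $B_X\cap B_Y$; that is, $E\subseteq (B_X\cap B_Y)\setminus P_1$ with $N(E)=P_1$. Rewriting $P_1$ as $(X\cap B_Y)\cup(Y\cap B_X)$ and unpacking $(B_X\cap B_Y)\setminus P_1$ term by term shows that every vertex of this set already lies in $(B_X\setminus A_X)\cap (B_Y\setminus Y)$. In particular $E$ is disjoint from $A_X$ and hence from $Z$.

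From here the argument is formal, in the spirit of the proof of \cref{lem:nested_sym}: as $E$ is connected and avoids $Z$, it is contained in a single component $D$ of $G-Z$; and since $N(E)=P_1$, every $v\in P_1\setminus Z$ is joined to $E\subseteq D$ by an edge of $G-Z$ and therefore also belongs to $D$. Thus $P_1\subseteq D\cup Z$, which is exactly the statement $P_1\sim Z$. As $P_1$ and $Z$ both lie in $\cA$, the symmetry of the nestedness relation (\cref{lem:nested_sym}) upgrades this to $Z\sim P_1$, as required.

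The case $Z\subseteq B_X$ is verbatim the same argument applied to the meet $(A_X\cap A_Y,\,B_X\cup B_Y)$: one selects a tight component $E'$ of $G-P_2$ inside $A_X\cap A_Y$, checks that $E'\subseteq (A_X\setminus B_X)\cap (A_Y\setminus B_Y)$, hence $E'\cap Z=\emptyset$, and then repeats the component-chasing to conclude $Z\sim P_2$. The main obstacle is really just the geometric containment $E\subseteq (B_X\setminus A_X)\cap (B_Y\setminus Y)$ (and its counterpart for $E'$): this is the step that pins the tight component to the ``far corner'' of the crossing, and thereby forces it to be disjoint from the side of $X$ that contains $Z$. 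Everything after that is the standard nested-via-tight-component bookkeeping already used elsewhere in this section.
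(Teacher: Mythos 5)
Your proof is correct and takes essentially the same route as the paper: reduce to $Z\subseteq A_X$ via the nestedness of $Z$ with $X$, apply \cref{lem:distinguisher_tight} to obtain a tight component of $G$ minus the join-separator lying in $(B_X\cap B_Y)\setminus(A_X\cup A_Y)$, observe that it avoids $Z$, and conclude nestedness. The only cosmetic difference is that you inline the component-chasing argument (which is exactly the content of \cref{lem:tight_meeting}) and then invoke \cref{lem:nested_sym} to flip the direction, whereas the paper cites \cref{lem:tight_meeting} directly.
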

\begin{proof}
Since $Z$ and $X$ are nested there is a component $C^Z$ of $G-X$ such that $Z\subseteq C^Z\cup X$.
Let us assume without loss of generality that $C^Z\subseteq A_X$, we will show that $Z$ is nested with $(A_X\cup A_Y)\cap(B_X\cap B_Y)$.

Since $(A_X,B_X)\join(A_Y,B_Y)$ efficiently distinguishes some regular profiles there is, by \cref{lem:distinguisher_tight}, a tight component of $(A_X\cup A_Y)\cap (B_X\cap B_Y)$ contained in $(B_X\cap B_Y)$. 
However $Z\subseteq A_X$, so this component cannot meet $Z$.
Hence, by \cref{lem:tight_meeting}, $Z$ cannot cross the separator~$(A_X\cup A_Y)\cap(B_X\cap B_Y)$.
\end{proof}
These now allow us to reuse \cref{lem:findcorner1,lem:findcorner2} to prove \cref{property:fish,property:strong_submodular}:

\begin{LEM}\label{lem:fish_exists}
If two separators $X \in \cA_{P,P'}$ and $Y \in \cA_{Q,Q'}$ cross and $|P,P'| < |Q,Q'|$
then there is a corner $Y' \in \cA_{Q,Q'}$ of $X$ and $Y$ which is nested with $X$.
\end{LEM}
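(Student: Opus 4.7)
The plan is to lift the question to the universe of all separations of $G$, apply our earlier work there, and then transfer back to separators.

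Pick separations $(A_X, B_X) \in \cA_{P,P'}$ and $(A_Y, B_Y) \in \cA_{Q,Q'}$ witnessing that $X$ and $Y$ lie in $\cA$, so that $A_X \cap B_X = X$ and $A_Y \cap B_Y = Y$. Since $|(A_X, B_X)| = |P,P'| < |Q,Q'| = |(A_Y, B_Y)|$, \cref{lem:findcorner1} yields a corner separation of $(A_X, B_X)$ and $(A_Y, B_Y)$ that again efficiently distinguishes $Q$ and $Q'$; inspecting its proof, this corner has the form $(A_X, B_X) \join (\cdot,\cdot)$ for one of the two orientations of~$Y$. After flipping $(A_Y, B_Y)$ with its inverse if necessary, we may assume this corner is $(A, B) := (A_X \cup A_Y,\, B_X \cap B_Y)$, whose separator $Y' := A \cap B$ lies in $\cA_{Q,Q'}$. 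That $Y'$ is a corner of $X$ and $Y$ with respect to the relation $\sim$ on $\cA$ is then exactly the ``in particular'' part of \cref{lem:fish_gsep}.

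It remains to verify that $Y'$ is nested with $X$. By \cref{lem:nested_sym} it suffices to exhibit a component $C$ of $G - Y'$ with $X \subseteq C \cup Y'$. Apply \cref{lem:distinguisher_tight} to the witness $(A_X, B_X)$ to obtain a tight component $C^A$ of $G - X$ contained in $A_X \sm B_X$ with $N(C^A) = X$. Because $A \supseteq A_X$ and $B \subseteq B_X$, both $C^A$ and every $x \in X \sm Y'$ lie in $A \sm B$, hence outside $Y'$: indeed, if $x \in X \sm Y'$ then $x \in A_X \subseteq A$ but $x \notin A \cap B = Y'$ forces $x \notin B$. Since $N(C^A) = X$, each such $x$ has a neighbor inside $C^A$, so $C^A \cup (X \sm Y')$ is connected in $G - Y'$ and lies in a single component $C$; this gives $X = (X \cap Y') \cup (X \sm Y') \subseteq Y' \cup C$, as required.

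The main obstacle is exactly this last step: controlling nestedness at the level of \emph{separators} rather than \emph{separations}. While \cref{lem:findcorner1} hands us a corner separation that is nested with $(A_X, B_X)$ in the separation universe, there is no a priori reason for its separator $Y'$ to be nested with $X$ under $\sim$, because $X \sm Y'$ could conceivably meet several components of $G - Y'$. The tightness guaranteed by \cref{lem:distinguisher_tight} — more precisely, the existence of the single tight component $C^A$ with $N(C^A) = X$ lying cleanly on the $A_X$-side — is what prevents any such splitting and glues $X \sm Y'$ into one component of~$G - Y'$.
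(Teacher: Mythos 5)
Your proof is correct and follows essentially the same route as the paper: apply \cref{lem:findcorner1} to a pair of witnesses, observe that the resulting corner separation's separator $Y'$ avoids a tight component of $G-X$ (using \cref{lem:distinguisher_tight}), and invoke \cref{lem:fish_gsep} for the corner property. The paper compresses the nestedness step to one line ($Y'$ misses a tight component of $G-X$, hence is nested with $X$, implicitly via \cref{lem:tight_meeting}), whereas you spell it out by constructing the relevant component of $G-Y'$ explicitly; both verifications are valid.
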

\begin{proof}
 Let $(A_X,B_X)$ be a separation witnessing that $X\in \cA_{P,P'}$ and let $(A_Y,B_Y)$ be a separation witnessing that $Y\in \cA_{Q,Q'}$.
By \cref{lem:findcorner1} there is a corner separation of $(A_X,B_X)$ and $(A_Y,B_Y)$ which also distinguishes $Q$ and $Q'$ efficiently.
The separator $Y'$ of this corner separation does not meet all tight components of $G-X$, so $Y'$ is nested with $X$ and thus is by \cref{lem:fish_gsep} as desired.
\end{proof}

\begin{LEM}\label{lem:strong_submodular}
If two separators $X \in \cA_{P,P'}$ and $Y \in \cA_{Q,Q'}$ cross and $|P,P'| = |Q,Q'| = k$
then either there is a corner $Y' \in \cA_{Q,Q'}$  of $X$ and $
Y$ which has a strictly lower $k$-crossing-number than $Y$,
or there is a corner $X' \in \cA_{P,P'}$ of $X$ and $Y$ which has strictly lower $k$-crossing-number than $X$.
\end{LEM}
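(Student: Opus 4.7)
The plan is to parallel the proof of \cref{lem:property3}, substituting \cref{lem:fish_gsep} and \cref{lem:submodular} for their separation-level analogues \cref{lem:fish} and \cref{lem:crossnumb_graph}, and invoking \cref{lem:findcorner2} on witness separations. For brevity I write $\mathrm{cn}(a)$ for the $k$-crossing number of an element $a\in\cA$.

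Without loss of generality I may assume $\mathrm{cn}(X)\le \mathrm{cn}(Y)$. Fixing witness separations $(A_X,B_X)$ of $X\in\cA_{P,P'}$ and $(A_Y,B_Y)$ of $Y\in\cA_{Q,Q'}$, \cref{lem:findcorner2} applied to these supplies a corner separation of $(A_X,B_X)$ and $(A_Y,B_Y)$ efficiently distinguishing $Q$ and $Q'$ whose opposite corner separation efficiently distinguishes either $P$ and $P'$ or $Q$ and $Q'$. Writing $Y'$ and $W$ for their respective separators, we obtain $Y'\in\cA_{Q,Q'}$ and $W\in\cA_{P,P'}\cup\cA_{Q,Q'}$, and by \cref{lem:fish_gsep} both $Y'$ and $W$ are corners of $X$ and $Y$ in the separator sense.

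The main step will be to establish the strict inequality
\[ \mathrm{cn}(Y')+\mathrm{cn}(W)\;<\;\mathrm{cn}(X)+\mathrm{cn}(Y)\,. \]
For the non-strict bound I combine \cref{lem:fish_gsep} (any $Z\in\cA$ nested with both $X$ and $Y$ is also nested with $Y'$ and with $W$) with \cref{lem:submodular} applied once to $X$ and once to $Y$ (anything crossing both $Y'$ and $W$ must cross both $X$ and $Y$), via inclusion-exclusion. For \emph{strictness} I need that $X$ and $Y$, which contribute to each other's $k$-crossing number, are each nested with both $Y'$ and $W$. This is the step I expect to require the most care, since the lattice relation between the witness separations and their joins or meets does not in general descend to the corresponding separators. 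I will establish it by hand using tightness from \cref{lem:distinguisher_tight}: a tight component of $G-X$ inside $A_X\sm B_X$ is disjoint from $Y'$ and hence lies in a single component $\hat C$ of $G-Y'$; since its neighbourhood in $G$ is all of $X$, every vertex of $X\sm Y'$ is adjacent to $\hat C$ and so lies in $\hat C$ itself, giving $X\subseteq \hat C\cup Y'$. The remaining three combinations of $X$ or $Y$ with $Y'$ or $W$ follow by symmetric arguments, using the tight component in the appropriate side of $G-X$ or $G-Y$.

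From the strict inequality a short case analysis finishes the proof: if $\mathrm{cn}(Y')<\mathrm{cn}(Y)$ then $Y'\in\cA_{Q,Q'}$ is the desired corner; otherwise $\mathrm{cn}(W)<\mathrm{cn}(X)\le\mathrm{cn}(Y)$, and $W$ supplies the required corner either in $\cA_{P,P'}$ as the sought $X'$, or in $\cA_{Q,Q'}$ with $k$-crossing number strictly below that of $Y$, depending on which set contains it.
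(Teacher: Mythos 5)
Your proof is correct and follows essentially the same route as the paper: apply \cref{lem:findcorner2} to witness separations, use \cref{lem:fish_gsep} and \cref{lem:submodular} to get the non-strict inequality of crossing numbers by inclusion--exclusion, and finish with the case analysis under the WLOG on which of $X,Y$ has the larger crossing number. The one place where you supply more detail than the paper is the step that $X$ and $Y$ are each nested with both corner separators; the paper simply invokes this as ``the fact that $Z$ and $Z'$ are nested with both $X$ and $Y$'', whereas your tightness argument (a tight component of $G-X$ lies on the side of $(A_X,B_X)$ that the corner separator avoids, hence witnesses nestedness as in \cref{lem:tight_meeting}) is exactly the right justification.
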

\begin{proof}
	By switching their roles if necessary we may assume that the $k$-crossing number of~$ Y$ is at most the $ k$-crossing number of~$ X $.  Let $(A_X,B_X)$ be a separation witnessing that $X\in \cA_{P,P'}$ and let $(A_Y,B_Y)$ be a separation witnessing that $Y\in \cA_{Q,Q'}$. By \cref{lem:findcorner2} there is a corner separation of $(A_X,B_X)$ and $(A_Y,B_Y)$ which efficiently distinguishes $P$ and $P'$ and whose opposite corner separation efficiently distinguishes either $P$ and $P'$ or $Q$ and $Q'$. Let us denote their separators as $Z$ and $Z'$ respectively.	
	
	By \cref{lem:fish_gsep,lem:submodular} and the fact that $Z$ and $Z'$ are nested with both $X$ and $Y$ we have that $Z$ and $Z'$ are corners of $X$ and $Y$ and that the sum of the $k$-crossing numbers of $Z$ and $Z'$ is strictly lower than the sum of the $k$-crossing numbers of $X$ and~$Y$.
	
	Thus, if the $k$-crossing number of $Z$ is strictly lower than the $ k$-crossing number of~$ X $, we can take $Z$ for $X'$. Otherwise we can infer that the $k $-crossing number of $Z'$ is strictly lower than that of~$Y$. Since we assumed in the beginning that the $ \abs{i} $-crossing number of~$Y$ is not greater than that of~$X $. This proves the claim since we can then take $Z'$ for $X'$ or $Y'$, depending.
\end{proof}

With this all the requirements of \cref{thm:thinly} are satisfied.
Immediately we obtain the main result of this section:

\thmNestedSeparators* \vspace{-.6\baselineskip}\hfill\qedsymbol

As noted before, to be able to deduce \cref{thm:Johannes} and \cite{carmesin2020canonical}*{Remark 8.3} we restrict our set $\cP$ to be a set of \emph{principal} robust profiles.
A $k$-profile $P$ in $G$ is \emph{principal} if it contains for every set $X$ of less than $k$ vertices a separation of the form $(V(G) \sm C, C \cup X)$ where $C$ is a connected component of $G-X$.
In particular, every principal profile is regular.
Note that this notion of principal profiles is equivalent to the notion of `profiles' in Carmesin's \cite{carmesinhalinconj}; the term \emph{principal profiles} comes from \cite{carmesin2020canonical}. Observe that in locally finite graphs an inductive application of the profile property \ref{property:P} shows that every profile is principal.

This restriction to principal profiles is necessary for \cref{thm:Johannes}, as Elm and Kurkofka~\cite{ElmKurkofkaInfTangles}*{Corollary~3.4} have shown that there is a graph together with a set of (non-principal but robust and distinguishable) profiles, which do not permit the existence of a nested set of separations distinguishing all of them.

\subsubsection{Nested sets of separations}\label{sub:johannes}
If we restrict $\cP$ to a set of principal profiles, the nested set of separators from \cref{cor:nested_separators} can be transformed into a nested set of \emph{separations} which still distinguishes all the profiles in $\cP$ if we give up on canonicity.
This task is not entirely trivial.

The natural approach would be to take for each separator every one of the separations belonging to one of its tight components, i.e. the separation $(C\cup X, V\sm C)$ for every tight component $C$ of $G-X$.
However, if the separators overlap the resulting set of separations might not be nested.
The following  recent result by Elm and Kurkofka states that we need to omit no more than one of the tight components for each separator to reclaim nestedness.
\begin{THM}[{\cite{ElmKurkofkaInfTangles}*{Corollary 6.1}}]\label{thm:j+a}
Suppose that $\cY$ is a principal collection of vertex sets in a connected graph $G$. Then there is a function $\cK$ assigning to each $X\in\cY$ a subset $\cK(X)\subseteq \cC_X$ (the set $\cC_X$ consists of the components of $G - X$ whose neighbourhoods are precisely equal to $X$) that misses at most one component from $\cC_X$, such that the collection \[\left\{\left\{V\sm K,X\cup K\right\}\mid X\in \cY\text{ and }K\in \cK(X)\right\}\]
is nested.
\end{THM}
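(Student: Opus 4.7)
The plan is to construct $\cK$ by omitting at most one distinguished tight component $C_X^\star$ per separator $X \in \cY$, where $C_X^\star$ is a component singled out by the principality of $\cY$. The excerpt does not reproduce the definition of a \emph{principal collection of vertex sets}, but the natural reading—paralleling the notion of a principal profile, which for each small $X$ picks a separation of the form $(V \sm C, C \cup X)$ and thereby designates a specific component $C$ of $G-X$—is that $\cY$ comes equipped with, or at any rate admits, a canonical choice $C_X^\star \in \cC_X$ for each $X$. I will therefore set $\cK(X) := \cC_X \sm \{C_X^\star\}$, which evidently misses at most one component, as required.

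The bulk of the work is then to verify nestedness of the resulting collection. For distinct $X, X' \in \cY$ and $K \in \cK(X)$, $K' \in \cK(X')$ I would analyse the relative position of the tight components $K, K'$ and the separators $X, X'$, following the pattern of \cref{lem:tight_separators} and using the inclusion-based nestedness criterion for separations of the form $\{V\sm K, X\cup K\}$. The decisive cases are the following. (a) If $K$ and $K'$ are disjoint, then $K'\subseteq V\sm X$, so connectivity forces $K'$ to lie inside a single component of $G - X$; this immediately produces one of the four nestedness inclusions, and symmetrically for $K$. (b) If $K\cap K'\neq\emptyset$, then since $N(K)=X$ and $N(K')=X'$, each of $X,X'$ must meet the opposite tight component (otherwise $K\cup K'$ would be a connected set whose neighbourhood contradicts the tightness of the containing one). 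A tightness-and-connectivity argument, very much in the spirit of the proof of \cref{lem:tight_separators}, then shows that in configuration (b) both $K$ and $K'$ are forced to be the distinguished components $C_X^\star$ and $C_{X'}^\star$—intuitively, only the "principal" component of one separator can be large enough to envelop points of another separator from $\cY$—and this is precisely the configuration we have excluded.

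The intra-$X$ case ($X = X'$ but $K\neq K'$) reduces to case (a) since distinct tight components of $G-X$ are disjoint and on opposite sides of the separation $\{V\sm K, X\cup K\}$. Combining (a) with the resolution of (b) yields that every pair of separations in the proposed family is nested, establishing the theorem.

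The main obstacle will be justifying case (b): namely, that non-principal tight components cannot "wrap around" another separator $X'\in\cY$ and thus cross the corresponding separations. This is where the precise content of \emph{principal} must be invoked—morally, a non-principal tight component is bounded and does not reach out far enough to entangle other separators, much as bounded profiles (\cref{lem:aleph_0-tangles}) behave differently from ultrafilter tangles. Pinning down this step without the formal definition of a principal collection at hand is the point of greatest uncertainty; depending on the exact formulation, it may require a compactness or rayless-style argument, or an appeal to the distinguished structure that principal profiles impose on $\cY$. The "at most one" budget per $X$ matches exactly the one canonical choice $C_X^\star$ that principality provides, which is encouraging evidence that this is indeed the right line of attack.
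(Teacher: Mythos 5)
First, a point of orientation: the paper does not prove \cref{thm:j+a} at all --- it is imported verbatim from Elm and Kurkofka \cite{ElmKurkofkaInfTangles}*{Corollary 6.1}, so there is no in-paper proof to compare yours against. The closest the paper comes is the ``direct proof of \cref{thm:Johannes}'', which carries out a related but strictly easier argument under the stronger hypothesis that the separators are pairwise \emph{nested} (as delivered by \cref{cor:nested_separators}), and which keeps \emph{every} tight component, instead attaching the non-tight components $\cD_C$ to suitable tight ones.

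Your proposal has a genuine gap at its foundation. A \emph{principal collection of vertex sets} is defined (in the paragraph following the theorem) as a set $\cY$ such that for every $X,Y\in\cY$ there is at most one component of $G-X$ met by $Y$; it is a pairwise condition, not a choice of a distinguished component $C_X^\star\in\cC_X$. Crucially, the unique component of $G-X$ met by $Y$ depends on $Y$: for different $Y,Y'\in\cY$ these components may differ, so a priori one might have to discard many components of $G-X$ --- one per offending $Y$ --- to destroy all crossings. The entire content of the theorem is that a \emph{single} omitted component per $X$ suffices against all other members of $\cY$ simultaneously; your construction $\cK(X):=\cC_X\sm\{C_X^\star\}$ presupposes exactly this, and your case (b) (``both $K$ and $K'$ are forced to be the distinguished components'') is the statement to be proved rather than a consequence of tightness. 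There is also a local error in case (a): from $K\cap K'=\emptyset$ you infer $K'\subseteq V\sm X$, but a component $K'$ of $G-X'$ can perfectly well meet the separator $X$ while avoiding the component $K$ of $G-X$, so none of the four nestedness inclusions follows as ``immediately'' as claimed. To repair the argument you would need the actual combinatorial core of Elm--Kurkofka's proof, which shows how the crossing pattern among the sets $\{V\sm K, X\cup K\}$ singles out at most one component of each $\cC_X$ to delete; this is not recoverable from tightness and connectivity alone.
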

Here, a principal collection of vertex sets is just a set $\cY$ of subsets of $V$ such that, for every $X,Y\in \cY$, there is at most one component of $G-X$ which is met by $Y$. In particular, any nested set of separators is a principal collection of vertex sets.

Having for every separator all but one of these tight component separations is still enough to efficiently distinguish all the profiles in $\cP$. However, as \cref{thm:j+a} does not give as a canonical choice for the function $\cK$, we need to give up the canonicity at this point. However, this still allows us to prove the following theorem by Carmesin:
\thmJohannes*
\begin{proof}If $G$ is not connected, then every robust principal profile of $G$ induces a robust principal profile on exactly one of the connected components of $G$. It is easy to see that we can then apply the theorem to all connected components from $G$ independently and obtain our desired nested set of separations of $G$ from those of the connected components together with separations of the form $(C,V\sm C)$ for connected components $C$ of $G$. Thus let us suppose that $G$ is connected.

    Let $N$ be the nested set of separations obtained by applying \cref{thm:j+a} to the set $\cN$ of separators obtained from \cref{cor:nested_separators}. Given any two profiles $P,Q \in \cP$ there is a separator $X$ in $\cN$ which efficiently distinguishes $P$ and $Q$. By \cref{lem:distinguisher_tight} there are two distinct tight components $C$ and $C'$ of $G-X$ such that both $(V\sm C, C\cup X)\in P$ and $(C'\cup X,V\sm C')\in P$ efficiently distinguish $P$ and $Q$. However, at least one of these two separations is an element of $N$.
\end{proof}

For the readers convenience, we also offer a direct proof of \cref{thm:Johannes} which does not use \cref{thm:j+a}. Instead we perform an argument akin to one of the arguments used in the proof of \cref{thm:j+a} but in slightly simpler form, as the statement we need is a weaker one than \cref{thm:j+a}.
\begin{proof}[Direct proof of \cref{thm:Johannes}] 
	Let $\cN$ be the nested set of separators obtained from \cref{cor:nested_separators}  applied to the set of robust principal profiles. Pick an enumeration of $\cN$ which is increasing in the size of the separators, i.e., an enumeration $\cN=\{X_\alpha \mid \alpha < \beta\}$ such that $|X_\alpha|\le |X_\gamma|$ whenever $\alpha < \gamma$.

    We will construct a transfinite ascending sequence of nested sets $(N_\gamma)_{\gamma \le \beta}$, of separations. Each $N_\gamma$ will contain only separations with separators in $\{X_\alpha \mid \alpha<\gamma\}$, and every pair of profiles efficiently distinguished by such a separator $X_\alpha$, $\alpha < \gamma$, will also be efficiently distinguished by some separation in $N_\gamma$.

For the successor steps of our construction suppose that we already constructed $N_\gamma$ and consider $X_\gamma$. Since $X_\gamma$ is nested with all $X_\alpha$ satisfying $\alpha < \gamma$ we know that $X_\gamma$ induces a consistent orientation of $N_\gamma$ since any separation $(A,B)\in N_\gamma$ satisfies either $X_\gamma \subseteq A$ or $X_\gamma \subseteq B$ but not both, as $|(A,B)|\le |X_\gamma|$.

\newcommand{\cD}{\mathcal{D}}
Consider the set $\cC$ of tight components of $G-X_\gamma$ and
let $\cD$ be the set of the remaining, non-tight, components of $G-X_\gamma$.

Given any separation $(A,B)\in N_\gamma$ pointing away from $X_\gamma$ (that is $X_\gamma \subseteq A$), the side $B$ is contained in the union of one component $C_B\in \cC$ together with some components in~$\cD$: Since $X_\gamma$ is nested with $A\cap B$ there is a component in $G-X_\gamma$ containing $(A\cap B) \sm X_\gamma$, thus, any other component $C$ of $G-X_\gamma$ meeting $B$ does not meet $A\cap B$ and must therefore satisfy $N(C)\subseteq A\cap B\cap X_\gamma$, i.e. this component is not tight.

Given a tight component $C\in \cC$ let $\cD_{C} \subseteq \cD$ be the set of all components $D$ in $\cD$ with the property that there is some $(A,B)\in N_\gamma$ pointing away from $X_\gamma$ such that $D$ meets $B$ and $C_B=C$.
Informally, these sets $\cD_{C}$ are the components which we will need to group together with their $C$ when choosing our next separations.
The $\cD_{C}$ are pairwise disjoint: Indeed, given two separations $(A,B)$ and $(A',B')$ pointing away from $X_\gamma$, if $(B',A')\le (A,B)$ then the set $B'$ and $B$ are disjoint, and if $(A,B)\le (A',B')$, then $(A'\cap B')\sm X_\gamma$ and $(A\cap B) \sm X_\gamma$ cannot be contained in different tight components of $G-X_\gamma$.

Let $N_{\gamma^+}$ consist of $N_\gamma$ together with, for every tight component $C\in\cC$ of $G-X_\gamma$, the separation $\Big(C\cup \bigcup \cD_{C}\cup X_\gamma,\, V(G) \sm \left(C \cup \bigcup\cD_{C}\right) \Big)$. It is easy to see that this set is a nested set of separations. Moreover, any pair of profiles efficiently distinguished by $X_\gamma$ is efficiently distinguished by one of these new separations.

For limit ordinals $\gamma$ let $N_\gamma \coloneqq\bigcup_{\alpha<\gamma}N_\alpha$, this set is nested since every pair in $N_\gamma$ is already in some $N_\alpha$.

Then $N \coloneqq N_\beta$ is the desired nested set of separations.
\end{proof}

\subsubsection{Canonical trees of tree-decompositions}\label{sub:totd}
To canonically and efficiently distinguish a robust set of principal profiles in a graph Carmesin, Hamann and Miraftab~\cite{carmesin2020canonical} introduced more complex objects than nested sets of separations: trees of tree-decompositions.
These consist of a rooted tree where every node is associated with a tree-decomposition. At the root this is a tree decomposition of $G$. At every remaining node there is a tree-decomposition of one of the torsos of the tree-decomposition at the parent node.
Their main result is the following:
\thmJMB*

We can also construct such a tree of tree-decompositions from our nested set of separators.
In order to do that, let us recall the most important definitions from~\cite{carmesin2020canonical}.

In a rooted tree $(T,r)$, the \emph{level} of a vertex $t\in V(T)$ is $d(t,r)+1$.
A \emph{tree of tree-decompositions} is a triple $((T, r), (G_t)_{t\in V(T)}, (T_t , \mathcal{V}_t)_{t\in V(T)})$ consisting of a rooted tree $(T, r)$, a family $(G_t)_{t\in V(T)}$ of graphs and a family $(T_t,\mathcal{V}_t)_{t\in V(T)}$ of tree-decompositions of the~$G_t$. The graphs $G_t'$ assigned to the neighbours $t'$ on the next level from a node $t\in V(T)$ shall be distinct torsos of the tree-decomposition $(T_t,\mathcal{V}_t )$.
This tree of tree-decompositions is a tree of tree-decompositions \emph{of $G$}, if $G_r=G$.

A separation $(A,B)$ of $G$ \emph{induces} a separation $(A',B')$ of $G_t$ if $A\cap G_t=A'$ and $B\cap G_t=B'$.
Given two profiles $P,P'$, we say that a tree of tree-decompositions (\emph{efficiently}) \emph{distinguishes} $P$ and $P'$ if there is a separation $(A,B)$ in $G$ (efficiently) distinguishing them and a node $t\in V(T)$ such that the separation induced by $(A,B)$ on~$G_t$ is one of the separation induced by the tree-decomposition $(T_t,\mathcal{V}_t)$ of~$G_t$.

In order to deduce \cref{thm:jmb} from \cref{cor:nested_separators} it is useful to observe that our set of separators is nested in an even stronger sense: We say that two separators $X$ and $Y$ are \emph{strongly nested} if there is a component $C$ of $G-X$ such that~${Y\subseteq C\cup N(C)}$ and there is a component $C'$ of $G-Y$ such that~${X\subseteq C'\cup N(C')}$. The separators from the nested set $\cN$ from \cref{cor:nested_separators} are strongly nested:
\begin{LEM}\label{lem:strong_nested}
 If $X$ and $Y$ are a pair of nested separators each of which efficiently distinguishes some pair of robust principal profiles, then they are strongly nested.
\end{LEM}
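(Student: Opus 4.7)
The plan is to bootstrap from the symmetric conclusion of~\cref{lem:nested_sym} — which already gives a component $C_0$ of $G-Y$ with $X\subseteq C_0\cup Y$ together with a component $C_0'$ of $G-X$ with $Y\subseteq C_0'\cup X$ — up to the stronger statement $X\subseteq C\cup N(C)$ and $Y\subseteq C'\cup N(C')$ for suitable components~$C,C'$. Since the vertices of $X\setminus Y$ automatically lie in $C_0$ (and $Y\setminus X$ in $C_0'$), the task reduces to showing that every $x\in X\cap Y$ has a neighbour in some common component $C$ of $G-Y$, and symmetrically for~$C'$.

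The key input is~\cref{lem:distinguisher_tight}: because $X$ efficiently distinguishes two regular profiles (principal profiles are in particular regular), its witness separation is tight, and so $G-X$ has at least two distinct tight components, each with neighbourhood exactly~$X$. The decisive observation is now that at most one tight component of $G-X$ can meet $Y$. Indeed, any component $D$ of $G-X$ that meets $Y$ must meet $Y\setminus X$ (since $D\cap X=\emptyset$), and $Y\setminus X\subseteq C_0'$ forces $D=C_0'$. Hence there is a tight component $D$ of $G-X$ entirely disjoint from~$Y$. Being connected and disjoint from $Y$, this $D$ lies inside a single component $C$ of $G-Y$. Since $N(D)=X$, every $x\in X$ has a neighbour in $D\subseteq C$, so $x\in C\cup N(C)$; this yields $X\subseteq C\cup N(C)$ as desired.

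A symmetric argument — exchanging the roles of $X$ and $Y$ and using the tightness of a witness for $Y$ — produces a tight component $E$ of $G-Y$ disjoint from $X$, locates it inside some component $C'$ of $G-X$, and then concludes from $N(E)=Y$ that $Y\subseteq C'\cup N(C')$. Combining the two inclusions gives strong nestedness. The only obstacle worth naming is the bookkeeping in the degenerate case $X\cap Y\neq\emptyset$ (in particular $X\subseteq Y$, when $C_0$ is underdetermined by the definition of nestedness): but the existence of \emph{two} tight components on each side is precisely the resource that ensures at least one avoids the overlap, which is exactly what the argument needs.
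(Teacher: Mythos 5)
Your proof is correct and rests on the same two structural pillars as the paper's: \cref{lem:distinguisher_tight} supplies two tight components of $G-X$ (resp.\ $G-Y$), and nestedness guarantees that at most one of them can meet the other separator, so at least one is entirely disjoint from it. Where you diverge is in how you exploit the surviving tight component. The paper proves the direction ``$Y\subseteq C\cup N(C)$ for some component $C$ of $G-X$'' by contradiction: it picks the component $C$ of $G-X$ that $Y$ meets, assumes some $w\in Y\cap (X\setminus N(C))$, and then uses the tight component $C_2$ of $G-Y$ avoiding $X$ as a ``highway'' to route a $v$--$w$ path through $C_2$ that the alleged separator $N(C)\subseteq X$ cannot cut, deriving the contradiction. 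You instead argue forwards: the tight component $D$ of $G-X$ disjoint from $Y$ is connected and thus sits inside a single component $C$ of $G-Y$, and the tightness equation $N(D)=X$ immediately forces every $x\in X$ to lie in $C$ or $N(C)$, giving $X\subseteq C\cup N(C)$ without any contradiction or path construction. This is a genuinely cleaner finish that trades the path argument for a one-line neighbourhood computation, and it also makes the degenerate case $Y\subseteq X$ painless, since then no component of $G-X$ meets $Y$ at all, so every tight component qualifies as $D$ — which removes the need for the paper's separate ``$Y\subseteq X$'' branch. Both proofs do only one of the two symmetric halves of strong nestedness explicitly and appeal to symmetry for the other, and both rely on the fact that each of $X$ and $Y$ individually distinguishes regular (hence, via tightness) profiles. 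Your appeal to \cref{lem:nested_sym} at the outset is a slight redundancy — it is used only to name $C_0$ and $C_0'$, which you could also extract directly from the definition of $\sim$ — but it does no harm. The only imprecision is the phrasing of the ``degenerate case'' aside: the relevant degeneracy is $Y\subseteq X$ (or $X\subseteq Y$) rather than $X\cap Y\neq\emptyset$ in general, and as you yourself note, it is in fact not an obstacle to the main argument.
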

\begin{proof} We show that there is a component $C$ of $G-X$ such that~${Y\subseteq C\cup N(C)}$.

If $Y\subseteq X$ the statement is obvious, by picking as $C$ a tight component of $G-X$.
So we may assume that $ Y $ meets some component $C$ of $G-X$ in a vertex $v \in Y \cap C$.
By nestedness $Y \subseteq C \cup X$.
Suppose for a contradiction that $Y \not\subseteq C \cup N(C)$, i.e., $Y$ contains a vertex $w\in X\sm N(C)$.

Since $Y$ efficiently distinguishes two principal profiles there are two distinct tight components $C_1,C_2$ of $G-Y$, by \cref{lem:distinguisher_tight}.
$X$ meets at most one of $C_1$ and $C_2$ since it is nested with $Y$; without loss of generality we may assume $X\cap C_2=\emptyset$.
Since $C_2$ is a tight component of $G-Y$ there is a path $P$ from $v$ to $w$ with all its interior vertices in $C_2$.
On the other hand $v$ lies in $C$ and $w$ outside of $C \cup N(C)$, so $N(C)$ separates $v$ from $w$. But $N(C) \sub X$ does not meet $P$ since $X\cap C_2=\emptyset$.
This is a contradiction.
\end{proof}
Note that for a separator $X$ to be strongly nested with itself is a non-trivial property: It is precisely the statement that there is a tight component of $G-X$. Thus, if we talk about a strongly nested set of separators, we mean that not only any pair of distinct separators from that set is strongly nested, we also require each of the separators from that set to be nested with itself.

Next we show that we can close our strongly nested set under taking subsets:
\begin{LEM}
 Let $\cN$ be a strongly nested set of separators and let $\cN'$ be the set of all subsets of elements of $\cN$. Then $\cN'$ is strongly nested as well.
\end{LEM}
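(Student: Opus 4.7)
The plan is to show that shrinking a separator $X \in \cN$ to a subset $X' \subseteq X$ preserves every tight component (in an enlarged form), and that this extension preserves all the containments required by the definition of strong nestedness.

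First, I would establish the following auxiliary observation: if $X' \subseteq X$ and $C$ is a tight component of $G - X$ (so $N(C) = X$), then the component $C^*$ of $G - X'$ containing $C$ is itself tight, i.e.\ $N(C^*) = X'$. The inclusion $N(C^*) \subseteq X'$ is automatic, while any $x' \in X' \subseteq X = N(C)$ has a neighbour in $C \subseteq C^*$ and itself lies outside $C^*$, so $x' \in N(C^*)$. As an immediate consequence, every $X' \in \cN'$ is strongly nested with itself: choose any $X \in \cN$ with $X' \subseteq X$, pick a tight component $C$ of $G - X$ (which exists by the self-strong-nestedness of $X$), and extend it to $C^*$.

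Next, for any two $X', Y' \in \cN'$ with $X' \subseteq X \in \cN$ and $Y' \subseteq Y \in \cN$, I would use the strong nestedness of $X$ and $Y$ in $\cN$ to obtain a component $C$ of $G - X$ with $Y \subseteq C \cup N(C) = C \cup X$, and let $C^*$ be the component of $G - X'$ containing $C$. I would then verify that $Y' \subseteq C^* \cup N(C^*)$ by case analysis on $y \in Y' \subseteq Y \subseteq C \cup X$: if $y \in C$ then $y \in C^*$; if $y \in X \setminus X'$ then $y$ lies in $G - X'$ and has a neighbour in $C \subseteq C^*$, hence $y \in C^*$; and if $y \in X'$ then $y \notin C^*$ but $y \in X = N(C)$ has a neighbour in $C \subseteq C^*$, so $y \in N(C^*)$. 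A symmetric argument exchanging the roles of $X$ and $Y$ yields a component $D^*$ of $G - Y'$ with $X' \subseteq D^* \cup N(D^*)$, proving that $X'$ and $Y'$ are strongly nested.

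I do not foresee a substantive obstacle here. The only point requiring care is to remember that strong nestedness also demands self-nestedness (the existence of a tight component), which is precisely what the first observation supplies; once that is in hand, the pairwise condition reduces to the short case analysis above, which stays entirely within the local structure around a single tight component.
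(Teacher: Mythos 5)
Your proof is correct and takes essentially the same route as the paper: fix the component $C$ of $G-X$ with $Y\subseteq C\cup N(C)$ guaranteed by strong nestedness, enlarge it to the component $C^*$ of $G-X'$ containing it, and observe that $Y'\subseteq C^*\cup N(C^*)$. The paper's version is more terse (it states "thus $Y'\subseteq C_{X'}\cup N(C_{X'})$" without unpacking it), while you spell out the case analysis explicitly; both also handle self-nestedness in the same unified way.

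One small imprecision worth flagging: you write $C\cup N(C)=C\cup X$, i.e.\ that $C$ is tight, but the definition of strong nestedness only gives $Y\subseteq C\cup N(C)$ and does not guarantee $N(C)=X$. Your case analysis nonetheless survives, because any $y\in Y'\cap(X\sm X')$ satisfies $y\notin C$ and $y\in Y\subseteq C\cup N(C)$, hence $y\in N(C)$ and so has a neighbour in $C\subseteq C^*$ — which is the fact you actually use. So the argument is sound; just replace the claimed equality by the correct containment $Y\subseteq C\cup N(C)$ and note that the middle case lands in $N(C)$ rather than all of $X$.
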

\begin{proof} Let $X, Y\in \cN$ and let $X'\subseteq X, Y' \subseteq Y$, possibly equal.
    Take $C_X$ to be a component of $G - X$ for which $Y \subseteq C_X \cup N(C_X)$, then in particular $Y' \subseteq C_X \cup N(C_X)$.
    Since $X' \subseteq X$ there is some component $C_{X'} \supseteq C_X$ of $G-X'$, thus $Y' \subseteq C_{X'} \cup N(C_X')$.

    By symmetry we also find a component $C_{Y'}$ such that $X' \subseteq C_{Y'} \cup N(C_{Y'})$
\end{proof}

So let $\cN'$ be the strongly nested set of all subsets of separators from $\cN$, the canonical nested set of separators from \cref{cor:nested_separators}. As such, $\cN'$ is canonical as well.
The following lemma about separations with strongly nested separators will allow us to construct a tree of tree-decompositions from $\cN'$ inductively, starting with the separators of lowest size.

\begin{LEM}\label{lem:notnested_strongnested}
 If $X,Y$ are distinct strongly nested separators and $(A_X,B_X)$ and $(A_Y,B_Y)$ are separations with separators $X$ and $Y$ respectively, such that $Y\subseteq B_X$, $X\subseteq B_Y$, then either $(A_X,B_X)$ and $(A_Y,B_Y)$ are nested, or there is a component $C$ of $G-(X\cap Y)$ which meets neither $X$ nor $Y$.
\end{LEM}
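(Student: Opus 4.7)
The plan is to focus on the single set $K := (A_X \setminus X) \cap (A_Y \setminus Y)$ and show that the two possibilities in the conclusion correspond exactly to the dichotomy $K = \emptyset$ versus $K \neq \emptyset$.

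First I would unpack what the hypotheses $Y \subseteq B_X$ and $X \subseteq B_Y$ tell us about how $X$ and $Y$ sit relative to the two separations. From $Y \subseteq B_X$ we get $A_X \cap Y \subseteq A_X \cap B_X = X$, hence $A_X \cap Y \subseteq X \cap Y$ and in particular $A_X \setminus X$ is disjoint from~$Y$. Symmetrically $A_Y \cap X \subseteq X \cap Y$ and $A_Y \setminus Y$ is disjoint from~$X$. Together with the decomposition $A_X \cap A_Y = K \cup (A_X \cap Y) \cup (X \cap A_Y) \cup (X \cap Y)$, the above yields the key identity
\[ A_X \cap A_Y \;=\; K \cup (X \cap Y). \]

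Suppose first that $K = \emptyset$. Then $A_X \setminus X$ is disjoint from both $Y$ and $A_Y \setminus Y$, so $A_X \setminus X \subseteq V \setminus A_Y \subseteq B_Y$. Combined with the assumption $X \subseteq B_Y$, this gives $A_X \subseteq B_Y$. A symmetric argument yields $A_Y \subseteq B_X$, so $(A_X, B_X) \le (B_Y, A_Y)$ and the two separations are nested.

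Now suppose instead that $K \neq \emptyset$; pick any $v \in K$ and let $D$ be the component of $G - (X \cap Y)$ containing~$v$. Every neighbour of $v$ in $G$ lies in~$A_X$ (since $v \in A_X \setminus B_X$ and $(A_X, B_X)$ is a separation), and likewise in~$A_Y$, so all neighbours of $v$ in~$G$ lie in $A_X \cap A_Y = K \cup (X \cap Y)$. Consequently, every neighbour of $v$ in $G - (X \cap Y)$ lies in~$K$. Since $K$ has the same defining property at every one of its vertices, iterating this shows $D \subseteq K$, and by construction $K$ is disjoint from $X \cup Y$. So $D$ is the desired component meeting neither $X$ nor~$Y$.

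The main work is in correctly carrying out the set algebra that gives $A_X \cap A_Y = K \cup (X \cap Y)$; once that is in hand, both cases are immediate. Note that the proof uses only the orientation hypotheses $X \subseteq B_Y$ and $Y \subseteq B_X$ and not the strong nestedness of $X$ and $Y$ itself — strong nestedness is the condition that allows one to pick the separations $(A_X, B_X)$ and $(A_Y, B_Y)$ with these side-conditions in the first place.
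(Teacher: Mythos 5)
Your proof is correct and follows essentially the same approach as the paper's: locate a vertex $v$ in $(A_X\setminus B_X)\cap(A_Y\setminus B_Y)$ (your $K$) when the separations are not nested, take the component of $v$ after removing $X\cap Y$, and show via the separation property that this component never touches $X\cup Y$. Your $K$-based set algebra packages the two directions a bit more symmetrically, and your side remark that strong nestedness is not actually used in the argument is an accurate observation, but the underlying argument matches the paper's.
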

\begin{proof}
 Suppose that $(A_Y,B_Y)\not\le (B_X,A_X)$. Then either there is a vertex in $A_Y$ which does not lie in $B_X$, or there is a vertex in $A_X$ which does not lie in $B_Y$. Since $A_X\cap B_X = X \subseteq B_Y$ and $A_Y\cap B_Y=Y\subseteq B_X$ either of these cases implies that there is a vertex $v$ in $(A_X\sm B_X)\cap (A_Y\sm B_Y)$. This vertex $v$ needs to lie in some component $C$ of $G-(X\cup Y)$. However, $C$ cannot send an edge to $X\sm Y$ since such an edge would contradict the fact that $(A_Y,B_Y)$ is a separation. Similarly, $C$ cannot be adjacent to any vertex of $Y\sm X$. Thus $C$ is in fact a component of $G-(X\cap Y)$ which meets either $X$ nor~$Y$.
\end{proof}

Now we are ready to deduce \cref{thm:jmb} from \cref{cor:nested_separators}:
\begin{proof}[Proof of \cref{thm:jmb}]
 Let $\cN'$ be as above.
 We will build our tree of tree-decompositions inductively level-by-level, adding at stage $k$ to every node $t$ on level $k-1$ new neighbours on level $k$, one for every torso of the tree-decompositions $(T_t,\cV_t)$.
 We do this in a way that ensures the following properties:
 \begin{enumerate}[label=(\roman*)]
 \item If $d(r,t)=k$ then every separation in $(T_t , \mathcal{V}_t)$ has order $k+1$.\label{item:sep_size}
 \item Every separator in $\cN'$ of size at least $k+2$ is contained in exactly one of the torsos of $(T_t , \cV_t)$ whenever $d(t,r)\le k$.\label{item:large_seps}
\item If $d(t,r)=k$, every torso of $(T_t, \cV_t)$ meets at most one component of $G-X$ for every $X \in \cN'$ of size $\leq k$ with $X \subseteq V(G_t)$.\label{item:small_seps}
\end{enumerate}

Our inductive construction goes as follows:
For $k=0$ we consider the set $S_1$ which consists of, for every separator $X$ of size $1$ in $\cN'$ and every component $C$ of $G-X$, the separation $(C \cup X, V(G) \sm C)$, unless $C$ is the only component of $G-X$.

Observe that $S_1$ is a nested set of separations: any two separations with the same separator are nested by construction and for separations with distinct separators $X$ and $Y$ the separators are disjoint, so  $G - (X\cap Y) = G$ is connected and \cref{lem:notnested_strongnested} gives that the separations are nested.

Moreover every $\omega$-chain $(A_1,B_1) < (A_2,B_2) < \dots$ in $S_1$ has $\bigcap_{i\in\N} B_i = \emptyset$: We may assume without loss of generality that no two of these separations have the same separator since $S_1$ has no $3$-chain of separations with the same separator. On the other hand a path from a vertex in $\bigcap_{i\in \N} B_i$ to $A_1$ (which has finite length) would need to meet all the infinitely many disjoint separators $A_i\cap B_i$. 

Since $S_1$ contains no small separations by construction it is a regular tree set.
Thus by \cref{lem:TreesetTD} it induces a canonical tree-decomposition $(T_r , \cV_r)$ of $G_r=G$.
We assign this tree-decomposition to the root of our tree of tree-decompositions and shall now verify \cref{item:sep_size,item:large_seps,item:small_seps}.

Observe that this decomposition satisfies \cref{item:sep_size,item:small_seps} as we only used separators of size $1$ and every torso of $(T_t, \cV_t)$ meets at most one component of $G-X$ for every $X \in \cN'$ of size $\leq 1$ with $X \subseteq V(G_t)$. Moreover, \cref{item:large_seps} is also satisfied since every separator $X$ in $\cN'$ of size at least $2$ is nested with each of the separators used in $(T_r , \cV_r)$: Such a separator cannot be contained in two distinct torsos since then a separation with separator in $\cN'$ would separate them. Conversely, there is a torso which contains $X$: Otherwise consider a torso $V_t$ that contains as much of $X$ as possible and another torso $V_t'$ which contains a vertex in $X\sm V_t$. Then one of the edges on the path between $t$ and $t'$ in $T$ again corresponds to a separation which separates $X$. But this is not possible since the separators of these separations are in $\cN'$ and thus nested with $X$.

For the $k$-th step of our construction, for $k\ge1$, we attach at every node $t$ on level $k-1$ of our so-far constructed tree of tree-decompositions, for every torso $G'$ of $(T_t,\cV_t)$ a new node $t'$ (which then is at level $k$) with $G_{t'} \coloneqq G'$.
We the independently construct tree-decompositions for each of these torsos $G_{t'}$. For every torso we use all those separators from $\cN'$ which are of size $k+1$ and lie inside that torso. Note that \cref{item:large_seps} guarantees that every separator in $\cN'$ of size $k+1$ is contained in exactly one of the newly added torsos.

Given one torso $G_{t'}$ of the tree-decomposition $(T_t,\cV_t)$, we let $S_{k+1}$ be the set of all separations $(A,B)$ of $G_{t'}$ of order $k+1$ with separator in $\cN'$ and the property that $A\sm B$ is a component of $G-(A\cap B)$ but not the only one.

We claim that $S_{k+1}$ is a nested set of separations. Indeed, if two separations from $S_{k+1}$ with different separators $X$ and $Y$ were to cross then by \cref{lem:notnested_strongnested} there would be a component of $G_{t'}-(X\cap Y)$ avoiding $X$ and $Y$. However, $X\cap Y$ has size less than $k$, lies in $\cN'$ and $G_{t'}$ meets, by \cref{item:small_seps}, at most one component of $G-(X\cap Y)$. Hence if we take vertices $x$ and $y$ in $G_{t'}-(X\cap Y)$ we find a path $P$ between them in $G-(X\cap Y)$. But since $G_{t'}$ is obtained from $G$ by repeatedly building a torso, $P\cap G_{t'}$ needs to contain a path between $x$ and $y$ in $G_{t'}$. In particular, this path does not meet $X\cap Y$ and thus $G_{t'}-(X\cap Y)$ has only one component, in particular every component of $G_{t'}-(X\cap Y)$ meets $X$ and $Y$. 

Now consider an $\omega$-chain $(A_1,B_1) < (A_2,B_2) < \dots$ in $S_{k+1}$. We may assume without loss of generality that no two of these separations have the same separator, as in the case $k=0$. If $\bigcap_{i\in \N} B_i$ is non-empty then its neighbourhood $Z \coloneqq N_{G_{t'}}(\bigcap_{i\in\N}B_i)$ needs to be properly contained in some $A_l\cap B_l$: Every vertex in $Z$ needs to be contained in some $A_m\cap B_m$ and if such a vertex lies in $A_m\cap B_m$, then it also lies in $A_n\cap B_n$ for every $n\ge m$. In particular, if $|Z|\ge k+1$, there would be an $m$ such that $A_m\cap B_m\subseteq Z$ and thus $A_n\cap B_n=A_m\cap B_m\forall n\ge m$ contradicting the assumption that no two of the $(A_l,B_l)$ have the same separator. Hence $|Z|\le k$ and we can easily find an $l$ such that $Z\subsetneq A_l\cap B_l$.

But then again $G_{t'}$ would meet two distinct components of $G-Z$: one meeting  $\bigcap_{i\in \N} B_i$ and one meeting $A_l$. This however is not possible since $|Z|<|A_l\cap B_l|$ and $Z\in \cN'$.

By construction $S_{k+1}$ contains no small separations, it is thus a regular tree set,
so by \cref{lem:TreesetTD} the set $S_{k+1}$ induces a canonical tree-decomposition $(T_{t'},\mathcal{V}_{t'})$ of $G_{t'}$.
In this way we construct all the tree-decompositions for nodes at level $k$. We need to verify \cref{item:sep_size,item:large_seps,item:small_seps}.
\cref{item:sep_size} is obvious. For \cref{item:large_seps} we observe that every separator in $\cN'$ of size at least $k+2$ which was contained in $G_{t'}$ was nested with every separator of a separation in $S_{k+1}$ and is therefore contained in exactly one of the torsos of $(T_{t'},\mathcal{V}_{t'})$, by the same argument as in the case $k=0$.

For \cref{item:small_seps} we note that for separators $X$ of size $\le k$ every torso of  $(T_{t'}, \cV_{t'})$ meets at most one component of $G-X$ as, by induction $G_{t'}$ itself only meets one component of $G-X$.
For a separator $X$ of size $k+1$ let $H$ be a torso of $(T_{t'}, \cV_{t'})$.
Firstly, $H$ meets at most one component of $G_{t'} - X$ since if $G_{t'}-X$ has more than one component then $X$ is one of the separators of $(T_{t'}, \cV_{t'})$ and therefore, as $S_{k+1}$ includes every separation of the form $(C\cup X,G_{t'}\sm X)$ for any component $C$ of $G_{t'}-X$, there needs to be a component $C$ of $G_{t'}-X$ such that $H$ is contained in $C\cup X$ .

Secondly, when building the torso $G_{t'}$ from $G$ we never add edges between distinct components of $G-X$ since we only add edges inside of separators in $\cN'$, which are nested with $X$. Hence, if $H$ would meet two components of $G-X$ it would also meet two component of $G_{t'}-X$. Hence $H$ meets at most one component of $G - X$. This gives~\cref{item:small_seps}.

\paragraph{Correctness}
Let us now verify that the so constructed tree of tree-decompositions $((T, r), (G_t)_{t\in V(t)}, (T_t , \mathcal{V}_t)_{t\in V(T)})$ -- which is canonical by construction -- has the properties \cref{i:jmb1,i:jmb2,i:jmb3} from the assertion.
The properties \cref{i:jmb2} and \cref{i:jmb3} are fulfilled by construction, so we only need to verify \cref{i:jmb1}.

Let $P,P'$ be two robust principal profiles from $\cP$. By \cref{cor:nested_separators}, $\cN'$ contains some separator $X$ which belongs to a separation efficiently distinguishing $P$ and $P'$, say $|X|=k$. By our inductive construction, there is a unique $G_t$ at level $k$ which contains $X$. As $P$ and $P'$ are principal profiles, there are two distinct components $C,C'$ of $G-X$ such that $(V(G)\sm C, C\cup X)\in P$, and $(V(G)\sm C', C'\cup X)\in P'$. We claim that $C \cap V(G_t)$ is not empty.

Note that $G_t$ is obtained from $G$ by repeatedly taking some separation $(A,B)$ of order $<k$ with $X \sub B$, deleting $A\sm B$ and making $A \cap B$ complete.
If we apply this operation for a single $(A,B)$ which, say, turns some graph $H$ with $V(H) \subseteq V(G)$ into $H'$ then this preserves for $H'$ the properties of $H$ that (i) $H[C \cap V(H)]$ is connected and (ii) every vertex in $X$ has, in $H$, a neighbour in $C \cap V(H)$. 
Thus every vertex in $X$ has, in $G_t$, a neighbour in $C \cap V(G_t)$ proving that $C \cap V(G_t)$ is non-empty.

By a symmetrical argument not only $C$ but also $C'$ meets some component of $G_t - X$.
Moreover, no two distinct components of $G-X$ can meet the same component of $G_t-X$: This would require an edge between these components, which would have to be added by the torso operation -- but this operation only adds edges inside a separator $Y\in \cN'$. And since $Y$ is nested with $X$, that is $Y$ meets only one component of $G-X$, this cannot add edges between different components of $G-X$.

Thus there is exactly one component $C_t$ of $G_t-X$ such that $C_t\subseteq C$ and this component is not the only one from $G_t-X$. So, by construction the separation $(C_t\cup X, G_t\sm X)$, which efficiently distinguishes the induced profiles of $P$ and $P'$ onto $G_t$ is induced by~$(T_t , \mathcal{V}_t)$.
\end{proof}

\bibliography{collective}

\vspace{1cm}
\noindent
\begin{minipage}{\linewidth}
 \raggedright\small
   \textbf{Christian Elbracht},
   \texttt{christian.elbracht@uni-hamburg.de}

   \textbf{Jakob Kneip},
   \texttt{jakob.kneip@uni-hamburg.de}
   
   \textbf{Maximilian Teegen},
   \texttt{maximilian.teegen@uni-hamburg.de}

   Universit\"at Hamburg,
   Bundesstra\ss{}e 55,
   20146 Hamburg, Germany
\end{minipage}
\end{document}